\long\def\comment#1\endcomment{}
\theoremstyle{plain}
\newtheorem{theorem}{\sc Theorem}[section]
\newtheorem{lemma}[theorem]{\sc Lemma}
\newtheorem{prop}[theorem]{\sc Proposition}
\theoremstyle{plain}
\newtheorem{defn}[theorem]{\sc Definition}
\theoremstyle{exercise}
\newtheorem{example}[theorem]{\sc Example}
\makeatletter \@addtoreset{equation}{section} \makeatother
\def\eqref#1{\thetag{\ref{#1}}}
\let\latexref=\ref
\def\ref#1{{\normalfont{\latexref{#1}}}}
\newcommand{\ldot}{{\:\raisebox{2,3pt}{\text{\circle*{1.5}}}}}
\newcommand{\udot}{{\:\raisebox{3pt}{\text{\circle*{1.5}}}}}
\def\dlim_#1{{\displaystyle\lim_{#1}}^\hdot}
\newcommand{\Ker}{\operatorname{{\rm Ker}}}
\newcommand{\id}{\operatorname{\rm id}}
\newcommand{\Ob}{\mathrm{Ob}}
\renewcommand{\Bar}{\mathrm{Bar}}
\newcommand{\Cobar}{\mathrm{Cobar}}
\newcommand{\Hom}{\mathrm{Hom}}
\newcommand{\dg}{{dg}}
\newcommand{\Ho}{\mathrm{Ho}}
\newcommand{\Cone}{\mathrm{Cone}}
\newcommand{\Id}{\mathrm{Id}}
\newcommand{\pretr}{\mathrm{pre-tr}}
\newcommand{\Vect}{\mathscr{V}ect}
\newcommand{\Cat}{{\mathscr{C}at}}
\renewcommand{\k}{\Bbbk}
\newcommand{\Assoc}{{{\mathcal{A}ssoc}}}
\newcommand{\pprime}{{\prime\prime}}
\newcommand{\Eq}{\mathrm{Eq}}
\newcommand{\Coeq}{\mathrm{Coeq}}
\newcommand{\Graphs}{{\mathscr{G}raphs}}
\newcommand{\dgwu}{{dgwu}}
\newcommand{\dgu}{{dg}}
\newcommand{\Fib}{\mathrm{Fib}}
\newcommand{\Surj}{\mathrm{Surj}}
\newcommand{\Fin}{{\mathcal{F}in}}
\newcommand{\inj}{\mathrm{inj}}
\newcommand{\cell}{\mathrm{cell}}
\newcommand{\cof}{\mathrm{cof}}
\renewcommand{\u}{u}
\newcommand{\sevafigc}[4]{\begin{figure}[h]\centerline{
 \epsfig{file=#1,width=#2,angle=#3}}
\bigskip\caption{#4}\end{figure}}
\title{\sc{A Quillen model structure on the category of Kontsevich-Soibelman weakly unital dg categories}}
\author{\sc{Piergiorgio Panero and Boris Shoikhet}}
\date{}
\begin{document}\maketitle
{\footnotesize
\begin{center}{\parbox{4,5in}{{\sc Abstract.}
In this paper, we study {\it weakly unital} dg categories as they were defined  by Kontsevich and Soibelman [KS, Sect.4]. 
We construct a cofibrantly generated Quillen model structure on the category $\Cat_{\dgwu}(\k)$ of small weakly unital dg categories over a field $\k$. Our model structure can be thought of as an extension of the model structure on the category $\Cat_\dg(\k)$ of (strictly unital) small dg categories over $\k$, due to Tabuada [Tab]. More precisely, we show that the imbedding of $\Cat_\dg(\k)$ to $\Cat_{dgwu}(\k)$ is a right adjoint of a Quillen pair of functors. We prove that this Quillen pair is, in turn, a Quillen equivalence. In course of the proof, we study a non-symmetric dg operad $\mathcal{O}$, governing the weakly unital dg categories, which is encoded in the Kontsevich-Soibelman definition. We prove that this dg operad is quasi-isomorphic to the operad $\Assoc_+$ of unital associative algebras.

}}
\end{center}
}
\section*{\sc Introduction}
\subsection{\sc}
Weakly unital $A_\infty$ categories firstly appeared in the definition of Fukaya category in Homological mirror symmetry [K2]. 
Since that, weakly unital dg and $A_\infty$ categories have been studied by many authors, e.g. [LyMa], [Ly], [LH], [KS], [COS] among the others. 
Currently there are known three different definitions of a weakly unital $A_\infty$ (or dg) category [LyMa]. These three definitions are due to Fukaya, to Lyubashenko, and to Kontsevich-Soibelman, correspondingly. It was proven loc.cit. that the three definitions are equivalent, which means that if a given $A_\infty$ category is weakly unital in one sense it is also weakly unital in another. Nevertheless, the three categories of weakly unital $A_\infty$ categories are not equivalent. Their homotopy categories were supposed to be equivalent, and equivalent to the homotopy category of strictly unital dg categories. Our Theorem \ref{theorq2} confirms this claim for the Kontsevich-Soibelman definition.\footnote{Theorem 2.2 of [COS] confirms this claim for the Lyubashenko definition. The Lyubashenko weakly unital dg categories seemingly do not admit a closed model structure, and the proof in loc.cit. is direct. }.

The Kontsevich-Soibelman definition is, in authors' opinion, the most manageable. If one restricts to dg categories, the category $\Cat_\dgwu(\k)$ of small Kontsevich-Soibelman weakly unital dg categories over a field $\k$ admits all small limits and colimits (Theorem \ref{cococo}). Our main results show that there is a closed model structure on $\Cat_\dgwu(\k)$, extending the Tabuada closed model structure [Tab] on the category $\Cat_\dg(\k)$ of small unital dg categories over $\k$, and that the two closed model categories $\Cat_\dg(\k)$ and $\Cat_\dgwu(\k)$ are Quillen equivalent (Theorem \ref{theoremmain1} and Theorem \ref{theorq2}). 
\subsection{\sc }
Weakly unital dg categories emerge as well in some elementary algebraic constructions. Thus, let $A$ be a strictly unital dg algebra over $\k$. 
Then its bar-cobar resolution $\Cobar(\Bar(A))$ is a very nice ``cofibrant resolution'' of $A$. It is only true if it is considered as a non-unital dg algebra, because $\Cobar(\Bar(A))$ lacks a strict unit. In fact, $\Cobar(\Bar(A))$ is Kontsevich-Soibelman weakly unital, see Example \ref{barcobar1}. 

On the other hand, the bar-cobar resolution is a very natural resolution and one likes to consider it as a cofibrant replacement of $A$, when one computes $\Hom$ sets in the homotopy category. Certainly, $\Hom(\Cobar(\Bar(A)),B)$ in the non-unital setting is the set of {\it all} $A_\infty$ maps (or $A_\infty$ functors, for the case of dg categories). However, it is well-known [LH] that the correct $\Hom$ set in the homotopy category is defined via the {\it unital} $A_\infty$ maps (corresp., unital $A_\infty$ functors).\footnote{Recall that an $A_\infty$ map $F\colon A\to B$ is {\it unital} if $F_1(1_A)=1_B$ and $F_k(\dots,1_A,\dots)=0$ for $k\ge 2$.} The reason is that one has to take $\Hom(\Cobar(\Bar(A)),B)$ in the category of (Kontsevich-Soibelman) {\it weakly unital} dg categories, see Definition \ref{defwu}, and it gives rise exactly to the unital $A_\infty$ functors $A\to B$, see Example \ref{barcobar2}. 

One of our goals is to develop a suitable categorical environment in which the mentioned facts fit naturally. Some other applications will appear in our next paper. 

\subsection{\sc }
Let us outline in more detail our main results and the organization of the paper. 

In Section 1, we recall the Kontsevich-Soibelman definition of weakly unital dg categories and of their morphisms, which gives rise to a category $\Cat_\dgwu(\k)$. After that, we prove that the category $\Cat_\dgwu(\k)$ admits all small limits and colimits. The products, the coproducts, and the equalizers are constructed directly. The coequalizers are less trivial, to define them we use technique of monads. We adapt some ideas of [Wo] and [Li], where enriched strictly unital case is treated. We construct a monad $T$ on the category of dg graphs and prove in Theorem \ref{wumonadic} that the categories of $T$-algebras and of weakly unital dg categories are equivalent. 
The coequalizers are constructed in Proposition \ref{propcoeqmain}. We also construct a non-symmetric dg operad $\mathcal{O}$ such that $\mathcal{O}$-algebras in dg graphs are exactly weakly unital dg categories.

In Section 2, we prove Theorem \ref{theoremmain1} which says that there is a cofibrantly generated closed model structure on $\Cat_\dgwu(\k)$. We construct sets of generating cofibrations $I$ and of generating acyclic cofibrations $J$ which are paralleled to those in [Tab]. There is a trick, employed in Lemma \ref{klemma}, with  the acyclic cofibration $\mathscr{A}\to \mathscr{K}$ where $\mathscr{K}$ is the Kontsevich dg category with two objects. Namely, we notice that, for any closed degree 0 morphism $\xi$ in a weakly unital dg category $C$, the replacement of $\xi$ by $\xi^\prime=1\cdot \xi\cdot 1$ does not affect the class $[\xi]\in H^0(C)$, and,
at the same time, $\xi^\prime$ satisfies $1\cdot \xi^\prime=\xi^\prime\cdot 1=\xi^\prime$.  It makes us possible to use Tabuada's acyclic cofibration $\mathscr{A}\to\mathscr{K}$ in the weakly unital case, without any adjustment. Another new and subtle place is Lemma \ref{lemmafibj}, which, even in the unital case, simplifies the argument. In the weakly unital case it provides, seemingly, the only possible way to prove Theorem \ref{theoremmain1}.

In Section 3, we provide an adjoint pair of functors 
$$
L\colon \Cat_\dgwu(\k)\rightleftarrows \Cat_\dg(\k): R
$$
and prove, in Proposition \ref{propqpair}, that it is a Quillen pair. Moreover, we show  in Theorem \ref{theorq2} that it is a Quillen equivalence, if the natural projection of dg operads $\mathcal{O}\to\Assoc_+$ is a quasi-isomorphism.

Finally, in Section 4 we prove Theorem \ref{theoremmop} which states that the natural projection $p\colon \mathcal{O}\to\Assoc_+$ is a quasi-isomorphism of dg operads. It completes the proof of Theorem \ref{theorq2}. The proof of Theorem \ref{theoremmop} goes by a quite tricky computation with spectral sequences. 

In Appendix A, we provide some detail to the proof of [Dr, Lemma 3.7], which we employ in the proof of Lemma \ref{lemmafibj}.

\subsection*{\sc}
\subsubsection*{\sc Acknowledgements}
We are thankful to Bernhard Keller for his encouragement. 
The work of both authors was partially supported by the FWO Research Project Nr. G060118N.
The work of B.Sh. was partially supported by the Russian Academic Excellence Project ‘5-100’.

\section{\sc Weakly unital dg categories}
\subsection{\sc The definition}
We adapt the definition of {\it weakly unital dg categories} given in [KS, Sect. 4], where a more general context of $A_\infty$ categories is considered.

\subsubsection{\sc }

Let $A$ be a (non-unital) dg category. Denote by $\k_A$ the unital dg category whose objects are $Ob(A)$, for any $X\in\Ob(A)$ $\k_A(X,X)=\k$, and $\k_A(X,Y)=0$ for $X\ne Y$. We denote by $1_X$ the unit element in $\k_A(X,X)$. By abuse of notations, we denote, for a non-unital dg category $A$, by $A\oplus \k_A$ the unital dg category having the same objects that $A$, and
$$
(A\oplus\k_A)(X,Y)=\begin{cases}
A(X,Y)&X\ne Y\\
A(X,X)\oplus \k_X&X=Y
\end{cases}
$$
One has a natural imbedding $i\colon A\to A\oplus \k_A$ sending $X$ to $X$, and $f\in A(X,X)$ to the pair $(f,0)\in (A\oplus\k_A)(X,X)$.

\begin{defn}\label{defwu}{\rm
A {\it weakly unital dg category} $A$ over $\k$ is a non-unital dg category $A$ over $\k$, with a distignuished element $\id_X\in A(X,X)^0$, for any object $X$ in $A$, such that $d(\id_X)=0$ and $\id_X\circ \id_X=\id_X$, subject to the following condition.
One requires that there exists an $A_\infty$ functor $p\colon A\oplus \k_A\to A$, which is the identity map on the objects, such that $p\circ i=\id_A$, and which fulfils the conditions:
$$
p_1(1_X)=\id_X, \ p_n(1_X,\dots,1_X)=0 \text{\ for $n\ge 2$}, \text{ for any }X\in\Ob(A)
$$
 
}
\end{defn}
\begin{example}\label{uwu}{\rm
Let $A$ be a strictly unital dg category. Define $p\colon A\oplus\k_A\to A$ as $p_1|_{A(X,Y)}=\id$, $p(1_X)=\id_X$, $p_n=0$ for $n\ge 2$. Then $p$ is a dg functor, and $p\circ i=\id$. It makes a strictly unital dg category a weakly unital dg category. 
}
\end{example}

\begin{lemma}\label{lemmatriv1}
Let $A$ be a weakly unital dg category. Then the homotopy category $H^0(A)$ is a strictly unital dg category. 
\end{lemma}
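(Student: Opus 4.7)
The plan is to exploit the $A_\infty$ functor $p\colon A\oplus\k_A\to A$ of Definition \ref{defwu} to show that, for any closed degree $0$ morphism $f\in A(X,Y)^0$, the differences $\id_Y\circ f-f$ and $f\circ\id_X-f$ are $d$-exact. Together with the given idempotency $\id_X\circ\id_X=\id_X$ and closedness $d(\id_X)=0$, this will identify $[\id_X]$ as a two-sided strict unit of $H^0(A)$.

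First I would observe that, since $A$ is a non-unital dg category, composition descends to an associative operation on $H^0(A)$, and $[\id_X]\in H^0(A)(X,X)$ is well-defined because $\id_X$ is $d$-closed. The nontrivial content is the two-sided unit axiom. The key is that $A\oplus\k_A$ is a genuine unital dg category with strict units $1_X$, and the second Taylor component $p_2$ will supply the homotopies we need.

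The main step is to write down the $A_\infty$ functor equation for $p$ at arity two. Regard $f\in A(X,Y)$ as an element of $(A\oplus\k_A)(X,Y)$ via $i$ and apply the quadratic relation of $p$ to the pair $(1_Y,f)$. Using $p\circ i=\id_A$ (so $p_1(f)=f$), the assumption $p_1(1_Y)=\id_Y$, $p_0=0$, and the vanishing of $d(1_Y)$ and $df$, all terms except two disappear and the relation collapses to
$$
d\bigl(p_2(1_Y,f)\bigr)=p_1(1_Y\cdot f)-p_1(1_Y)\circ p_1(f)=f-\id_Y\circ f,
$$
proving $[\id_Y\circ f]=[f]$ in $H^0(A)$. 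The symmetric computation on the pair $(f,1_X)$ yields $d(p_2(f,1_X))=f-f\circ\id_X$, hence $[f\circ\id_X]=[f]$. Well-definedness on cohomology classes (independence of the chosen representative $f$) then follows from the graded Leibniz rule in $A$.

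I do not expect any serious obstacle; the argument is a direct transcription of a single $A_\infty$ quadratic relation for $p$. The only bookkeeping concern is the Koszul sign convention, which is harmless here because the inputs $1_Y$ and $f$ both have degree zero and the ambient differentials vanish. Notably, the higher Taylor components $p_n$ with $n\ge 3$, and the vanishing conditions $p_n(1_X,\dots,1_X)=0$ for $n\ge 2$, play no role in the present statement; they encode finer homotopical data that will be used later in comparing weakly unital with strictly unital dg categories.
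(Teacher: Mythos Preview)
Your proof is correct and follows essentially the same approach as the paper. The paper's proof invokes the standard fact that the first Taylor component $p_1$ of an $A_\infty$ functor induces an algebra map $[p_1]$ on $H^0$, and then reads off $[\id_X]\cdot [f]=[p_1](1_X)\cdot [p_1](f)=[p_1](1_X\cdot f)=[f]$; your argument is simply the unpacked version of this, writing down explicitly the homotopy $p_2(1_Y,f)$ that witnesses multiplicativity of $p_1$ up to boundary (which is exactly the arity-two $A_\infty$ relation).
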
   
\begin{proof}
The map $[p_1]\colon H^0(A)\oplus \k_{H^0(A)}\to H^0(A)$, induced by the first Taylor component $p_1$ of the $A_\infty$ functor $p$, is a dg algebra map. One has $[p](1_X)=\id_X$ and $[p]\circ [i]=\id$. It follows that $\id_X\circ f=f\circ \id_X=f$, for any $f\in H^0(A)$. 
\end{proof}

\begin{example}\label{barcobar1}{\rm
Let $A$ be an associative dg algebra over $\k$, with a strict unit $1_A$. Consider $C=\Cobar_+(\Bar_+(A))$ where $\Bar_+(A)$ is the bar-complex of $A$, which is non-counital dg coalgebra (thus, $\Bar_+(A)=T(A[1])/\k$ as a graded space), and $\Cobar_+(B)$ is the non-unital dg algebra (as a graded space, $\Cobar_+(B)=T(B[-1])/\k$). It is well-known that the natural projection $\Cobar_+(\Bar_+(A))\to A$ is a quasi-isomorphism {\it of non-unital dg algebras}. We claim that $\Cobar_+(\Bar_+(A))$ is (almost) weakly unital, whose weak unit is $1_A\in \Cobar_+(\Bar_+(A))$. By ``almost'' we mean that for $p_n$ defined below it is not true that $p_n(1,1,\dots,1)=0$ for $n\ge 2$. (One can easily take a quotient by the corresponding acyclic ideal, or alternatively one can regard it as an object of the category $\Cat^\prime_\dgwu(\k)$ rather than an object of $\Cat_\dgwu(\k)$, see Section \ref{section112}). 

We use notations $\omega=a_1\otimes\dots\otimes a_\ell\in \Bar_+(A)$ for monomial bar-chains, and $c=\omega_1\boxtimes \omega_2\boxtimes\dots\boxtimes\omega_k$ for monomial elements in $\Cobar_+(\Bar_+(A))$. 

Define $p_n(x_1,\dots,x_n)$, where each $x_i$ is either $1$ or a monomial $c\in\Cobar_+(\Bar_+(A))$, as follows. 

(1): We set $p_n(x_1,\dots,x_n)$ to be 0 if for some $1\le i\le n-1$ both $x_i,x_{i+1}$ are elements in $\Cobar_+(\Bar_+(A))$. 
(2): Otherwise, let $x_i,\dots, x_{i+j+1}$ be a fragment of the sequence $x_1,\dots, x_n$ such that $x_i=\omega_1\boxtimes\dots\boxtimes \omega_a\in\Cobar_+(\Bar_+(A))$, $x_{i+1}=\dots=x_{i+j}=1$, $x_{i+j+1}=\omega^\prime_1\boxtimes\dots\boxtimes\omega_b^\prime\in\Cobar_+(\Bar_+(A))$. Then we replace the fragment $x_i,x_{i+1},\dots,x_{i+j+1}$ by the following element $
\gamma$ in $\Cobar_+(\Bar_+(A))$: 
$$
\gamma=\omega_1\boxtimes\dots\boxtimes \omega_{a-1}\boxtimes (\omega_a\otimes\underset{\text{$j$ factors $\id$}}{ \id\otimes \dots\otimes \id}\otimes \omega_1^\prime)\boxtimes \dots\boxtimes \omega_b^\prime
$$
(3): We perform such replacements succesively for all suitable fragment, and finally we get an element in $\Cobar_+(\Bar_+(A))$, of degree $\sum \deg x_i-n+1$. By definition, this element is $p_n(x_1,\dots,x_n)$. By a suitable fragment we mean either the case  considered above, when a group of succesive 1s is surrounded by elements of $\Cobar_+(\Bar_+(A))$ from both sides, or one of the two extreme case: if $x_1=1$, the leftmost $1,1,\dots,1,x_i$ is a suitable fragment, and similarly if $x_n=1$, the rightmost fragment $x_s,1,\dots,1$ is also suitable.

One easily checks that the constructed $\{p_n\}_{n\ge 1}$ defines an $A_\infty$ morphism $p\colon \Cobar_+(\Bar_+(A))\oplus \k 1\to \Cobar_+(\Bar_+(A))$ such that $p\circ i=\id$. 

The construction for the case of $\Cobar_+(\Bar_+(C))$, for $C$ a dg category, is similar. 
}
\end{example}

\subsubsection{\sc }\label{section112}
We endow the weakly unital dg categories with a category structure, as follows.

\begin{defn}\label{defwucat}{\rm
Let $C,D$ be weakly unital dg categories, denote by $i^C\colon C\to C\oplus\k_{C}$, $i^D\colon D\to D\oplus \k_D$ and by $p^C\colon C\oplus \k_{C}\to C$, $p^D\colon D\oplus\k_D\to D$ the corresponding functors (see Definition \ref{defwu}). {\it A weakly unital dg functor} $F\colon C\to D$ is defined as a dg functor of non-unital dg categories 
such that the diagram below commutes:
\begin{equation}\label{mapwu}
\xymatrix{
C\oplus\k_{C}\ar[r]^{F\oplus\id}\ar[d]_{p^C}&D\oplus\k_{D}\ar[d]^{p^D}\\
C\ar[r]^{F}&D
}
\end{equation}
Note that the upper horizontal map $F\oplus\id$ is automatically a dg functor of unital dg categories, and $p_1,p_2$ are $A_\infty$ maps.
Note that it follows that
\begin{equation}\label{eq9}
F(\id_X)=\id_{F(X)}
\end{equation}
for any $X\in\Ob(C)$.
}
\end{defn}
Denote by $\Cat_{dgwu}(\k)$ the category of small weakly unital dg categories over $\k$. 

Similarly we define a category $\Cat_\dgwu^\prime(\k)$. Its objects are defined as the objects of $\Cat_{\dgwu}(\k)$ but with dropped conditions $p_n(1,\dots,1)=0$ for $n\ge 2$ and $p(1)\cdot p(1)=p(1)$. The morphisms are defined as for the category $\Cat_\dgwu(\k)$. One sees that the weakly unital dg algebra $\Cobar_+(\Bar_+(A))$, constructed in Example \ref{barcobar1}, is an object of $\Cat^\prime_\dgwu(\k)$ (but is not an object of $\Cat_\dgwu(\k)$). 

Note that the commutativity of diagram \eqref{mapwu} implies
\begin{equation}\label{eq10}
F(p_n^C(f_1\otimes\dots\otimes f_n))=p_n^D(F(f_1)\otimes\dots\otimes F(f_n))
\end{equation}
for any $n$ morphisms $f_1,\dots,f_n$ in $C$.

\begin{lemma}\label{lemmatriv2}
Let $F\colon C\to D$ be a weakly unital dg functor between weakly unital dg categories. Then it defines a $\k$-linear functor 
$H^0(F)\colon H^0(C)\to H^0(D)$ of unital $\k$-linear categories. 
\end{lemma}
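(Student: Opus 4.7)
The plan is to first extract the underlying functor of non-unital $\k$-linear categories $H^0(F)$ from the fact that $F$ is a dg functor (without yet using weak unitality), and then to check that this functor is unital using diagram \eqref{mapwu}.

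For each pair $X, Y \in \Ob(C)$, the map $F \colon C(X,Y) \to D(F(X), F(Y))$ is a $\k$-linear chain map of degree zero, so it sends closed degree zero morphisms to closed degree zero morphisms and coboundaries to coboundaries. I would define $H^0(F)$ on objects by $X \mapsto F(X)$ and on morphisms by $[f] \mapsto [F(f)]$; well-definedness on cohomology classes is then immediate. The strict identity $F(f\circ g) = F(f)\circ F(g)$ at the chain level descends to $H^0(F)([f]\circ [g]) = H^0(F)([f]) \circ H^0(F)([g])$, so $H^0(F)$ is a $\k$-linear functor of non-unital $\k$-linear categories.

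For unitality, I would recall from the proof of Lemma \ref{lemmatriv1} that the unit in $H^0(C)(X,X)$ is the class $[\id_X]$, and similarly $[\id_{F(X)}]$ is the unit in $H^0(D)(F(X), F(X))$. By equation \eqref{eq9}, which was extracted directly from the commutativity of diagram \eqref{mapwu} applied to the element $1_X \in \k_C(X,X)$, one has $F(\id_X) = \id_{F(X)}$. Consequently
\[
H^0(F)([\id_X]) = [F(\id_X)] = [\id_{F(X)}],
\]
showing that $H^0(F)$ preserves units, so it is a functor of unital $\k$-linear categories.

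The hard part, if any, has already been carried out in deriving \eqref{eq9} from the definition of a weakly unital dg functor; once that identity is in hand, the statement reduces to a routine verification that a chain map between dg categories induces a functor on $H^0$, together with a single line comparing units. No use of the higher Taylor components $p_n$ for $n \ge 2$ or of the compatibility \eqref{eq10} is needed, since those data only affect $F$ at the $A_\infty$ level and vanish upon passing to cohomology.
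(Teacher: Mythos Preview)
Your proof is correct and spells out precisely the routine verification the paper has in mind; the paper itself simply writes ``It is clear'' and gives no further argument. The key observation is exactly the one you identify: equation \eqref{eq9} gives $F(\id_X)=\id_{F(X)}$, from which unitality of $H^0(F)$ is immediate once one knows (Lemma~\ref{lemmatriv1}) that $[\id_X]$ is the unit of $H^0(C)$.
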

It is clear.

\qed

\begin{example}\label{barcobar2}{\rm
Let $A$ be a strictly unital dg algebra, consider the weakly unital dg algebra $C=\Cobar_+(\Bar_+(A))$ (which belongs to $\Cat_\dgwu^\prime(\k))$), constructed in Example \ref{barcobar1}. Let $D$ be a strictly unital dg algebra. Then the set $\Hom_{\Cat_\dgwu^\prime(\k)}(C,D)$ is identified with the set of {\it unital} $A_\infty$ maps $A\to D$. (Recall that for strictly unital dg algebras $A,D$, an $A_\infty$ morphism $f\colon A\to D$ map is called {\it unital} if $f_1(1_A)=1_D$, and $f_n(a_1,\dots,a_n)=0$ if $n\ge 2$ and at least one argument $a_i=1_A$).

One has a similar description for the case of dg categories. 
}
\end{example}
\subsubsection{\sc The small (co)limits in $\Cat_{dgwu}(\k)$}\label{limcolimfirst}
It is true that the dg category $\Cat_{dgwu}(\k)$ is small complete and small cocomplete. 
One constructs directly small products and small coproducts. The equalizers are also straightforward, as follows.

Let $F,G\colon C\to D$ be two morphisms. Define $\Eq(F,G)$ as the dg category whose objects are 
$$
\Ob(\Eq(F,G))=\{X\in \Ob(C)| F(X)=G(X)\}
$$
Let $X,Y\in \Ob(\Eq(F,G))$. Define
$$
\Eq(F,G)(X,Y)=\{f\in C(X,Y)| F(f)=G(f)\}
$$
It is clear that $\Eq(F,G)$ is a non-unital dg category. For any $X\in \Ob(\Eq(F,G))$, $F(\id_X)=\id_{F(X)}$ and $G(\id_X)=\id_{G(X)}$, therefore $\id_X\in \Eq(F,G)(X,X)$.

One has to construct an $A_\infty$ functor $p\colon \Eq(F,G)\oplus\k_{\Eq(F,G)}\to\Eq(F,G)$ such that $p_1(1_X)=\id_X$, and $p\circ i=\id$. We define 
$$
p_n^{\Eq(F,G)}(f_1\otimes\dots\otimes f_n)=p_n^C(f_1\otimes\dots\otimes f_n)
$$
One has to check that $p_n^{\Eq(F,G)}(f_1\otimes\dots\otimes f_n)$ is a morphism in $\Eq(F,G)$, that is,
\begin{equation}\label{eq11}
F(p_n^{C}(f_1\otimes\dots\otimes f_n))=G(p_n^{C}(f_1\otimes\dots\otimes f_n))
\end{equation}
From \eqref{eq10} one gets
$$
F(p_n^C(f_1\otimes\dots\otimes f_n))=p_n^D(F(f_1)\otimes\dots F(f_n))
$$
and
$$
G(p_n^C(f_1\otimes\dots\otimes f_n))=p_n^D(G(f_1)\otimes\dots\otimes G(f_n))
$$
Now \eqref{eq11} follows from $F(f_i)=G(f_i)$ for all $f_i$, which holds because all $f_i$ are morphisms in $\Eq(F,G)$. Thus, $\Eq(F,G)$ is a weakly unital dg category.

\smallskip

To construct the coequalizers is a harder task. For the category $\mathscr{V}{-}\Cat$ of small $\mathscr{V}$-enriched categories, the coequalizers were constructed in [Li] and [Wo], assuming $\mathscr{V}$ to be a symmetric monoidal closed and cocomplete, and were constructed in [BCSW] and [KL] in weaker assumptions on $\mathscr{V}$. 
All these proofs rely on the theory of monads. We associate a monad which governs the weakly unital dg categories in Section \ref{subsectionmonadwu}. 

We adapt the approach of [Wo] for a proof of existence of the coequalizers in $\Cat_{dgwu}(\k)$. We also prove the corresponding monadicity theorem.

\subsection{\sc The monad of weakly unital dg categories}

\subsubsection{\sc Reminder on monads}
Here we recall definions and some general facts on monads and algebras over monads. The reader is referred to [ML], [R] for more detail.

Let $\mathscr{C}$ be a category. Recall that a monad in $\mathscr{C}$ is given by an endofunctor $$T\colon \mathscr{C}\to\mathscr{C}$$ 
and natural transformations 
$$
\eta\colon \Id\Rightarrow T\text{  and  }\mu\colon T^2\Rightarrow T
$$
so that the following diagrams commute:
$$
\xymatrix{
T^3\ar@2[r]^{T\mu}\ar@2[d]_{\mu T}&T^2\ar@2[d]^{\mu}\\
T^2\ar@2[r]^{\mu}&T
}\hspace{20mm}
\xymatrix{
T\ar@2[r]^{\eta T}\ar@2[rd]_{\Id} & T^2\ar@2[d]^{\mu} & T\ar@2[l]_{T\eta}\ar@2[dl]^{\Id}\\
&T
}
$$
A monad appears from a pair of adjoint functors. Assume we have an adjoint pair
\begin{equation}\label{mon1}
F\colon \mathscr{C}\rightleftarrows\mathscr{D}: U
\end{equation}
with adjunction unit and counit $\eta\colon \Id_{\mathscr{C}}\Rightarrow UF$ and $\varepsilon\colon FU\Rightarrow\Id_{\mathscr{D}}$.

It gives rise to a monad in $\mathscr{C}$, defined as:
$$
T=UF,\ \ \eta=\eta\colon \Id_{\mathscr{C}}\Rightarrow T,\ \ \mu=U\epsilon F\colon T^2\Rightarrow T
$$

An {\it algebra $A$ over a monad $T$} is given by an object $A\in\mathscr{C}$ equipped with a morphism $a\colon TA\to A$ such that the following diagrams commute:
$$
\xymatrix{
A\ar[r]^{\eta_A}\ar[rd]_{\Id_A}&TA\ar[d]^{a}\\
&A}\hspace{20mm}
\xymatrix{
T^2A\ar[r]^{\mu_A}\ar[d]_{Ta}&TA\ar[d]^{a}\\
TA\ar[r]^{a}&A
}
$$
The morphisms of algebras over a monad $T$ are defined as morphisms $f\colon A\to B$ in $\mathscr{C}$ such that the natural diagram commutes.

The category of $T$-algebras is denotes by $\mathscr{C}^T$.

There is an adjunction 
$$
F^T\colon \mathscr{C}\rightleftarrows \mathscr{C}^T\colon U^T
$$
which by its own gives rise to a monad. 

There is a functor $\Phi\colon\mathscr{D}\to\mathscr{C}^T$, sending an object $Y$ of $\mathscr{D}$ to the $T$-algebra $A=UY$, with 
$a\colon TA=UFUY\to UY=A$ equal to $U\varepsilon_Y$. The functor $\Phi$ is called the {\it Eilenberg-Moore comparison functor}.

An adjunction \eqref{mon1} is called {\it monadic} if the functor $\Phi\colon \mathscr{D}\to\mathscr{C}^T$ is an equivalence. 

There is a criterium when an adjunction is monadic, called the {\it Beck monadicity theorem}. We recall its statement below. 

Recall that a {\it split coequalizer} in a category is a diagram 
\vspace{3mm}
$$
\xymatrix{
A\ar@<0.5ex>[r]^{f}\ar@<-0.5ex>[r]_{g}&B\ar[r]^{h}\ar@/_1.5pc/[l]_s&C\ar@/_1.5pc/[l]_t
}
$$
such that 
\begin{itemize}
\item[(1)] $f\circ s=\id_B$,
\item[(2)] $g\circ s=t\circ h$,
\item[(3)] $h\circ t=\id_C$,
\item[(4)] $h\circ f=h\circ g$
\end{itemize}

Recall
\begin{lemma}\label{lemmaac}
A split coequalizer is a coequalizer, and is an absolute coequalizer (that is, is preserved by any functor).
\end{lemma}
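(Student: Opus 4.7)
The plan is to verify directly the universal property of the coequalizer, then observe that absoluteness is automatic. First I would check that $h$ does coequalize $f$ and $g$, which is exactly hypothesis (4). For universality, suppose $k\colon B\to D$ is any morphism with $k\circ f=k\circ g$. The only plausible candidate for a factorization is $u\defeq k\circ t\colon C\to D$, since $t$ is the only map available out of $C$. I would then compute
\[
u\circ h=k\circ t\circ h\stackrel{(2)}{=}k\circ g\circ s=k\circ f\circ s\stackrel{(1)}{=}k,
\]
where the middle equality uses $k\circ f=k\circ g$. For uniqueness, if $u'\circ h=k$ is any other factorization, then
\[
u'=u'\circ\id_C\stackrel{(3)}{=}u'\circ h\circ t=k\circ t=u.
\]
This establishes that $h$ is a coequalizer of $f$ and $g$.

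For the second assertion, the key observation is that the defining data (1)--(4) of a split coequalizer consist entirely of equalities between composites of morphisms and identity morphisms. Hence for any functor $\Phi\colon\mathscr{C}\to\mathscr{C}'$, applying $\Phi$ to the whole diagram yields another split coequalizer diagram (with splittings $\Phi(s)$ and $\Phi(t)$), since $\Phi$ preserves composition and identities. By the first part, $\Phi(h)$ is then the coequalizer of $\Phi(f)$ and $\Phi(g)$ in $\mathscr{C}'$. So the coequalizer is absolute.

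There is no real obstacle in the argument; it is a pure diagram chase. The only point requiring minor care is to use the four identities in the correct order when verifying $u\circ h=k$, namely applying (2) before exploiting $k\circ f=k\circ g$ and finally (1). The absoluteness statement is then a formal corollary, because the notion of ``split coequalizer'' is defined entirely by equational data that any functor automatically preserves.
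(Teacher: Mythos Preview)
Your proof is correct and follows exactly the standard approach the paper intends: it proves the coequalizer universal property directly from identities (1)--(4), then notes that these equational conditions are preserved by any functor, yielding absoluteness. The paper merely sketches this logic and defers the details to [R, Lemma 5.4.6], so your argument is a faithful spelling-out of what the paper leaves implicit.
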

It is enough to prove the first statement, because a split equalizer remains a split equalizer after application of any functor. See e.g. [R, Lemma 5.4.6] for detail.

\qed

Given a pair $$A\overset{f}{\underset{g}{\rightrightarrows}}B$$
in a category $\mathscr{D}$, and a functor $U\colon \mathscr{D}\to\mathscr{C}$, we say that this pair is {\it $U$-split} if the pair
$$
U(A)\overset{f}{\underset{g}{\rightrightarrows}}U(B)$$ in $\mathscr{C}$ can be extended to a split coequalizer.

\begin{theorem}\label{montheorem1}
Let $F\colon \mathscr{C}\rightleftarrows\mathscr{D}\colon U$ be a pair of adjoint functors, and let $T=UF$ be the corresponding monad. Consider the Eilenberg-MacLane comparison functor $\Phi\colon \mathscr{D}\to\mathscr{C}^T$. Then:
\begin{itemize}
\item[(1)] if $\mathscr{D}$ has coequalizers of all $U$-split pairs, the functor $\Phi$ has a left adjoint $\Psi\colon \mathscr{C}^T\to\mathscr{D}$,
\item[(2)] if, furthermore, $U$ preserves coequalizers of all $U$-split pairs, the unit $\Id_{\mathscr{C}^T}\Rightarrow \Phi\Psi$ is an isomorphism,
\item[(3)] if, furthermore, $U$ reflects isomorphisms (that is, $U(f)$ an isomorphism implies $f$ an isomorphism), the counit $\Psi\Phi\Rightarrow \Id_{D}$ is also an isomorphism.
\end{itemize}
Therefore, if (1)-(3) hold, $(U,F)$ is monadic. Conversely, if $(U,F)$ is monadic, conditions (1)-(3) hold.
\end{theorem}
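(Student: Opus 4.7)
The plan is to follow the classical Beck argument: for each $T$-algebra $(A,a)$, exhibit a canonical $U$-split presentation, use it to build $\Psi(A)$, and then trace conditions (2) and (3) through the resulting coequalizer to upgrade the adjunction $\Psi\dashv\Phi$ to an equivalence. Given $(A,a)$, I would consider the parallel pair
\[
FUFA \,\overset{Fa}{\underset{\varepsilon_{FA}}{\rightrightarrows}}\, FA
\]
in $\mathscr{D}$, where $\varepsilon$ is the counit of $F\dashv U$. Applying $U$ turns this into $(Ta,\mu_A)\colon T^2A\rightrightarrows TA$, which admits a split coequalizer to $A$ via the algebra structure map $a$, with splittings $s=\eta_{TA}\colon TA\to T^2A$ and $t=\eta_A\colon A\to TA$. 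The four split-coequalizer identities correspond respectively to the monad unit law $\mu_A\circ\eta_{TA}=\id_{TA}$, naturality of $\eta$ at $a$, the algebra unit axiom $a\circ\eta_A=\id_A$, and the algebra associativity $a\circ Ta=a\circ\mu_A$. Thus the pair in $\mathscr{D}$ is $U$-split, and hypothesis (1) produces a coequalizer $\Psi(A)\in\mathscr{D}$; functoriality in $(A,a)$ is automatic from the universal property.

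For the adjunction, given $Y\in\mathscr{D}$, maps $\Psi(A)\to Y$ correspond via the coequalizer to maps $FA\to Y$ coequalizing the canonical pair, and then by $F\dashv U$ to maps $f\colon A\to UY$ satisfying $f\circ a=U\varepsilon_Y\circ Tf$, which is precisely the definition of a $T$-algebra morphism $A\to\Phi(Y)$. For part (2), apply $U$ to the defining coequalizer of $\Psi(A)$: by hypothesis $U$ preserves it, so $U\Psi(A)$ is the coequalizer in $\mathscr{C}$ of the same pair, namely the split coequalizer with apex $A$ and coequalizer map $a$. The uniqueness of coequalizers identifies the unit $A\to U\Psi(A)$ with the canonical isomorphism.

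For part (3), the counit $\tilde\varepsilon_Y\colon \Psi\Phi(Y)\to Y$ is induced from $\varepsilon_Y\colon FUY\to Y$, which coequalizes the pair $FU\varepsilon_Y,\,\varepsilon_{FUY}\colon FUFUY\rightrightarrows FUY$ by naturality of $\varepsilon$ applied to the morphism $\varepsilon_Y$ itself. Applying $U$, both $U\tilde\varepsilon_Y$ and $U\varepsilon_Y$ are coequalizer maps for the same $U$-split pair in $\mathscr{C}$, so $U\tilde\varepsilon_Y$ is an isomorphism; hypothesis (3) then forces $\tilde\varepsilon_Y$ itself to be an isomorphism. The converse direction is formal: if $\Phi$ is already an equivalence then the three properties are pulled back from the Eilenberg-Moore adjunction $F^T\dashv U^T$, which satisfies them tautologically.

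The only step requiring genuine verification is the $U$-split presentation in the first paragraph: all four split-coequalizer identities must hold simultaneously, and this uses each axiom of the monad and of the $T$-algebra structure exactly once. After that, the remaining arguments are diagram chases driven by universal properties, with Lemma \ref{lemmaac} (split coequalizers are absolute) and the unit-counit triangle identities for $F\dashv U$ as the only technical inputs.
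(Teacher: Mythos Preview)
Your argument is the classical Beck proof and is correct; the split presentation and the adjunction computation are exactly as in the standard references. Note, however, that the paper does not give its own proof of this theorem: it is quoted as background and the reader is referred to [ML] and [R], so there is nothing to compare your approach against beyond observing that it matches the treatment in those sources.
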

The reader is referred to [ML] or  [R] for a proof.

\subsubsection{\sc Reminder on monads, II}
There is another monadicity theorem, which gives sufficient but not necessary conditions for $\Phi\colon \mathscr{D}\to\mathscr{C}^T$ to be monadic.

It uses {\it reflexive pairs} in $\mathscr{D}$ instead of $U$-split pairs. 

A pair of morphisms $f,g\colon A\to B$ in $\mathscr{D}$ is called {\it reflexive} if there is a morphism $h\colon B\to A$ which splits both $f$ and $g$: $f\circ h=\id_B=g\circ h$. 

We refer the reader to [MLM, Ch.IV.4, Th.2] for a proof of the following result, also known as the {\it crude monadicity Theorem}:
\begin{theorem}\label{montheorem2}
Let $F\colon \mathscr{C}\rightleftarrows\mathscr{D}\colon U$ be a pair of adjoint functors, and let $T=UF$ be the corresponding monad. Consider the Eilenberg-MacLane comparison functor $\Phi\colon \mathscr{D}\to\mathscr{C}^T$. Then:
\begin{itemize}
\item[(1)] if $\mathscr{D}$ has coequalizers of all reflexive pairs, the functor $\Phi$ admits a left adjoint $\Psi\colon\mathscr{C}^T\to\mathscr{D}$,
\item[(2)] if, furthermore, $U$ preserves these coequalizers, the unit of the adjunction $\Id_{\mathscr{C}^T}\to \Phi\circ \Psi$ is an isomorphism,
\item[(3)]  if, furthermore, $U$ reflects isomorphisms, the counit of the adjunction $\Psi\circ\Phi\to\Id_{D}$ is also an isomorphism.
\end{itemize}
Therefore, if (1)-(3) hold, $(U,F)$ is monadic.
\end{theorem}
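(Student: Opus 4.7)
The plan is to construct the left adjoint $\Psi\colon \mathscr{C}^T\to\mathscr{D}$ explicitly and then verify each of the three conclusions in turn, following the standard strategy for crude monadicity. For any $T$-algebra $(A,a)$, define $\Psi(A)$ as the coequalizer in $\mathscr{D}$ of the parallel pair
\[
FUFA \rightrightarrows FA,
\]
whose arrows are $F(a)\colon FTA\to FA$ and $\varepsilon_{FA}\colon FUFA\to FA$. This pair is reflexive with common splitting $F(\eta_A)\colon FA\to FUFA$: the triangle identity gives $\varepsilon_{FA}\circ F(\eta_A)=\id_{FA}$, and the algebra axiom $a\circ\eta_A=\id_A$ gives $F(a)\circ F(\eta_A)=\id_{FA}$. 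The hypothesis of (1) therefore guarantees that $\Psi(A)$ exists; functoriality in $A$ is automatic from the universal property.

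For (1), I would apply $\Hom_\mathscr{D}(-,Y)$ to the defining coequalizer; transposing both arrows across the adjunction $F\dashv U$ turns the resulting equalizer in $\Sets$ into the set of maps $f\colon A\to UY$ satisfying the $T$-algebra morphism condition $f\circ a=U(\varepsilon_Y)\circ T(f)$. This produces a natural bijection $\Hom_\mathscr{D}(\Psi(A),Y)\cong\Hom_{\mathscr{C}^T}(A,\Phi(Y))$, establishing $\Psi\dashv\Phi$.

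For (2), apply $U$ to the defining coequalizer of $\Psi(A)$. By hypothesis $U$ preserves it, yielding a coequalizer
\[
TTA\rightrightarrows TA\to U\Psi(A)
\]
in $\mathscr{C}$ with parallel arrows $Ta$ and $\mu_A=U\varepsilon_{FA}$. However, the monad axioms together with the algebra identities show that $a\colon TA\to A$ is already a \emph{split} coequalizer of the same pair, with splittings $\eta_A$ and $\eta_{TA}$, hence an absolute coequalizer by Lemma \ref{lemmaac}. Uniqueness of coequalizers yields a canonical isomorphism $U\Psi(A)\cong A$ which one then checks intertwines the induced $T$-algebra structure with $a$, so that the unit $A\to \Phi\Psi(A)$ is an isomorphism of $T$-algebras.

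For (3), given $Y\in\mathscr{D}$, part (2) applied to $\Phi(Y)$ implies that $U$ carries the counit $\Psi\Phi(Y)\to Y$ to an isomorphism, and since $U$ reflects isomorphisms the counit itself is an isomorphism. The main technical obstacle lies in (2): one must verify that the canonical identification $U\Psi(A)\cong A$ respects the algebra structure, i.e.\ that the map induced on $U\Psi(A)$ by $U\varepsilon_{\Psi(A)}$ transports to $a$ under this isomorphism. This is a diagram chase exploiting naturality of $\varepsilon$, the coequalizer universal property, and the splittings coming from $\eta$; once settled, parts (1) and (3) fall out cleanly.
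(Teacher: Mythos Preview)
Your argument is correct and follows the standard proof of the crude monadicity theorem. Note, however, that the paper does not supply its own proof of this statement: it simply cites [MLM, Ch.~IV.4, Th.~2]. Your construction of $\Psi$ via the reflexive coequalizer of $(F(a),\varepsilon_{FA})$, the hom-set transposition for the adjunction, the comparison in (2) with the split coequalizer $TTA\rightrightarrows TA\xrightarrow{a} A$ (which the paper records as diagram \eqref{moncase1} and Lemma \ref{lemmasplitrefl}), and the reflection step in (3) are precisely the ingredients of the classical argument found in that reference, so there is nothing to compare beyond observing that you have filled in what the paper leaves to the literature.
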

Note that, unlike for Theorem \ref{montheorem1}, {\it the converse statement is not true}. That is, the conditions for monadicity, given in Theorem \ref{montheorem2}, are sufficient but not necessary.

The following construction is of fundamental importance for both monadicity theorems. 

In the notations as above, let $A\in\mathscr{D}$. Consider two morphisms 
\begin{equation}\label{eqmonmain}
FUFUA\overset{f}{\underset{g}{\rightrightarrows}}FUA
\end{equation}
where $f=FU\varepsilon_A$ and $g=\varepsilon_{FUA}$. (Similarly, one defines such two maps for $A\in \mathscr{C}^T$).

One has {\it two different extensions} of this pair of arrows, which form a $U$-split coequalizer and a reflexive pair, correspondingly. 

For the first case, consider 
\begin{equation}\label{moncase1}
\xymatrix{
UFUFUA\ar@<0.5ex>[r]^{Uf}\ar@<-0.5ex>[r]_{Ug}&UFUA\ar[r]^{h}\ar@/_1.5pc/[l]_{s_1}&UA\ar@/_1.5pc/[l]_t
}
\end{equation}
with $s_1=\eta_{UFUA}$, $t=\eta_{UA}$, $h=U\varepsilon_A$.

For the second case, consider 
\begin{equation}\label{moncase2}
\xymatrix{
FUFUA\ar@<0.5ex>[r]^{f}\ar@<-0.5ex>[r]_{g}&FUA\ar@/_1.5pc/[l]_{s_2}
}
\end{equation}
with $s_2=F\eta_{UA}$.

The following lemma is proven by a direct check:
\begin{lemma}\label{lemmasplitrefl}
For any $A\in\mathscr{D}$ (or $A\in\mathscr{C}^T$), \eqref{moncase1} is a split coequalizer in $\mathscr{C}$, whence \eqref{moncase2} is a reflexive pair in $\mathscr{D}$ (corresp., in $\mathscr{C}^T$).
\end{lemma}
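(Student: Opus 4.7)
The proof is a direct diagram chase, and I would handle the two claims separately, relying only on the triangle identities $U\varepsilon\circ\eta U=\id_U$ and $\varepsilon F\circ F\eta=\id_F$ of the adjunction $F\dashv U$, together with the naturality squares of $\eta$ and $\varepsilon$. Nothing about the specific category $\mathscr{D}$ is used, which is why the same argument will apply verbatim for $A\in\mathscr{C}^T$, using the unit and counit of the induced Eilenberg--Moore adjunction $F^T\dashv U^T$.

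For the reflexive pair (\ref{moncase2}), I would simply exhibit $s_2=F\eta_{UA}$ as a common section of $f=FU\varepsilon_A$ and $g=\varepsilon_{FUA}$: the first equality $FU\varepsilon_A\circ F\eta_{UA}=F(U\varepsilon_A\circ\eta_{UA})=F\id_{UA}=\id_{FUA}$ is the $U$-triangle identity evaluated at $A$ and then acted on by $F$, while the second equality $\varepsilon_{FUA}\circ F\eta_{UA}=\id_{FUA}$ is the $F$-triangle identity evaluated at $UA$. This is immediate and there is nothing more to check.

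For the split coequalizer (\ref{moncase1}) I would verify the four defining identities one by one, keeping track of which parallel arrow plays the role of ``$f$'' and which of ``$g$''. The identity $h\circ t=U\varepsilon_A\circ\eta_{UA}=\id_{UA}$ is the $U$-triangle identity evaluated at $A$; the splitting property $U\varepsilon_{FUA}\circ\eta_{UFUA}=\id_{UFUA}$ is the same triangle identity evaluated at $FUA$. The identity $UFU\varepsilon_A\circ\eta_{UFUA}=\eta_{UA}\circ U\varepsilon_A=t\circ h$ is precisely the naturality square of $\eta\colon\Id\Rightarrow UF$ at the morphism $U\varepsilon_A$. Finally, $U\varepsilon_A\circ UFU\varepsilon_A=U\varepsilon_A\circ U\varepsilon_{FUA}$ follows from the naturality square of $\varepsilon\colon FU\Rightarrow\Id$ at the morphism $\varepsilon_A$, applied under $U$. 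The conclusion that (\ref{moncase1}) is a split coequalizer (and hence, by Lemma \ref{lemmaac}, an absolute coequalizer) then follows.

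There is no genuine obstacle: the only mild subtlety is the bookkeeping of which of the two parallel arrows in (\ref{moncase1}) is ``$f$'' and which is ``$g$'' in the convention fixed at the start of Section 1.2.2, since the naming is interchanged between the two identities involving $s_1$. Once both diagrams are drawn with $\eta$ and $\varepsilon$ labelled in their natural spots, each identity is an immediate translation of a triangle identity or a naturality square, and the passage to $A\in\mathscr{C}^T$ requires no additional argument.
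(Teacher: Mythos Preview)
Your proposal is correct and is precisely the ``direct check'' the paper invokes without writing out: each of the four split-coequalizer identities and the two reflexive-pair identities reduces to a triangle identity or a naturality square, exactly as you describe. Your remark about the swap of which parallel arrow plays the role of the split-coequalizer ``$f$'' is the only thing to watch, and you have it right.
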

\qed

Note that $s_1$ is {\it not} a $U$-image of a morphism in $\mathscr{D}$, though $Uf$ and $Ug$ are. 
On the other hand, $s_2$ is a morphism in $\mathscr{D}$ (corresp., in $\mathscr{C}^T$).

\subsubsection{\sc The dg operad $\mathcal{O}$ and the monad of weakly unital dg categories}\label{subsectionmonadwu}
A dg graph $\Gamma$ over $\k$ is given by a {\it set} $V_\Gamma$ of vertices, and a complex $\Gamma(x,y)$ for any ordered pair $x,y\in V_\Gamma$. A morphism $F\colon \Gamma_1\to\Gamma_2$ is given by a map of sets $F_V\colon V_{\Gamma_1}\to V_{\Gamma_2}$, and by a map of complexes $F_E\colon \Gamma_1(x,y)\to \Gamma_2(F_V(x),F_V(y))$, for any $x,y\in V_{\Gamma_1}$. We denote by $\Graphs_\dgu(\k)$ the category whose objects are {\it unital} dg graphs over $\k$.

There is a natural forgetful functor $U\colon \Cat_{dgwu}(\k)\to \Graphs_\dgu(\k)$, where $U(C)$ is a graph $\Gamma$ with $V_\Gamma=\Ob(C)$, and $\Gamma(x,y)=C(x,y)$.

\begin{prop}\label{propmoncatwu}
The functor $U$ admits a left adjoint $F\colon \Graphs_\dgu(\k)\to \Cat_\dgwu(\k)$.
\end{prop}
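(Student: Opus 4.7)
The plan is to construct $F$ explicitly as a quotient of a free $\mathcal{O}$-algebra, using the non-symmetric dg operad $\mathcal{O}$ of subsection \ref{subsectionmonadwu}, and then to verify the universal property directly.

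Given a unital dg graph $\Gamma$ with vertex set $V$, I first view $\Gamma$ as a $V$-colored dg collection $\{\Gamma(x,y)\}_{x,y\in V}$, temporarily forgetting the distinguished closed degree zero element $\id_x^{\Gamma}\in\Gamma(x,x)^0$; call the resulting non-unital collection $\Gamma^{\flat}$. Since $\mathcal{O}$ is a non-symmetric dg operad, the free $\mathcal{O}$-algebra $\mathcal{O}\langle\Gamma^{\flat}\rangle$ on $\Gamma^{\flat}$ exists by the standard sum-over-trees construction, and by the defining property of $\mathcal{O}$ it is a weakly unital dg category with object set $V$. Denote by $\id_x^{\mathcal{O}}\in \mathcal{O}\langle\Gamma^{\flat}\rangle(x,x)^0$ the weak unit arising from the nullary generator of $\mathcal{O}$, and by $\iota\colon \Gamma^{\flat}\hookrightarrow \mathcal{O}\langle\Gamma^{\flat}\rangle$ the canonical inclusion of generators. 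Define $F(\Gamma)$ to be the quotient of $\mathcal{O}\langle\Gamma^{\flat}\rangle$ imposing the relations $\iota(\id_x^{\Gamma})=\id_x^{\mathcal{O}}$ for each $x\in V$. The composite $\Gamma\to U(\mathcal{O}\langle\Gamma^{\flat}\rangle)\to U(F(\Gamma))$ is by construction a map of unital dg graphs and serves as the unit of the adjunction.

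For the universal property, a morphism $\varphi\colon \Gamma\to U(C)$ in $\Graphs_\dgu(\k)$ restricts to a morphism of non-unital $V$-colored dg collections (along the underlying vertex map $V\to \Ob(C)$), which by freeness lifts uniquely to an $\mathcal{O}$-algebra morphism $\widetilde\varphi\colon \mathcal{O}\langle\Gamma^{\flat}\rangle\to C$. This $\widetilde\varphi$ respects the quotient relations, since on the one hand $\widetilde\varphi(\id_x^{\mathcal{O}})=\id_{\varphi(x)}^C$ because maps of $\mathcal{O}$-algebras send the nullary generator to the nullary generator, and on the other hand $\widetilde\varphi(\iota(\id_x^{\Gamma}))=\varphi(\id_x^{\Gamma})=\id_{\varphi(x)}^C$ because $\varphi$ is a map of unital dg graphs. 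Hence $\widetilde\varphi$ factors uniquely through $F(\Gamma)$, producing the desired morphism in $\Cat_\dgwu(\k)$. Naturality and bijectivity of this correspondence are then routine.

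The main obstacle, in my view, is ensuring that the quotient by the unit relations really is well-defined as a weakly unital dg category in the sense of Definition \ref{defwu}; concretely, one must check that the induced operations $p_n$ on the quotient still satisfy the full list of axioms, including $p_n(1,\dots,1)=0$ for $n\ge 2$. This amounts to an explicit analysis of how the nullary generator of $\mathcal{O}$ interacts with the higher operations, using the structure of $\mathcal{O}$ from subsection \ref{subsectionmonadwu}. An abstract alternative would be to invoke the adjoint functor theorem, using the limit constructions of subsection \ref{limcolimfirst} and the evident preservation of limits by $U$ together with a cardinality-based solution set, but this would obscure the explicit form of the resulting monad $T=UF$ that is needed later in the proof of Theorem \ref{wumonadic}.
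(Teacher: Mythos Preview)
Your strategy via the free $\mathcal{O}$-algebra is the paper's strategy, but with an extra step that the paper does not take. The paper defines $F(\Gamma)$ to be the free $\mathcal{O}$-algebra on $\Gamma$ outright, with no quotient: the weak unit $\id_x$ in $F(\Gamma)$ is simply the image of the nullary operation $j\in\mathcal{O}(0)$, and there is nothing to identify. Despite the phrase ``unital dg graphs'' appearing once in the text, the definition of a dg graph given in Section~\ref{subsectionmonadwu} carries no distinguished identity element, the forgetful functor $U$ records none, and the later uses of $F$ (for instance $\mathscr{P}(n)=FU(P(n))$ in Section~\ref{sectionijwu}, where $P(n)(0,0)=0$) only make sense under the non-unital reading. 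Your passage to $\Gamma^\flat$ and the subsequent quotient by $\iota(\id_x^\Gamma)=\id_x^{\mathcal{O}}$ are therefore superfluous: in the paper's setup there is no $\id_x^\Gamma$ to begin with. Your construction would be the correct left adjoint had the base category of graphs been equipped with marked units, and it does produce a genuine adjunction in that alternative setup, but it is a different adjunction (with a different monad $UF$) from the one the paper uses.

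On the point you flag as the main obstacle: it is not one. The conditions $p_n(1,\dots,1)=0$ for $n\ge 2$, $p_{1;1}=j$, $m(j,j)=j$, and the $A_\infty$ relation for $p$ are relations imposed in the operad $\mathcal{O}$ itself (items (ii)--(v) of \eqref{orel}). They therefore hold automatically in every $\mathcal{O}$-algebra and in any quotient of one by an operadic ideal; no separate verification on the quotient is required.
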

\begin{proof}
We provide a construction of the right adjoint to $U$. 

Consider the non-$\Sigma$ the dg operad $\mathcal{O}$ define as the quotient-operad of the free operad generated by the composition operations:
\begin{itemize}
\item[(a)] the composition operation $m\in\mathcal{O}(2)^0$
\item[(b)] $p_{n; i_1,\
\dots,i_k}\in \mathcal{O}(n-k)^{-n+1}$, $0\le k\le n$, $1\le i_1<i_2<\dots<i_k\le n$, whose meaning is explained in \eqref{relop} below,
\item[(c)] a 0-ary operation $j\in \mathcal{O}(0)^0$ (which generates the morphisms $\id_x$, $x\in \Ob C$, for a weakly unital dg category $C$)
\end{itemize}
by the following relations:
\begin{equation}\label{orel}
\begin{aligned}
\ &(i) \text{ the associativity of  $m$, and $dm=0$}\\
&(ii)\  m\circ (j,j)=j, dj=0\\
&(iii)\  p_{n; i_1,\dots,i_k}=0 \text{ if $k=0$ or $k=n$, $n\ge 2$}\\
&(iv)\  p_{1;1}=j, p_{1; -}=\id\\
&(v)\  \text{ relation $\eqref{relop2}$ below}
\end{aligned}
\end{equation}
Note that (ii) formally follows from the part of (iii), saying that $p_{n;1,2,\dots,n}=0$, and (v).

For a weakly unital dg category $C$, the operation $p_{n; i_1,\dots,i_k}(f_1,\dots,f_{n-k})$ is defined as 
\begin{equation}\label{relop}
p_n\big(f_1,\dots,f_{i_1-1}, \underset{i_1}{1_{x_1}},f_{i_1},\dots,f_{i_2-2},\underset{i_2}{1_{x_2}},f_{i_2-1},\dots,f_{i_3-3},\underset{i_3}{1_{x_3}},\dots\dots,\underset{i_k}{1_{x_k}},f_{i_k-k+1},\dots,f_{n-k}\big)
\end{equation}
where by $1_{x_i}$s are denoted the morphisms $1_{x_i}\in \k_C$ for the corresponding objects $x_i\in C$.

The operad $\mathcal{O}$ is freely generated by these operations and by $m$, with the relations being the associativity of $m$ and the relations on $p_{n;i_1,\dots,i_k}$ which express that \eqref{relop} are the summands of the Taylor components for the $A_\infty$ functor $p\colon C\oplus\k_C\to C$. These relations read:

\begin{equation}\label{relop2}
\begin{aligned}
\ &dp_{n;i_1,\dots,i_k}=\sum_{1\le \ell\le n-1}\pm m\circ (p_{\ell; i_1,\dots,i_{s(\ell)}},p_{n-\ell;i_{s(\ell)+1},\dots,i_k})+\\
&\sum_{r=1}^{n-1}\pm p_{n-1; j_1,\dots,j_{q(r)}}\circ (\id,\dots,\id,\underset{r}{m(a(r),a(r+1))},\id,\dots,\id)
\end{aligned}
\end{equation}
with the notations explained below.

We have to explain notations in \eqref{relop2}. By $s(\ell)$ is denoted the maximal $s$ such that $i_s\le \ell$;  $a(r)$ is equal to $\id$ if $r\not\in\{i_1,\dots,i_k\}$ and is equal to $j$ otherwise. Finally, $q(r)\in \{k,k-1,k-2\}$; $q(r)=k$ if neither $r,r+1$ are in $\{i_1,\dots,i_k\}$, and in this case $j_s=i_s$ for $i_s\le r$ and $j_s=i_s-1$ for $i_s>r$; $q(r)=k-1$ if either $r$ or $r+1$ are in $\{i_1,\dots,i_k\}$ but not both, in this case $j_s=i_s$ for $i_s<r$, and $j_s=i_{s+1}-1$ for $i_{s+1}>r$; finally, if both $r,r+1$ are in $\{i_1,\dots,i_k\}$ one sets $q(r)=k-2$ and $j_s=i_s$ for $i_s<r$, and $j_s=i_{s+2}-1$ for $i_{s+2}>r+1$.

The category $\Graphs_\dgu(\k)$ has a natural internal $\Hom$ in $\Vect_\dg(\k)$. 
We associate with a graph $\Gamma\in \Graphs_\dgu(\k)$ a 1-globular set enriched over $\Vect_\dg(\k)$, in the sense of Batanin [Ba], in a standard way. Namely, we set $X_0=V_\Gamma$, and $X_1=\prod_{x,y\in V_\Gamma}\Gamma(x,y)_+$, where $\Gamma(x,y)_+=\Gamma(x,y)$ for $x\ne y$, and $\Gamma_+(x,x)=\Gamma(x,x)\oplus \k\id_x$. There are maps $t_0,t_1\colon X_1\to X_0$, mapping an element in $\Gamma(x,y)$ to $x$ and $y$, correspondingly, and a map $s\colon X_0\to X_1$ sending $x$ to $\id_x$. It is an (enriched) 1-globular set, meaning that $t_1s=t_2s=\id_{X_0}$. 
Therefore, one can talk on algebras in $\Graphs_\dgu(\k)$ over a dg operad. 

A structure of a weakly unital dg category $C$ on its underlying graph $U(C)$ in $\Graphs_\dg(\k)$ is the same that an action of the operad $\mathcal{O}$ on $U(C)$.

Let $\Gamma$ be a dg graph. Define $F(\Gamma)$ to be the free $\mathcal{O}$-algebra generated by $\Gamma$. 
Explicitly, $F(\Gamma)$ is defined as follows. 

We define a chain of length $n$ in $\Gamma$ as an ordered set $x_0,x_1,\dots, x_n$. Denote by $\Gamma_n$ the set of all chains of length $n$ in $\Gamma$. For $c\in\Gamma_n$, set 
$$
\Gamma(c):=\Gamma(x_0,x_1)_+\otimes\Gamma(x_1,x_2)_+\otimes\dots\otimes \Gamma(x_{n-1},x_n)_+
$$
and 
$$
\Gamma(n)(x,y):=\sum_{\substack{{c\in \Gamma_n}\\{x_0(c)=x,x_n(c)=y}}}\Gamma(c)
$$
(for $n=0$ we set $\Gamma(0)(x,x)=\k\id_x$ and $\Gamma(0)(x,y)=0$ for $x\ne y$).
Set
\begin{equation}
\Gamma_{\mathcal{O}}(x,y):=\sum_{n\ge 0}\mathcal{O}(n)\otimes\Gamma(n)(x,y)
\end{equation}
It gives rise to a graph $\Gamma_{\mathcal{O}}\in\Graphs_{\dgu}(\k)$ with $V_{\Gamma_\mathcal{O}}=V_\Gamma$.
Clearly $\Gamma_\mathcal{O}$ is an algebra over the operad $\mathcal{O}$, and therefore it defines a weakly unital dg category $F(\Gamma)$ such that $UF(\Gamma)=\Gamma_{\mathcal{O}}$.

One has:
\begin{equation}
\Hom_{\Cat_{dgwu}(\k)}(F\Gamma, D)=\Hom_{\Graphs_\dgu(\k)}(\Gamma, U(D))
\end{equation}
which is natural in $\Gamma$ and $D$, and gives rise to the required adjunction. 
\end{proof}

The dg operad $\mathcal{O}$ plays an important role in our paper. For the proof of Theorem \ref{theorq2} it will be important to know its cohomology.
Despite the answer is easy to state, the computation is rather technical. We provide it in Section \ref{sectionmop}.

\begin{theorem}\label{theoremmop}
The dg operad $\mathcal{O}$ is quasi-isomorphic to the operad $\mathscr{A}ssoc_+$ of strictly unital associative algebras, by the map sending $m$ to $m$, $j$ to 1, and all $p_{n; n_1,\dots,n_k}, k\ge 2$ to 0. 
\end{theorem}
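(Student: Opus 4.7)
\textbf{Proof plan for Theorem~\ref{theoremmop}.} The aim is to show that $H^{\bullet}(\mathcal{O}(n))$ is one-dimensional and concentrated in degree $0$ for each arity $n\ge 0$. Since the projection $p$ sends the canonical cocycle $\mu_n := m\circ(m\circ(\cdots))\in\mathcal{O}(n)^0$ to the generator of $\Assoc_+(n)$, this will imply that $p$ is a quasi-isomorphism. By Proposition~\ref{propmoncatwu}, $\mathcal{O}(n)$ is spanned by planar rooted trees with $n$ ordered leaves and internal vertices labeled by $m$, $j$, or $p_w$ (for mixed words $w\in\{F,J\}^{\ge 2}$ that are neither $F^{|w|}$ nor $J^{|w|}$), modulo the non-differential relations (i)--(iv) of~\eqref{orel}.

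I plan to attack the computation via a filtration-and-spectral-sequence argument. A natural choice is the filtration by the combined quantity $W(T)+N(T)$, where $W(T):=\sum_{v\text{ a }p\text{-vertex of }T}(|w(v)|-1)$ and $N(T)$ is the number of $p$-vertices of $T$. Inspection of~\eqref{relop2} shows that every non-zero summand of $d$ applied to a single $p_w$-vertex weakly decreases $W+N$: the ``split'' terms with both resulting words mixed of length $\ge 2$ preserve it (weight drops by one while the $p$-vertex count rises by one), whereas all other split and merge terms strictly reduce it. The associated spectral sequence $E_r^{\bullet,\bullet}\Rightarrow H^{\bullet}(\mathcal{O}(n))$ therefore has $d_0$ equal to this ``pure splitting'' differential, which acts locally at each $p$-vertex by the bar-type boundary $w\mapsto\sum_{\ell}\pm\, m\circ(p_{w_1},p_{w_2})$ summed only over cuts into non-trivial mixed pieces.

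The key step is the computation of $E_1$. Localized at a $p$-vertex labeled by $w$, the relevant complex is the space of mixed binary words equipped with the pure-split differential, which I expect to be acyclic by an explicit contracting homotopy (for instance, a canonical ``prepend a minimal mixed prefix'' operation, or equivalently left-multiplication by a fixed length-two mixed word). Tensoring acyclic local pieces across the tree collapses $E_1$ onto the subspace of trees with no $p$-vertex, i.e., onto the sub-operad generated by $m$ and $j$ with only the relations (i)--(ii). The subsequent differentials in the spectral sequence are driven by the merge terms of~\eqref{relop2} acting on the length-two generators $p_{JF}$ and $p_{FJ}$, and they exhibit the classes $m(j,\id)-\id$ and $m(\id,j)-\id$ as coboundaries. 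This upgrades the surviving sub-operad to $\Assoc_+(n)$, so $E_\infty(\mathcal{O}(n))\cong\Assoc_+(n)$ and the theorem follows.

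The principal obstacle is the acyclicity of the local bar-type complex and the construction of a contracting homotopy compatible with the signs of~\eqref{relop2}. The complex of mixed words under the pure-split boundary is a non-standard object, sitting between the bar complex of the free associative algebra on two letters and the reduced cellular chain complex of a ``cube minus two opposite poles''; removing the two pure corners rather than one introduces cell-level subtleties that must be tracked carefully, and the homotopy must globalize consistently through the nested operadic composition. A secondary point is convergence of the spectral sequence, which should be straightforward once one checks that $W+N$ is bounded below in each fixed cohomological degree on trees of bounded total arity.
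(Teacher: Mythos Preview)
Your filtration is natural, but the key acyclicity claim is false. Since $W(T)=-\deg(T)$, your $d_0$ retains exactly those terms of~\eqref{relop2} that raise the $p$-vertex count by one, namely the splits $p_w\mapsto m(p_{w'},p_{w''})$ with both $w',w''$ mixed of length $\ge 2$. No word of length $\le 3$ admits such a cut, so every tree whose $p$-labels all have length $2$ or $3$ is a $d_0$-cycle; and since $d_0$ strictly increases $N$, a tree with a single $p$-vertex is never a $d_0$-boundary. Concretely, the whole stratum $W+N=2$ of $\mathcal{O}(1)$ (trees containing one $p_{2;1}$ or $p_{2;2}$) has $d_0=0$ and survives intact to $E_1$; the six mixed length-$3$ words and ten of the fourteen mixed length-$4$ words survive as well. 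So $E_1$ is far larger than the span of $p$-free trees, and the higher differentials you would then have to control are at least as hard as the original problem. Your proposed homotopies fail for the same reason: ``prepend a length-two mixed word'' has degree $-2$, not $-1$, and the cobar-style ``merge the first two factors'' gives $(d_0h+hd_0)(p_w)=(\text{number of mixed cuts of }w)\cdot p_w$, which vanishes for all short words.

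The paper filters instead by $\sharp(T)-\sharp_p(T)=\sum_{v}(|w_v|_F-1)$, under which the associated $d_0$ keeps the \emph{merge} side of~\eqref{relop2} (together with the splits that peel off a pure-$J$ prefix or suffix). Each internal run of consecutive $1$'s then contributes the acyclic two-periodic complex $\cdots\xrightarrow{\id}\k\xrightarrow{0}\k\xrightarrow{\id}\k$, and the boundary runs reduce, via a second filtration, to the reduced cobar complex of a cofree coalgebra. These local pieces \emph{are} acyclic, so the paper's spectral sequence collapses already at $E_1$ onto $\Assoc_+$. The moral: here it is the merge terms, not the split terms, whose local cohomology is tractable.
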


\subsubsection{\sc The coequalizers in $\Graphs_{\dgu}(\k)$}\label{subsectioncoeqgra}
It is standard that coequalizers, and, therefore, all small colimits exist in $\Graphs_{\dgu}(k)$.

Recall the construction.

Let 
\begin{equation}\label{coeqgra}
\Gamma_1\overset{f}{\underset{g}{\rightrightarrows}}\Gamma_2
\end{equation}
be a pair of morphisms in $\Graphs_{\dgu}(\k)$.

Define its coequalizer $\Gamma_{f,g}$ as a small graph in $\Graphs_{\dgu}(\k)$ whose set of objects is the coequalizer of the corresponding maps of the sets of objects
$$
\Ob(\Gamma_1)\overset{f}{\underset{g}{\rightrightarrows}}\Ob(\Gamma_2)
$$
It is the quotient set of $\Ob(\Gamma_2)$ by the equivalence relation generated by the binary relation $f(x)\mathrm{R}g(x)$, $x\in\Ob(\Gamma_1)$.

Let $[x]$ and $[y]$ be two equivalence classes. Define a complex $\Gamma_{f,g}([x],[y])$ as the coequalizer in $\Vect_\dg(\k)$ of
\begin{equation}\label{coeqgra2}
\bigoplus_{\substack{{w,z\in \Ob(\Gamma_1)}\\{[f(w)]=[g(w)]=[x]}\\
{[f(z)]=[g(z)]=[y]}}}\Gamma_1(w,z)\overset{f_*}{\underset{g_*}{\rightrightarrows}}      \bigoplus_{\substack{{a,b\in \Ob(\Gamma_2)}\\ {[a]=[x], [b]=[y]}}}\Gamma_2(a,b)
\end{equation}
where $f_*$ maps $\phi\in\Gamma_1(w,z)$ to $f(\phi)$, and $g_*$ maps it to $g(\phi)$.
If at least one class of $[x],[y]$ is not in the image of $f$ (which is the same that the image of $g$), we define source complex in \eqref{coeqgra2} as 0. 

It is easy to check that the constructed dg graph $\Gamma_{f,g}$ is a coequalizer of \eqref{coeqgra}.

\subsubsection{\sc The coequalizers in $\Cat_{\dgwu}(\k)$, I}
Consider a pair of maps of weakly unital dg categories
\begin{equation}\label{eq21}
A\underset{G}{\overset{F}{\rightrightarrows}} B
\end{equation}
It is not straightforward to find (or to prove existence of) its coequalizer. 

However, one always can find the coequalizer of the maps of graphs 
\begin{equation}\label{eq22}
U(A)\underset{U(G)}{\overset{U(F)}{\rightrightarrows}} U(B)\xrightarrow{\ell}\Coeq(U(F),U(G))
\end{equation}
as in Section \ref{subsectioncoeqgra}.
For some special diagrams \eqref{eq21}, the functor $U$ creates coequalizers, see below.  Afterwards, we reduce the general coequalizers \eqref{eq21} to these special ones, in Section \ref{subsectioncoeqcatii}.

\begin{defn}{\rm
We say that the diagram \eqref{eq21} is {\it good} if $\Ob(A)=\Ob(B)$, and both $F$ and $G$ are identity maps on the sets objects.
}
\end{defn}

Assume that \eqref{eq21} is good.
Then the graph $\Coeq(U(F),U(G))$, which is a particular case of general coequalizers \eqref{coeqgra} in $\Graphs_\dgu(\k)$, is especially simple.  It has the set of vertices equal to  $\Ob(A)=\Ob(B)$, and its morphisms are the quotient-complexes $$\Coeq(U(F),U(G))(X,Y)=B(X,Y)/(F(f)-G(f))_{f\in A(X,Y)}$$

\begin{lemma}\label{lemmaquotient}
Suppose we are given a diagram \eqref{eq21} which is good. Then
a weakly unital dg category structure $Q$ and a map of weakly unital dg categories $L\colon B\to Q$ such that
$$
A\underset{G}{\overset{F}{\rightrightarrows}} B\xrightarrow{L} Q
$$ 
is a coequalizer, and $U(Q)=\Coeq(U(F),U(G))$, $U(L)=\ell$, exist if and only if the following two conditions hold:
\begin{itemize}
\item[(1)] the subcomplexes $(F(f)-G(f))_{f\in A(X,Y)}$, $X,Y\in \Ob(A)$, form a two-sided ideal in $B$:
\begin{equation}
\ell(g\circ (F(f)-G(f))\circ g^\prime)=0
\end{equation}
for any morphism $f$ in $A$ and any morphisms $g,g^\prime$ in $B$ (such that the compositions are defined),
\item[(2)] 
\begin{equation}
\ell(p_n^B(g_1\otimes\dots g_k\otimes (g\circ (F(f)-G(f))\circ g^\prime)\otimes g_{k+1}\otimes\dots \otimes g_{n-1}))=0
\end{equation}
for $n\ge 2$, and any morphism $f$ in $A$ (some of $g_i$ are elements of $\k_B$).
\end{itemize}
In particular, the weakly unital dg category $Q$, if it exists, is uniquely defined (which means that in this case $U$ strictly creates the coequalizer).
\end{lemma}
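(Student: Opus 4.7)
The plan is to exploit the explicit description of coequalizers in $\Graphs_\dgu(\k)$ from Section \ref{subsectioncoeqgra}, and to reduce the lemma to checking that the subcomplex $N(X,Y):=\langle F(f)-G(f)\mid f\in A(X,Y)\rangle\subseteq B(X,Y)$ is stable under composition with arbitrary $B$-morphisms and under all higher operations $p_n^B$ when one of the inputs lies in $N$.

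For the \emph{only if} direction, assume such an $L\colon B\to Q$ exists with $U(Q)=\Coeq(U(F),U(G))$ and $U(L)=\ell$. Then $L$ is a weakly unital dg functor satisfying $L\circ F=L\circ G$, so $L(F(f)-G(f))=0$ for every morphism $f$ in $A$. Functoriality of $L$ as a non-unital dg functor immediately yields condition (1), while equation \eqref{eq10} applied to $L$ yields condition (2).

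For the \emph{if} direction, set $\Ob(Q):=\Ob(B)$ and $Q(X,Y):=B(X,Y)/N(X,Y)$; this matches the underlying graph $\Coeq(U(F),U(G))$ since the diagram is good. The subcomplex $N(X,Y)$ is stable under the differential since $d(F(f)-G(f))=F(df)-G(df)\in N(X,Y)$. Condition (1) ensures that the composition of $B$ descends to an associative composition on $Q$, and condition (2) ensures that each $p_n^B$ descends to a well-defined operation $p_n^Q$ on $Q$. The $A_\infty$ relations for the family $\{p_n^Q\}$, together with $p_1^Q(1_X)=\id_X$ and $p_n^Q(\ldots,1,\ldots)=0$ for $n\ge 2$, and the idempotency $\id_X\circ\id_X=\id_X$, are all inherited from $B$; thus $Q\in\Cat_\dgwu(\k)$ and the projection $L:=\ell\colon B\to Q$ is a weakly unital dg functor by construction. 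The universal property is then direct: any weakly unital dg functor $H\colon B\to D$ with $H\circ F=H\circ G$ vanishes on each $F(f)-G(f)$, and compatibility of $H$ with composition and with the $p_n$ operations forces $H$ to vanish on all of $N(X,Y)$, producing a unique factorization $\bar H\colon Q\to D$ that is weakly unital dg thanks to surjectivity of $L$ on morphisms.

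Finally, uniqueness of the weakly unital structure on $Q$ (given $U(Q)$ and $U(L)$) is immediate, since the equalities $L(g)\circ L(g')=L(g\circ g')$ and $p_n^Q(L(g_1),\ldots,L(g_n))=L(p_n^B(g_1,\ldots,g_n))$, together with surjectivity of $\ell$ on morphisms, force the composition and every $p_n^Q$ on $Q$ to be unique. The main difficulty, which is essentially bookkeeping, is to see that conditions (1) and (2) jointly capture all closure requirements for $N$ to be an ideal with respect to \emph{both} the strict composition \emph{and} the full family of Kontsevich-Soibelman operations $p_n^B$; once this is observed, the rest of the argument reduces to verifying that a quotient by such a $(\circ, p_n)$-stable subcomplex inherits the weakly unital dg structure.
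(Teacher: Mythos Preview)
Your proposal is correct and is exactly the elaboration the paper omits: the paper's own proof reads, in full, ``It is clear.'' (One harmless slip: Definition~\ref{defwu} requires $p_n(1_X,\dots,1_X)=0$ for $n\ge 2$ with \emph{all} arguments equal to $1_X$, not $p_n(\dots,1,\dots)=0$ with a single $1$; this does not affect your argument, since whatever identities hold in $B$ pass verbatim to the quotient.)
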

It is clear.

\qed

Recall that diagram \eqref{eq21} is called reflexive if there exists $H\colon B\to A$ such that $FH=GH=\id_B$.

\begin{prop}\label{proprefl}
Assume we are given a good and reflexive diagram \eqref{eq21}. Then conditions (1) and (2) of Lemma \ref{lemmaquotient} are fulfilled. Consequently, the functor $U$ strictly creates the coequalizer. 
\end{prop}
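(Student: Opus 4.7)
The plan is to verify the two conditions of Lemma \ref{lemmaquotient}; once they are checked, the lemma yields the unique weakly unital dg category structure $Q$ on the graph $\Coeq(U(F),U(G))$ together with the weakly unital dg functor $L\colon B\to Q$ lifting $\ell$, which is precisely the ``strictly creates'' clause.

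I start with two elementary observations about the splitting $H\colon B\to A$. Since goodness forces $F$ and $G$ to be identity on objects, the relation $FH=\id_B$ forces $H$ to be identity on objects as well, so $H(g)\colon X\to Y$ whenever $g\colon X\to Y$ in $B$. Moreover $H$ preserves the distinguished units by \eqref{eq9}, and the canonical extension $H\oplus\id$ of Definition \ref{defwucat} sends $1_X\in\k_B$ to $1_X\in\k_A$.

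For condition (1), I would take $\alpha:=H(g)\circ f\circ H(g^\prime)\in A$; using $FH=GH=\id_B$ and functoriality one reads off
$$
F(\alpha)-G(\alpha)=g\circ F(f)\circ g^\prime-g\circ G(f)\circ g^\prime=g\circ\bigl(F(f)-G(f)\bigr)\circ g^\prime,
$$
so the right-hand side is of the form $F(\alpha)-G(\alpha)$ with $\alpha\in A$ and is thus annihilated by $\ell$. For condition (2), I would upgrade the same trick to the $A_\infty$ operations: set $\tilde g_i:=H(g_i)\in A$ if $g_i\in B$, and $\tilde g_i:=1_{X_i}\in\k_A$ if $g_i=1_{X_i}\in\k_B$, and put
$$
\beta:=p_n^A\bigl(\tilde g_1\otimes\cdots\otimes(H(g)\circ f\circ H(g^\prime))\otimes\cdots\otimes\tilde g_{n-1}\bigr)\in A.
$$
The intertwining identity \eqref{eq10} for the extensions $F\oplus\id$ and $G\oplus\id$, together with $\k$-linearity of $p_n^B$ in each slot, then yields $F(\beta)-G(\beta)$ equal to the argument of $\ell$ in condition (2), so this argument too is killed by $\ell$.

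The only mildly delicate point is the bookkeeping of the unit arguments $1_X\in\k_B$ appearing in condition (2); this is handled uniformly by commutativity of diagram \eqref{mapwu} applied to $F$, $G$, and $H$. I do not anticipate a genuine obstacle: the whole argument is the standard reflexive-coequalizer computation, transported to the $A_\infty$ operations $p_n$ via the naturality identity \eqref{eq10}.
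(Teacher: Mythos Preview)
Your proof is correct and follows essentially the same approach as the paper: in both cases the reflexive splitting $H$ is used to exhibit the relevant element of $B$ as $F(\alpha)-G(\alpha)$ (resp.\ $F(\beta)-G(\beta)$) for a single element of $A$, whence $\ell$ annihilates it. Your handling of the unit arguments $1_X\in\k_B$ via the extension $H\oplus\id$ is slightly more explicit than the paper's, but the argument is otherwise identical.
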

\begin{proof}
Prove that (1) holds. One has:
\begin{equation}
\begin{aligned}
\ &\ell(g\circ (F(f)-G(f))\circ g^\prime)=\ell(g\circ F(f)\circ g^\prime)-\ell(g\circ G(f)\circ g^\prime)=\\
&\ell(FH(g)\circ F(f)\circ FH(g^\prime))-\ell(GH(g)\circ G(f)\circ GH(g^\prime))=\\
&\ell(F(H(g)\circ f\circ H(g^\prime))-\ell(G(H(g)\circ f\circ H(g^\prime))=0
\end{aligned}
\end{equation}
Prove that (2) holds. One has:
\begin{equation}
\begin{aligned}
\ &\ell(p_n^B(g_1\otimes\dots \otimes (g\circ (F(f)-G(f))\circ g^\prime)\otimes \dots \otimes g_{n-1}))=\\
&\ell(p_n^B(g_1\otimes\dots\otimes (g\circ F(f)\circ g^\prime)\otimes\dots\otimes g_{n-1}))-
\ell(p_n^B(g_1\otimes\dots\otimes (g\circ G(f)\circ g^\prime)\otimes\dots\otimes g_{n-1}))=\\
&\ell(p_n^B(FH(g_1)\otimes\dots\otimes (FH(g)\circ F(f)\circ FH(g^\prime))\otimes\dots\otimes FH(g_{n-1}))-\\&\ell(p_n^B(GH(g_1)\otimes\dots\otimes (GH(g)\circ G(f)\circ GH(g^\prime))\otimes\dots\otimes GH(g_{n-1})))=\\
&\ell(p_n^B(FH(g_1)\otimes\dots\otimes (F(H(g)\circ f\circ H(g^\prime))\otimes\dots\otimes FH(g_{n-1}))-\\&\ell(p_n^B(GH(g_1)\otimes\dots\otimes (G(H(g)\circ f\circ H(g^\prime)))\otimes\dots\otimes GH(g_{n-1})))\overset{*}{=}\\
&\ell(Fp_n^A(H(g_1)\otimes\dots\otimes (H(g)\circ f\circ H(g^\prime))\otimes\dots\otimes H(g_{n-1})))-\\
&\ell(Gp_n^A(H(g_1)\otimes\dots\otimes (H(g)\circ f\circ H(g^\prime))\otimes\dots\otimes H(g_{n-1})))=0
\end{aligned}
\end{equation}
where the equality marked by * follows from the fact that $F,G$ are functors of weakly unital dg categories and \eqref{eq10}.
\end{proof}

\subsubsection{\sc The coequalizers in $\Cat_{dgwu}(\k)$, II}\label{subsectioncoeqcatii}
In this Section, we closely follow the arguments in [Wo, Prop. 2.11]. We reproduce them here for completeness. 

We make use of the following lemma, due to [Li, pp. 77-78], and known as the 3x3-lemma. 

\begin{lemma}\label{lemma3x3}
Consider the following diagram in a category
\begin{equation}
\xymatrix{
A_1\ar@<0.5ex>[r]^{h_1}\ar@<-0.5ex>[r]_{h_2}\ar@<-0.5ex>[d]_{\alpha_1}\ar@<0.5ex>[d]^{\alpha_2}&B_1\ar[r]^{h_3}\ar@<-0.5ex>[d]_{\beta_1}\ar@<0.5ex>[d]^{\beta_2}&C_1\ar@<-0.5ex>[d]_{\gamma_1}\ar@<0.5ex>[d]^{\gamma_2}\\
A_2\ar@<0.5ex>[r]^{g_1}\ar@<-0.5ex>[r]_{g_2}\ar[d]_{\alpha_3}&B_2\ar@{}[dr]|{*}\ar[r]^{g_3}\ar[d]_{\beta_3}&C_2\ar[d]_{\gamma_3}\\
A_3\ar@<0.5ex>[r]^{f_1}\ar@<-0.5ex>[r]_{f_2}&B_3\ar[r]^{f_3}&C_3
}
\end{equation}
in which the top and the middle rows are coequalizers, the leftmost and the middle columns are coequalizers, and all squares commute: $g_i\alpha_i=\beta_i h_i$, $f_i\alpha_3=\beta_3 g_i$, $g_3\beta_i=\gamma_i h_3$, $f_3\beta_3=\gamma_3g_3$, $i=1,2$. Then the following statements are equivalent:
\begin{itemize}
\item[(1)] the bottom row is a coequalizer,
\item[(2)] the rightmost column is a coequalizer,
\item[(3)] the square in the lower right corner (marked by $*$) is a push-out.
\end{itemize}
\end{lemma}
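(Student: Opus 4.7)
The plan is to establish the equivalence by proving $(3)\Leftrightarrow (1)$ and $(3)\Leftrightarrow (2)$ separately; because the $3\times 3$ diagram is symmetric under transposition of rows and columns, the second case is obtained from the first by relabelling. The only general facts I shall invoke are that \emph{every coequalizer is an epimorphism}---applied here to $\alpha_3$, $\beta_3$, $g_3$, and $h_3$---together with the explicitly listed commutativity relations between the arrows.

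For $(3)\Rightarrow (1)$, given a test map $k\colon B_3\to X$ with $kf_1=kf_2$, the relation $\beta_3 g_i=f_i\alpha_3$ yields $k\beta_3 g_1=kf_1\alpha_3=kf_2\alpha_3=k\beta_3 g_2$, so the middle-row coequalizer furnishes a unique $u\colon C_2\to X$ with $ug_3=k\beta_3$. Then $(u,k)$ is a compatible cocone on the lower-right square, and the pushout hypothesis supplies a unique $w\colon C_3\to X$ with $w\gamma_3=u$ and $wf_3=k$, which is the required factorization through $f_3$. Uniqueness of $w$ subject only to the weaker condition $wf_3=k$ is recovered from $g_3$ being epi: any competitor $w'$ satisfies $w'\gamma_3 g_3=w'f_3\beta_3=k\beta_3=ug_3$, hence $w'\gamma_3=u$ by epi-ness of $g_3$, and then $w'=w$ by the pushout's uniqueness clause.

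For $(1)\Rightarrow (3)$, given a compatible pair $u\colon C_2\to X$, $v\colon B_3\to X$ with $ug_3=v\beta_3$, compute $vf_i\alpha_3=v\beta_3 g_i=ug_3g_i$; the right-hand side is independent of $i$ because $g_3g_1=g_3g_2$, and $\alpha_3$ being epi therefore forces $vf_1=vf_2$. Hypothesis $(1)$ then produces a unique $w\colon C_3\to X$ with $wf_3=v$, and the relation $w\gamma_3=u$ follows from $w\gamma_3g_3=wf_3\beta_3=v\beta_3=ug_3$ combined with $g_3$ being epi. The case $(2)\Leftrightarrow (3)$ is obtained by transposing the above argument, with $h_3$ and $\beta_3$ now playing the roles of $g_3$ and $\alpha_3$ respectively. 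No individual step is genuinely difficult; the main obstacle is bookkeeping---tracking which arrow equalizes which at each stage---but the rows-columns symmetry between the two halves of the argument organizes this cleanly.
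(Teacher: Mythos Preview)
Your proof is correct. The paper does not provide its own argument for this lemma at all; it simply attributes the result to Linton [Li, pp.~77--78] and moves on. So there is no ``paper's proof'' to compare against---you have supplied a proof where the authors chose to cite one.

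One small remark: in the direction $(3)\Rightarrow(1)$ you verify the universal property of $f_3$ but do not explicitly check that $f_3$ coequalizes $f_1,f_2$ in the first place, i.e.\ that $f_3f_1=f_3f_2$. This is of course immediate by the same epi argument you use elsewhere: from $f_i\alpha_3=\beta_3g_i$ and $f_3\beta_3=\gamma_3g_3$ one gets $f_3f_i\alpha_3=\gamma_3g_3g_i$, which is independent of $i$ since the middle row is a coequalizer, and then $\alpha_3$ epi gives $f_3f_1=f_3f_2$. You clearly have this manoeuvre in hand, but it is worth stating for completeness.
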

\qed

\begin{prop}\label{propcoeqmain}
The category $\Cat_{dgwu}(\k)$ has all coequalizers. 
\end{prop}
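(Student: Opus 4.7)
The plan is to apply the $3\times 3$-lemma (Lemma \ref{lemma3x3}) in order to reduce a general parallel pair $F_0, G_0\colon A\rightrightarrows B$ in $\Cat_{dgwu}(\k)$ to a good reflexive one, which is then handled by Proposition \ref{proprefl}. Explicitly, I would form the diagram
\[
\xymatrix@C=18pt@R=20pt{
FUFUA\ar@<0.5ex>[r]^{\varepsilon_{FUA}}\ar@<-0.5ex>[r]_{FU\varepsilon_A}\ar@<-0.5ex>[d]_{FUFUF_0}\ar@<0.5ex>[d]^{FUFUG_0}&FUA\ar[r]^{\varepsilon_A}\ar@<-0.5ex>[d]_{FUF_0}\ar@<0.5ex>[d]^{FUG_0}&A\ar@<-0.5ex>[d]_{F_0}\ar@<0.5ex>[d]^{G_0}\\
FUFUB\ar@<0.5ex>[r]\ar@<-0.5ex>[r]\ar[d]_{\alpha_3}&FUB\ar[r]^{\varepsilon_B}\ar[d]_{\beta_3}&B\ar@{.>}[d]^{\gamma_3}\\
A_3\ar@<0.5ex>[r]^{f_1}\ar@<-0.5ex>[r]_{f_2}&B_3\ar@{.>}[r]^{f_3}&C_3
}
\]
whose top and middle rows are the canonical resolutions attached to the free-forgetful adjunction $F\dashv U$ of Proposition \ref{propmoncatwu}, and whose left and middle columns are coequalizers of parallel pairs between free weakly unital dg categories.

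First I would verify all the hypotheses of Lemma \ref{lemma3x3} except for the bottom row. The top and middle rows are good reflexive (both counit components are identities on objects, and $F\eta_{UA}$, $F\eta_{UB}$ provide splittings by the triangle identities), so by Proposition \ref{proprefl} they are coequalizers with values $\varepsilon_A,\varepsilon_B$. The left and middle columns are coequalizers in $\Cat_{dgwu}(\k)$ because $F$, as a left adjoint, preserves colimits and $\Graphs_{\dgu}(\k)$ has all coequalizers by Section \ref{subsectioncoeqgra}. Commutativity of every square in the upper $2\times 3$-block follows from naturality of $\varepsilon$; this also lets one define $f_1,f_2$ via the universal property of $\alpha_3$, since $\beta_3\circ\varepsilon_{FUB}$ and $\beta_3\circ FU\varepsilon_B$ each coequalize $(FUFUF_0, FUFUG_0)$ by naturality.

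The crucial step is to verify the bottom row is itself good and reflexive, so that Proposition \ref{proprefl} produces $C_3$. Goodness: both $A_3$ and $B_3$ carry the object set $\Ob(B)/\!\sim$, with $\sim$ generated by $F_0(x)\sim G_0(x)$ for $x\in\Ob(A)$, and the induced $f_1, f_2$ are identities on objects because the two counit components are. For reflexivity, the naturality chase
\[
\alpha_3\circ F\eta_{UB}\circ FUF_0=\alpha_3\circ FUFUF_0\circ F\eta_{UA}=\alpha_3\circ FUFUG_0\circ F\eta_{UA}=\alpha_3\circ F\eta_{UB}\circ FUG_0
\]
shows that $\alpha_3\circ F\eta_{UB}$ factors as $h\circ\beta_3$ through $\beta_3$, and $h$ splits both $f_i$ using $f_i\circ\alpha_3=\beta_3\circ g_i$ for $g_i\in\{\varepsilon_{FUB},FU\varepsilon_B\}$, together with the triangle identity $g_i\circ F\eta_{UB}=\id_{FUB}$ and the fact that $\beta_3$ is epi. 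Once $C_3$ is produced by Proposition \ref{proprefl}, Lemma \ref{lemma3x3} makes the right column a coequalizer, and the naturality relation $\varepsilon_B\circ FUF_0=F_0\circ\varepsilon_A$ (and likewise for $G_0$) identifies its upper pair with $F_0,G_0$. I expect the main obstacle to be the naturality chase establishing reflexivity of the bottom row; the other verifications are routine diagram tracking.
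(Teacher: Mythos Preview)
Your argument is correct and follows the same Wolff-style $3\times 3$ strategy as the paper, with the minor cosmetic difference that your diagram is the transpose of the paper's: the paper puts the original pair $(H_1,H_2)$ and its $FU$- and $FUFU$-images along the \emph{rows} and the resolution maps $(\varepsilon_{FU-},FU\varepsilon_-)$ along the \emph{columns}, so it applies Proposition \ref{proprefl} to the induced rightmost column and then invokes Lemma \ref{lemma3x3} to conclude that the bottom row is a coequalizer, whereas you swap the two roles. The reflexivity verification you flag as the ``main obstacle'' is exactly the paper's construction of $\varkappa_E$, carried out in the transposed orientation; your naturality chase and the use of both triangle identities $\varepsilon_{FUB}\circ F\eta_{UB}=\id$ and $FU\varepsilon_B\circ F\eta_{UB}=\id$ are the right ingredients.
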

\begin{proof}
Let 
\begin{equation}\label{abcoeq}
\xymatrix{
A\ar@<0.5ex>[r]^{H_1}\ar@<-0.5ex>[r]_{H_2}&B
}
\end{equation}
be two arrows in $\Cat_{dgwu}(\k)$ coequalizer of which we'd like to compute. Embed it to the following solid arrow diagram, where $(F,U)$ is the adjoint pair of functors from Proposition \ref{propmoncatwu}.
\begin{equation}\label{bigdiagram}
\xymatrix{
FUFUA\ar@<0.5ex>[rr]^{FUFU(H_1)}\ar@<-0.5ex>[rr]_{FUFU(H_2)}\ar@<-0.5ex>[dd]_{\epsilon_{FUA}}\ar@<0.5ex>[dd]^{FU\epsilon_A}&&
FUFUB\ar[rr]^{F(L^\prime)}\ar@<-0.5ex>[dd]_{\epsilon_{FUB}}\ar@<0.5ex>[dd]^{FU\epsilon_B}&&FE^\prime\ar@<-0.5ex>@{.>}[dd]_{\alpha_1}\ar@<0.5ex>@{.>}[dd]^{\alpha_2}\\
\\
FUA\ar@<0.5ex>[rr]^{FU(H_1)}\ar@<-0.5ex>[rr]_{FU(H_2)}\ar[dd]_{\epsilon_A}&&FUB\ar[rr]^{F(L)}\ar[dd]_{\epsilon_B}&&FE\ar@{.>}[dd]_{p}\\
\\
A\ar@<0.5ex>[rr]^{H_1}\ar@<-0.5ex>[rr]_{H_2}&&B\ar@{.>}[rr]^{q}&&X
}
\end{equation}
The upper and the middle rows are obtained from \eqref{abcoeq} by application of $FUFU$ and $FU$, correspondingly. Denote by $E$ the coequalizer of $(UH_1,UH_2)$ in $\Graphs_{\dgu}(\k)$, and by $E^\prime$ the coequalizer of $(UFUH_1,UFUH_2)$ in $\Graphs_{\dgu}(\k)$. As $F$ is left adjoint, $FE$ and $FE^\prime$ are the coequalizers of $(FUH_1,FUH_2)$ and $(FUFUH_1,FUFUH_2)$ in $\Cat_{\dgwu}(\k)$, correspondingly. Therefore, the upper and the middle rows of \eqref{bigdiagram} are coequalizers.

The leftmost and the middle columns fulfil the assumptions of Proposition \ref{proprefl}. Indeed, the upper pairs of arrows are reflexive, by the second case of Lemma \ref{lemmasplitrefl}, see \eqref{moncase2}. Therefore, these columns are coequalizers, by Proposition \ref{proprefl}. 

The dotted arrows $\alpha_1,\alpha_2$ are constructed as follows. For $\alpha_1$, consider the map 
$$
F(L)\circ \epsilon_{FUB}\colon   FUFUB\to FE
$$
The two compositions 
$$
FUFUA\overset{FUFUH_1}{\underset{FUFUH_2}{\rightrightarrows}} FUFUB\xrightarrow{F(L)\circ \epsilon_{FUB}}FE
$$
are equal, which gives rise to a unique map $\alpha_1\colon FE^\prime\to FE$.

Similarly, taking $FU\epsilon_B$ instead of $\epsilon_{FUB}$, one gets a unique map $\alpha_2\colon FE^\prime\to FE$, which coequalizes the corresponding two arrows. 

We claim that the pair $(\alpha_1,\alpha_2)$ is reflexive. We construct $\varkappa_E\colon FE\to FE^\prime$ such that
$\alpha_1\circ\varkappa_E=\alpha_2\circ \varkappa_E=\id_{FE}$.

Recall $\varkappa_A\colon FUA\to FUFUA$ and $\varkappa_B\colon FUB\to FUFUB$ given as in \eqref{moncase2}:
$$
\varkappa_A=F\eta_{UA},\ \ \varkappa_B=F\eta_{UB}
$$
These maps are sections of the corresponding pairs of maps, which make them reflexive pairs, see Lemma \ref{lemmasplitrefl}.
Consider
$$
F(L^\prime)\circ \varkappa_B\colon FUB\to FE^\prime
$$
The two maps 
$$
FUA\rightrightarrows FUB\xrightarrow{F(L^\prime)\circ \varkappa_B} FE^\prime
$$
are equal, which gives rise to a unique map
$$
\varkappa_E\colon FE\to FE^\prime
$$
A simple diagram chasing shows that $\alpha_1\circ \varkappa_E=\alpha_2\circ\varkappa_E=\id_{FE}$. 

One has $\Ob (FE)=\Ob(FE^\prime)$, and Proposition \ref{propcoeqmain} is applied. We get an arrow $p\colon FE\to X$ which is a coequalizer of $(\alpha_1,\alpha_2)$. 

Finally, we have to construct an arrow $q\colon B\to X$ making the square in the lower right corner commutative. To this end, consider 
$p\circ F(L)\colon FUB\to X$. The two compositions
$$
FUFUB\rightrightarrows FUB\xrightarrow{p\circ F(L)} X
$$
are equal, which gives a unique map $q\colon B\to X$. One checks that the lower right square commutes.

One makes use of Lemma \ref{lemma3x3} to conclude that the bottom row is a coequalizer. 

\end{proof}

We have already seen in Section \ref{limcolimfirst} that the products, the coproducts, and the equalizers  in $\Cat_{\dgwu}(\k)$ are constructed straightforwardly. Then Proposition \ref{propcoeqmain}, and the classic result [R, Th. 3.4.11] give:
\begin{theorem}\label{cococo}
The category $\Cat_{\dgwu}(\k)$ is small complete and small cocomplete. 
\end{theorem}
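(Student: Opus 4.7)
The plan is to assemble the conclusion from the four basic pieces that have already been built in Section \ref{limcolimfirst} and in Proposition \ref{propcoeqmain}, and then invoke the standard result that having (co)products and (co)equalizers suffices for all small (co)limits.

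First, I would recall from Section \ref{limcolimfirst} that arbitrary small products and coproducts in $\Cat_\dgwu(\k)$ are constructed componentwise on objects and morphism complexes, with the $A_\infty$ functor $p$ being induced factorwise from the $p$'s of the factors. The equalizer of a parallel pair $F,G\colon C\to D$ was also built there: its set of objects is $\{X\in\Ob(C)\mid F(X)=G(X)\}$, its hom-complex between $X,Y$ is $\{f\in C(X,Y)\mid F(f)=G(f)\}$, and the Taylor components $p_n$ restrict to this subgraph because of \eqref{eq10}, as verified in \eqref{eq11}. So products, coproducts, and equalizers are in hand.

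Next, Proposition \ref{propcoeqmain} supplies the existence of coequalizers of arbitrary parallel pairs in $\Cat_\dgwu(\k)$; this is the only non-routine ingredient and was the main obstacle, but it has already been overcome via the monadic construction, using the 3x3-lemma \ref{lemma3x3}, the fact that the canonical pair \eqref{moncase2} is reflexive (Lemma \ref{lemmasplitrefl}), and the creation-of-coequalizers result for good reflexive pairs (Proposition \ref{proprefl}).

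Finally, I would invoke the classical theorem (cited as [R, Th.~3.4.11]): a category admits all small limits if and only if it admits all small products and all equalizers of parallel pairs, and dually for colimits. Applied to $\Cat_\dgwu(\k)$, with products and equalizers given by Section \ref{limcolimfirst} and coproducts and coequalizers given by Section \ref{limcolimfirst} together with Proposition \ref{propcoeqmain}, this yields that $\Cat_\dgwu(\k)$ is small complete and small cocomplete, as required. The proof of the theorem itself is thus a one-line assembly; all the substantive work sits in Proposition \ref{propcoeqmain}.
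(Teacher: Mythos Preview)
Your proposal is correct and matches the paper's own proof essentially line for line: the paper also simply recalls that products, coproducts, and equalizers were constructed in Section \ref{limcolimfirst}, cites Proposition \ref{propcoeqmain} for coequalizers, and then invokes [R, Th.~3.4.11] to conclude. As you correctly note, the theorem is a one-line assembly with all the real work already done in Proposition \ref{propcoeqmain}.
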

\qed

\subsubsection{\sc The monadicity}
Although  we will not be using the following result in this paper, it may have an independent interest. The argument is close to [Wo, Th. 2.13].
\begin{theorem}\label{wumonadic}
The adjunction
$$
F\colon \Graphs_{\dgu}(\k)\rightleftarrows \Cat_{dgwu}(\k)\colon U
$$
is monadic.
\end{theorem}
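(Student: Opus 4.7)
The plan is to apply the crude monadicity theorem (Theorem \ref{montheorem2}) to the adjunction $(F,U)$ of Proposition \ref{propmoncatwu}. Condition (1) — existence of coequalizers of reflexive pairs in $\Cat_{\dgwu}(\k)$ — is Proposition \ref{propcoeqmain}. Condition (3), that $U$ reflects isomorphisms, is immediate: if $U(F)$ is an iso of graphs then $F$ is bijective on objects and iso on each hom-complex, and the inverse $F^{-1}$ is automatically weakly unital since \eqref{eq9} and \eqref{mapwu} for $F$ yield the same relations for $F^{-1}$.

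For condition (2), that $U$ preserves reflexive coequalizers, the plan is to take a reflexive pair $H_1,H_2\colon A\rightrightarrows B$ in $\Cat_{\dgwu}(\k)$ with section $H$, form its coequalizer $q\colon B\to X$ via the construction of Proposition \ref{propcoeqmain} (diagram \eqref{bigdiagram}), and apply $U$ to the whole diagram. In the $U$-image of \eqref{bigdiagram} the leftmost and middle columns become split coequalizers in $\Graphs_{\dgu}(\k)$ by Lemma \ref{lemmasplitrefl} combined with \eqref{moncase1}, hence are coequalizers by Lemma \ref{lemmaac}; the rightmost column is a coequalizer because $(\alpha_1,\alpha_2)$ is good and reflexive, so Proposition \ref{proprefl} says $U$ strictly creates its coequalizer. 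Once the top two rows of the $U$-image are shown to be coequalizers as well, Lemma \ref{lemma3x3} yields that the bottom row $U(A)\rightrightarrows U(B)\to U(X)$ is a coequalizer, which is condition (2).

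The remaining ingredient is the claim that the monad $T=UF$ on $\Graphs_{\dgu}(\k)$ preserves coequalizers of reflexive pairs: the middle and top rows of the $U$-image of \eqref{bigdiagram} are then exactly $T$ applied to the reflexive coequalizers $UA\rightrightarrows UB\to E$ and $TUA\rightrightarrows TUB\to E'$ (with sections $UH$ and $TUH$). This preservation is read off the explicit formula
\begin{equation*}
T(\Gamma)(x,y)=\bigoplus_{n\ge 0}\mathcal{O}(n)\otimes\bigoplus_{\substack{c\in\Gamma_n\\ x_0(c)=x,\,x_n(c)=y}}\Gamma(x_0,x_1)_+\otimes\cdots\otimes\Gamma(x_{n-1},x_n)_+
\end{equation*}
from the proof of Proposition \ref{propmoncatwu}: $T$ is built from coproducts and finite tensor products of hom-complexes indexed by chains of vertices, and these operations preserve sifted colimits in $\Vect_\dg(\k)$, reflexive coequalizers being sifted. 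The main obstacle in making this rigorous lies in the bookkeeping under the quotient of vertex sets in a reflexive coequalizer: chains in the coequalizer graph must be matched to chains in the source graph; surjectivity of the quotient on vertices together with commutation of $\bigoplus$ and $\otimes$ with sifted colimits makes this work.
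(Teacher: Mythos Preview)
Your route via the crude monadicity theorem (Theorem \ref{montheorem2}) is different from the paper's, which uses Beck's precise theorem (Theorem \ref{montheorem1}) with $U$-split pairs rather than reflexive pairs. Both approaches share the same treatment of the columns in the $U$-image of diagram \eqref{bigdiagram}: the left and middle columns become split via \eqref{moncase1}, and the right column is preserved by $U$ because $(\alpha_1,\alpha_2)$ is good and reflexive, so Proposition \ref{proprefl} applies. The divergence is entirely in how the top two rows are handled.

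The paper's argument is this: for a $U$-split pair, $UA\rightrightarrows UB\to E$ is a \emph{split} coequalizer in $\Graphs_\dgu(\k)$; applying $F$ and $FUF$ to this split diagram yields split---hence absolute (Lemma \ref{lemmaac})---coequalizers in the middle and top rows. After applying $U$ these rows remain coequalizers for free, with no properties of $T$ beyond functoriality invoked. By contrast, your approach hinges on the claim that $T=UF$ preserves reflexive coequalizers of graphs, which you yourself flag as ``the main obstacle'' and do not prove.

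That claim is true, but it is not as soft as your sketch suggests. The issue is not merely that $\bigoplus$ and $\otimes$ commute with sifted colimits in $\Vect_\dg(\k)$: the indexing set of the outer coproduct in the formula for $T(\Gamma)(x,y)$ is the set of length-$n$ chains in $V_\Gamma$, so one must first show that $\Gamma\mapsto V_\Gamma^{\,n+1}$ (as a functor to $\Sets$) preserves reflexive coequalizers---this uses that reflexive coequalizers commute with finite products in $\Sets$, a fact you do not mention---and then match the colimit of tensor products indexed by chains in $\Gamma_2$ with the tensor product of colimits indexed by chains of equivalence classes. None of this is carried out. The paper's choice of $U$-split pairs sidesteps the whole analysis: absoluteness of split coequalizers replaces any structural study of $T$. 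So while your strategy can be completed, as written it has a genuine gap at precisely the point you identified, and the paper's approach is both shorter and avoids the bookkeeping entirely.
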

\begin{proof}
We deduce the statement from the Beck Monadicity Theorem \ref{montheorem1},
for which we have to prove that the assumptions in (1)-(3) in Theorem \ref{montheorem1} hold.

(1) has been proven in Proposition \ref{propcoeqmain}, by which $\Cat_{dgwu}(\k)$ has all coequalizers, and (3) is clear. 
One has to prove (2), that is, that the functor $U\colon \Cat_{dgwu}(\k)\to\Graphs_{\dgu}(\k)$ preserves all $U$-split coequalizers. We make use of Lemma \ref{lemma3x3}, once again.

Let a pair of arrows in $\Cat_\dgwu(\k)$ 
\begin{equation}
A\overset{H_1}{\underset{H_2}{\rightrightarrows}}B
\end{equation}
be $U$-split. 
Then
\begin{equation}\label{eqsplitagain}
UA\overset{UH_1}{\underset{UH_2}{\rightrightarrows}}UB\xrightarrow{L}E
\end{equation}
is a split coequalizer, for some $L$ and $E$. 
The upper and the middle rows in \eqref{bigdiagram} are defined now as the result of application of $FUF$ and $F$, correspondingly, to \eqref{eqsplitagain}. (In particular, now $E^\prime=UF(E)$, $L^\prime=UF(L)$). Therefore, the upper and the middle rows are split, and, therefore, {\it absolute} coequalizers, by Lemma \ref{lemmaac}.

Then we get the dotted arrows in \eqref{bigdiagram}, and construct $X$, as in the proof of Proposition \ref{propcoeqmain}. In particular, we get a coequalizer 
\begin{equation}
A\overset{H_1}{\underset{H_2}{\rightrightarrows}}B\xrightarrow{q}X
\end{equation}
at the bottom row of \eqref{bigdiagram}.
One has to prove that $UX\simeq E$. 

In the obtained diagram all columns and  two upper rows are split coequalizers, but the bottom row is also a coequalizer but possibly not split. Now apply the functor $U$ to the whole diagram. As split coequalizers are absolute, by Lemma \ref{lemmaac}, the upper two rows and all three columns remain coequalizers. Therefore, by Lemma \ref{lemma3x3}, the bottom row also remains a coequalizer, after application of the functor $U$.

\end{proof}

\comment

\section{\sc The cohomology of the dg operad $\mathcal{O}$}

\subsection{\sc }
Let $\Fin$ be the non-$\Sigma$ operad with $\Fin(n)=\k$ for any $n\ge 0$, with all compositions equal to the identity maps $\k\to\k$.
Let $\mathcal{O}$ be the non-$\Sigma$ operad defined in Section \ref{subsectionmonadwu}. There is a dg operad map $p\colon \mathcal{O}\to\Fin$, sending all operations $p_\ell(-)$ to 0.

We prove here
\begin{theorem}\label{theoracycl}
For any $n\ge 0$, the dg operad map $p$ defines a quasi-isomorphism $p(n)\colon \mathcal{O}(n)\to\Fin(n)$.
\end{theorem}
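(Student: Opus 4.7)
The plan is a spectral sequence argument. First, I would realise $\mathcal{O}(n)$ concretely as a cochain complex spanned by reduced planar trees with $n$ labelled input leaves. Internal nodes are either $k$-ary $m$-vertices (via associativity (i)) or generators $p_{n'; i_1, \dots, i_k}$ with $n' \geq 2$ and $1 \leq k \leq n'-1$ (the extreme cases $k=0$ and $k=n'$ being killed by (iii)), while $0$-ary nodes are either units $j$ or labelled inputs. Relations (ii) and (iv) give explicit combinatorial identifications that allow a choice of a graded basis.

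Define $N(T) := \sum_{v} n'_v$, summed over the $p$-vertices of $T$ with $n'_v \geq 2$, and set $F_s \mathcal{O}(n) := \mathrm{span}\{T : N(T) \leq s\}$. Inspection of (v) shows that $d$ preserves this filtration: the ``$m$-splitting'' terms $m \circ (p_{\ell; \cdot}, p_{n-\ell; \cdot})$ with $\ell, n-\ell \geq 2$ preserve $N$ exactly, while the ``boundary'' terms (with $\ell = 1$ or $n-\ell = 1$) and the ``input-merge'' terms $p_{n-1}(\ldots, m(a(r), a(r+1)), \ldots)$ -- interpreted using the strict unit property of $A \oplus \k_A$, so that $m(j, \id) = m(\id, j) = \id$ -- strictly decrease $N$. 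Consequently $E_0$ carries only the bar-like $m$-splitting differential at each $p$-vertex with $n' \geq 3$: $p_{n'; I} \mapsto \sum_{2 \leq \ell \leq n'-2} \pm\, m(p_{\ell; I_L}, p_{n'-\ell; I_R-\ell})$, with $I_L, I_R$ the obvious splits of the unit-index set $I$.

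This $E_0$-complex factorises vertex-by-vertex into truncated coassociative bar complexes on the symbols $\{p_{n'; i_1, \dots, i_k}\}_{n' \geq 2,\, 1 \leq k \leq n'-1}$, which I would identify with the reduced cellular cochain complex of an appropriately truncated simplex. A standard contracting homotopy shows its cohomology is concentrated on the two extreme classes $p_{2;1}$ and $p_{2;2}$. Hence $E_1$ is built from trees whose $p$-vertices are only of type $p_{2;1}$ or $p_{2;2}$, and the induced $d_1, d_2$ realise the fundamental unit-homotopies $d p_{2;1} \propto m(j, \id) - \id$ and $d p_{2;2} \propto m(\id, j) - \id$. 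Iterating, these contract the sub-operad $\mathcal{M} = \langle m, j\rangle \subset \mathcal{O}$ onto the single class $[m^{n-1}]$ (or $[j]$ for $n=0$), yielding $E_\infty = \Fin(n) = \k$ concentrated in degree $0$. Convergence is automatic: $N(T) \geq 0$ bounds the filtration below, and the cohomological degree on $\mathcal{O}(n)$ is bounded above by $0$.

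The main obstacle I expect is the explicit verification that the truncated coassociative bar complex at $E_0$ is acyclic apart from the two extreme classes $p_{2;1}$ and $p_{2;2}$. Tracking the signs coming from (v) throughout the spectral sequence, and ensuring that the iterated contractions induced by $p_{2;1}, p_{2;2}$ on the later pages correctly quotient all of $\mathcal{M}(n)$ onto $[m^{n-1}]$ rather than producing spurious higher-page differentials, is the delicate combinatorial core the paper refers to as ``tricky''.
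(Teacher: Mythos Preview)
There is a genuine gap in your $E_0$ computation. With your filtration $N(T)=\sum_v n'_v$, the only piece of the differential surviving on $E_0$ is the splitting $p_{n';I}\mapsto\sum_{2\le\ell\le n'-2}\pm\,m(p_{\ell;I_L},p_{n'-\ell;I_R})$. But this truncated bar differential is far from acyclic. Consider any $p_{n';I}$ of operadic arity $1$, i.e.\ $|I|=n'-1$. In every splitting the two arities must sum to $1$, so one factor has arity $0$, meaning all its positions are units; by relation~(iii) that factor vanishes. Hence $d_0(p_{n';I})=0$ for \emph{every} $n'\ge 2$ with $|I|=n'-1$, and none of these generators is hit by $d_0$ either. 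Thus all of
\[
p_{2;1},\ p_{2;2},\ p_{3;1,2},\ p_{3;1,3},\ p_{3;2,3},\ p_{4;1,2,3},\ \dots
\]
survive to $E_1$, contradicting your claim that only $p_{2;1}$ and $p_{2;2}$ remain. The truncation that kills $|I|=0$ and $|I|=n'$ is precisely what destroys the contracting homotopy of the full bar complex.

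The paper's filtration is instead by $\sharp(T)-\sharp_p(T)$, where $\sharp(T)$ counts operadic inputs across all $p$-vertices and $\sharp_p(T)$ counts $p$-vertices having at least one operadic input. The key feature of this choice is that the input-merge terms $p_{n'}(\dots,1,f,\dots)\rightsquigarrow p_{n'-1}(\dots,f,\dots)$ and the boundary splittings that shed a factor $p_i(1,\dots,1)$ or $j$ \emph{preserve} $\sharp-\sharp_p$, so they remain part of $d_0$. These are exactly the terms your filtration pushes to later pages, and it is they that assemble into the explicitly contractible Type~I/II/III complexes of the paper, after which $E_1$ collapses. With your $N$, those contractions would be spread over infinitely many pages and your outline provides no mechanism to control them. (A small side remark: the identity ``$m(j,\id)=\id$'' is not literally true in $\mathcal{O}$---indeed $dp_{2;1}=\pm(m(j,\id)-\id)$---though your intended meaning, that $1\cdot f=f$ in $A\oplus\k_A$ governs the input-merge part of the differential, is correct.)
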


The proof is divided into several steps.

{\it In Step 1}, we consider a bigger operad $\hat{\mathcal{O}}$ endowed with a map of dg operads $P\colon \hat{\mathcal{O}}\to\mathcal{O}$, and prove that this map is a quasi-isomorphism.  Still we impose the relation $p_{n;i_1,\dots,i_k}=0$ for $k=0$. It is a dg operad generated by $m$, $j$, $p_{n;i_1,\dots,i_k}$, having the same arity and degree as for the operad $\mathcal{O}$ (see Section \ref{subsectionmonadwu}), with the following distinction: 
\begin{equation}
\text{ We do {\it not} impose the relations $p_{n;1,2,3,\dots,n}=0$ and $m\circ (j,j)=j$}
\end{equation}
The differentials $dj=0$, $dm=0$, and $dp_{n;i_1,\dots,i_k}$ is given by \eqref{relop2}.

The dg operad $\mathcal{O}$ is the quotient dg operad of the dg operad $\hat{\mathcal{O}}$ by the dg operadic ideal $I$ generated by the elements $\{p_{n;1,2,3,\dots,n}\}_{n\ge 2}$.

Note that 
\begin{equation}
dp_{2;1,2}=p_{1;1}-m(p_{1;1},p_{1;1})=j-m(j,j)
\end{equation}
Therefore, one has $j=m(j,j)$ in the quotient-operad. Denote by $P\colon \hat{\mathcal{O}}\to\mathcal{O}$ the quotient-map.

We prove
\begin{prop}\label{propop1}
The map of dg operads $P\colon \hat{\mathcal{O}}\to\mathcal{O}$ is a quasi-isomorphisms of dg operads.
\end{prop}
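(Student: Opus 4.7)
The strategy is to show that the kernel $I := \ker P$ is acyclic in every arity; then the short exact sequence of dg operads $0 \to I \to \hat{\mathcal{O}} \xrightarrow{P} \mathcal{O} \to 0$ gives, via the long exact cohomology sequence in each arity, that $P$ is a quasi-isomorphism. By construction $I$ is the smallest dg operadic ideal of $\hat{\mathcal{O}}$ containing the elements $q_n := p_{n;1,2,\dots,n}$ for $n \ge 2$; by the graded Leibniz rule it coincides with the plain operadic ideal generated by $\{q_n, dq_n : n \ge 2\}$, and in particular $j - m(j,j) = \pm dq_2$ lies in $I$.

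To prove acyclicity of $I(n)$, I would introduce a bounded increasing filtration $F^\bullet I(n)$ by a weight statistic on tree monomials, for instance the total number of vertices of $\hat{\mathcal{O}}$ decorated by generators $q_k$ (equivalently, by the fragment $m(j,j)$, after passing to a suitable normal form). On each arity the filtration is finite because a tree monomial of fixed arity has bounded size. The key claim is that on the associated graded $E_0$, the formula \eqref{relop2} pairs each $q_k$-decorated vertex bijectively with a leading summand of its own differential, so that each such vertex contributes an elementary acyclic two-term subcomplex $\k\{q_k\} \xrightarrow{\cong} \k\{(dq_k)_{\mathrm{lead}}\}$, while the remaining summands of $dq_k$ strictly drop in filtration degree. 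Assembling these elementary acyclicities across the tree structure forces the $E_1$-page restricted to $I(n)$ to vanish, and $H^\bullet(I(n)) = 0$ follows by convergence. An alternative phrasing of the same idea is to construct an explicit contracting homotopy on $I$ by selecting in each tree monomial a distinguished $q$-vertex (say, the earliest in a fixed lexicographic order on vertices) and inverting the corresponding elementary $(q_k, dq_k)$-cell.

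The principal technical hurdle is a careful combinatorial and sign bookkeeping: tree monomials in $I(n)$ can contain several $q$-vertices at various internal positions, and the pairing has to respect both the operadic composition structure and the Koszul sign rule implicit in \eqref{relop2}. A preliminary step is thus to fix a PBW-type basis of tree monomials for $\hat{\mathcal{O}}$ that is compatible with the chosen weight filtration; once such a basis is in place, the remaining verification reduces to an induction on the weight together with a finite sign computation, which close the argument and establish Proposition \ref{propop1}.
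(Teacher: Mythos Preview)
Your overall strategy---reduce to acyclicity of the ideal $I$---is natural, but the concrete filtration you propose does not do what you claim. Take the simplest generator $q_2 = p_{2;1,2}$: its differential is $dq_2 = j - m(j,j)$, which contains \emph{no} $q_k$-vertex with $k\ge 2$. In your increasing filtration by $q$-count, $dq_2$ thus lies in strictly lower filtration than $q_2$, so on the associated graded $d_0(q_2)=0$; there is no ``leading summand'' to pair $q_2$ with, and the promised elementary two-term cell never materialises. More generally, $dq_n$ is a mixture of terms $m(q_\ell,q_{n-\ell})$ (which \emph{raise} the $q$-count when $2\le\ell\le n-2$), terms $m(j,q_{n-1})\pm m(q_{n-1},j)$ (same $q$-count), and terms $p_{n-1;\dots}\circ m(j,j)$ (which are not $q$'s at all, since $m(j,j)\ne j$ in $\hat{\mathcal{O}}$). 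The differential does not move monotonically with respect to your weight, and the parenthetical ``after passing to a suitable normal form'' is exactly where the real difficulty is hidden. Relatedly, $I$ is \emph{not} spanned by tree monomials containing a $q$-vertex (for instance $m(j,j)-j\in I$ has none), so the alternative ``pick the earliest $q$-vertex'' contracting homotopy is not even defined on all of $I$.

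The paper does not establish the proposition by attacking $I$ directly. In Section~\ref{subsectionterug} it explicitly remarks that the tempting K\"unneth-type decomposition $I \simeq \mathcal{O}'\circ K_3^{\udot}\circ\mathcal{O}'$ is \emph{not} compatible with the differential. Instead it computes the cohomology of $\hat{\mathcal{O}}$ (there denoted $\mathcal{O}'$) and of $\mathcal{O}$ separately, each via the filtration by $\sharp(T)-\sharp_p(T)$ and the spectral sequence of Lemma~\ref{lemmacomp1}; the key step (Lemma~\ref{lemmacomp2}) identifies the relevant piece of $E_0$ with the cobar complex of the cofree coalgebra on $\k[1]$. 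Both operads turn out quasi-isomorphic to $\Assoc_+$, and the fact that $P$ is a quasi-isomorphism follows from the resulting roof diagram rather than from any direct analysis of $I$.
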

The proof is given in ??? below.

{\it In Step 2,} we consider the natural projection $P^\prime\colon \hat{\mathcal{O}}\to\Fin$ which sends all $p_{n;i_1,\dots,i_k}$, $n\ge 2$, to 0. We prove
\begin{prop}\label{propop2}
The map of dg operads $P^\prime\colon \hat{\mathcal{O}}\to\Fin$ is a quasi-isomorphism.
\end{prop}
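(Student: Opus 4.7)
The projection $P'$ sends $m$ to $m$, $j$ to $1_{\Fin(0)}$, and kills every $p_{n;i_1,\dots,i_k}$ with $n\ge 2$. Since $\hat{\mathcal{O}}$, as a \emph{graded} non-$\Sigma$ operad, is freely generated by $m$, $j$ and the higher $p_{n;I}$'s modulo only the associativity of $m$, there is an evident graded-operad section $\iota\colon \Fin \to \hat{\mathcal{O}}$ sending the arity-$n$ generator to any fixed $n$-fold iterated $m$-product and the arity-$0$ generator to $j$. This splits $\hat{\mathcal{O}}(n) = \iota(\Fin(n)) \oplus K(n)$ as graded vector spaces, where $K = \ker P'$ is spanned by decorated planar trees containing at least one vertex labelled by a ``high'' $p_{n';I}$ (i.e.\ $n'\ge 2$). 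It suffices to show $H^*(K(n)) = 0$ in every arity.

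Introduce a weight grading on $\hat{\mathcal{O}}(n)$ by $w(T) = \sum_v (n'(v)-1)$, summed over vertices $v$ labelled $p_{n'(v);I(v)}$ with $n'(v)\ge 2$. A direct inspection of \eqref{relop2} shows that $d$ has pure bidegree $(-1,+1)$ with respect to $(w,\deg)$: both the quadratic summand $m\circ(p_\ell,p_{n-\ell})$ and the inner-composition summand $p_{n-1;J}\circ(\id,\dots,m(a,a'),\dots,\id)$ carry weight $n-2$, one less than the weight $n-1$ of $p_{n;I}$, irrespective of whether $a(r),a(r+1)$ equal $\id$ or $j$ (recall $w(j)=0$). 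Consequently $K(n)$ decomposes as a direct sum of finite chain complexes indexed by $w+\deg$, and it is enough to exhibit a degree $-1$ contracting homotopy $h$ on $K$ that raises $w$ by exactly one.

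The candidate is the ``append a terminal $1$'' operator, defined on generators by $h(p_{n;I}) = \pm p_{n+1;\,I\cup\{n+1\}}$ (with the standard Koszul sign) and $h(m)=h(j)=0$, then extended by the Leibniz rule across operadic/tree compositions. Computing $(dh+hd)(p_{n;I})$ via \eqref{relop2}, the only contribution that does not cancel in pairs comes from the $\ell=n$ summand of $dp_{n+1;I\cup\{n+1\}}$, which produces $\pm m(p_{n;I}, j)$; all remaining quadratic and inner-composition terms in $dh(p_{n;I})$ match with the corresponding terms in $h(dp_{n;I})$ after the case split $a(r)\in\{\id,j\}$. The residual $m(p_{n;I}, j)$ is then reduced to $p_{n;I}$ by a secondary iteration of the same homotopy, which makes $j$ a homotopy unit for $m$ on $K$ (in parallel with the identity $dp_{2;1,2} = j - m(j,j)$ that generates the missing relation in the associated graded).

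\emph{Main obstacle.} The delicate point is the combinatorial matching of terms between $dh$ and $hd$ across the inner-composition sum of \eqref{relop2}: the case split $a(r)\in\{\id,j\}$, combined with the newly appended $1$ at position $n+1$, produces many terms whose signs must pair up exactly as dictated by the formula. The cleanest bookkeeping appears to come from identifying $I\subset\{1,\dots,n\}$ with a face of the cube $[0,1]^n$; under this identification $d$ becomes a cubical boundary and $h$ a cone toward the last vertex, so that $dh+hd = \id$ reduces to the contractibility of the cube. This is where the ``quite tricky'' aspect of the computation, alluded to in the introduction, is concentrated.
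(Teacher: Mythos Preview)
Your proposal has two structural problems that are not just ``the tricky combinatorics is deferred.''

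First, the weight grading is vacuous as a bookkeeping device. Since $\deg p_{n;I}=-n+1$ and you set $w(p_{n;I})=n-1$ (and $w=\deg=0$ on $m$ and $j$), every planar tree $T$ satisfies $w(T)+\deg(T)=0$. Hence the ``direct sum of finite chain complexes indexed by $w+\deg$'' is just $K(n)$ itself, and $K(n)$ is \emph{not} finite-dimensional in any arity: already in arity $0$ and degree $0$ one has $j$, $m(j,j)$, $m(m(j,j),j),\dots$, none of which are identified in $\hat{\mathcal{O}}$. So no finiteness is gained.

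Second, and more seriously, the homotopy does not do what you claim. Because you extend $h$ by the Leibniz rule, $D:=dh+hd$ is an operadic derivation; a derivation can never equal the identity on $K$, so at best you could hope $D$ agrees on generators with some invertible chain map. But even on generators your formula $D(p_{n;I})=\pm m(p_{n;I},j)$ is false. Take $n=2$, $I=\{1\}$: then $h(p_{2;\{1\}})=p_{3;\{1,3\}}$ and $h(dp_{2;\{1\}})=h(m(j,\id)-\id)=0$, so $D(p_{2;\{1\}})=dp_{3;\{1,3\}}$. Computing via \eqref{relop2} (and remembering that the ``$m$'' in the second sum is the product in $A\oplus\k_A$, so $1\cdot f=f$) one finds
\[
dp_{3;\{1,3\}}=\pm m(j,p_{2;\{2\}})\pm m(p_{2;\{1\}},j)\pm p_{2;\{2\}}\pm p_{2;\{1\}},
\]
which contains the extra terms $m(j,p_{2;\{2\}})$, $p_{2;\{2\}}$, $p_{2;\{1\}}$. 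The analogous computation for $p_{2;\{2\}}$ produces a term $m(\id,p_{2;\{1,2\}})$ involving $p_2(1,1)$, which is nonzero in $\hat{\mathcal{O}}$. So the asserted cancellation in pairs does not happen, and the ``secondary iteration'' cannot repair this: the residual is not of the form $m(p_{n;I},j)$.

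The paper's argument (carried out for the operad $\mathcal{O}'$, which is your $\hat{\mathcal{O}}$) avoids these issues by using a different and more subtle filtration. One sets $\sharp(T)=\sum_v(\text{number of operadic arguments of }v)$ over $p$-vertices, $\sharp_p(T)=\#\{p\text{-vertices with at least one operadic argument}\}$, and filters by $\sharp(T)-\sharp_p(T)$. On the associated graded the differential splits into three explicit ``Type I/II/III'' pieces whose cohomology reduces, via K\"unneth, to the acyclicity of an explicit two-periodic complex and to the cobar complex of the cofree coalgebra on $\k[1]$. This is the spectral-sequence computation referred to in the introduction; your single global homotopy does not substitute for it.
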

The proof is given in ??? below. 

{\it In Step 3,} we prove Theorem \ref{theoracycl} by considering the roof of maps
\begin{equation}
\xymatrix{
&\hat{\mathcal{O}}\ar[dl]_{P}\ar[dr]^{P^\prime}\\
\mathcal{O}&&\Fin
}
\end{equation}
each of which is a quasi-isomorphism, by Propositions \ref{propop1} and \ref{propop2}.

\subsection{\sc A proof op Proposition \ref{propop1}}
Denote 
$$
p_n(1,\dots,1):=p_{n;1,2,\dots,n}
$$
the 0-ary operation of degree $-n+1$,
and
by $m_n$ the $n$-ary operation $m\circ_1(\dots \circ_1(m\circ_1(m\circ_1 m)))$ of degree 0. 

The dg operad ideal $I$ can be written as 
\begin{equation}\label{kun1}
I=\hat{\mathcal{O}}\circ I_0\circ \hat{\mathcal{O}}
\end{equation}
where $I_0\subset I(0)$ is
the coimplex with
\begin{equation}
I_0=\bigoplus_{N\ge 1}I_{0,N}
\end{equation}
and the complex $I_{0,N}$ is
\begin{equation}
\begin{aligned}
\ &0\to \underset{\deg=-N}{\k p_{N+1}(1,\dots,1)}
\to\underset{\deg=-N+1}{\oplus_{n_1+n_2=N+1}\k m_2(p_{n_1}(1,\dots,1),p_{n_2}(1,\dots,1))}\to\dots\to \\
&\underset{\deg=0}{\k \big(m_{N+1}(p_1(1),\dots,p_1(1))-m_{N}(p_1(1),\dots,p_1(1))\big)}\to 0
\end{aligned}
\end{equation}
Clearly it is enough to prove that $I$ is acyclic in all arities. By \eqref{kun1} and the  Kunneth formula, it is enough to prove
\begin{lemma}
For any $N\ge 2$, the complex $I_{0,N}$ is acyclic in all degrees. Consequently, $I_0$ is acyclic. 
\end{lemma}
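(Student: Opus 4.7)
My plan is to identify the complex $I_{0,N}$, up to a filtration handling the correction terms, with the weight-$(N+1)$ part of the linear dual of the reduced bar complex $\Bar(T_+(\k y))$ of the free non-unital associative algebra $T_+(\k y)=\bigoplus_{n\ge 1}\k y^n$ on one generator $y$ of degree $1$. Under the pairing
$$
m_k(a_{n_1},\dots,a_{n_k})\ \longleftrightarrow\ [y^{n_1}|\cdots|y^{n_k}]^{*},\qquad a_n:=p_n(1,\dots,1),
$$
the \emph{split} differential on $I_{0,N}$, coming from the first sum of \eqref{relop2} and extended to $m_k$-products by the Leibniz rule, dualizes the \emph{merge} bar differential $[y^{n_1}|\cdots|y^{n_k}]\mapsto\sum_i(-1)^i[y^{n_1}|\cdots|y^{n_i+n_{i+1}}|\cdots|y^{n_k}]$; with the standard $A[1]$-shift in the bar construction, the cohomological degrees match on both sides as $k-(N+1)$.

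Since $\k\oplus T_+(\k y)=\k[y]$ admits the Koszul resolution $0\to\k[y]\xrightarrow{\cdot y}\k[y]\to\k\to 0$, the reduced bar $\Bar(T_+(\k y))$ computes $\Tor^{\k[y]}_{\ge 1}(\k,\k)$, which is concentrated in total weight $1$. Hence $\Bar(T_+(\k y))$, and consequently its linear dual (all weight components being finite-dimensional), is acyclic in every weight $\ge 2$; in particular at weight $N+1\ge 3$ for every $N\ge 2$.

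To handle the additional contribution to the differential coming from the second sum of \eqref{relop2}, namely the generators of the form $p_{n-1}(1,\dots,m(1,1),\dots,1)$ produced by iterated substitutions and the degree-$0$ corrective element $m_{N+1}(1,\dots,1)-m_N(1,\dots,1)$, I introduce a descending filtration $F^\bullet I_{0,N}$ by the total nesting depth of internal $m(1,1)$'s. The first sum of \eqref{relop2} preserves the filtration level while the second sum strictly increases it, so the $E_0$-differential on $\gr_F I_{0,N}$ is exactly the pure split differential matching the dual bar complex. Acyclicity of $\gr_F I_{0,N}$ then propagates, via the spectral sequence of this bounded filtration, to acyclicity of $I_{0,N}$.

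The main technical obstacle will be an honest enumeration of the full basis of $I_{0,N}$, including all nested generators produced by iterating the second-sum differential, together with tracking the Koszul signs entering the identification with the dual bar complex. A systematic combinatorial bookkeeping in terms of decorated planar trees, whose leaves are labelled by $1$ and whose internal nodes are labelled by $m$ or $p_n$, should make this precise; once it is in place, the Koszul-style acyclicity of $\gr_F I_{0,N}$ for $N\ge 2$ yields the desired conclusion.
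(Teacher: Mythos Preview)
The paper provides no proof for this lemma: it sits in a commented-out section of the source with a literally empty proof environment, and the result is later obtained by an entirely different route (the spectral sequence argument of Section~\ref{sectionmop}). So there is nothing to compare your argument against directly, but your core idea coincides with what the paper does in its actual proof: the identification of the ``split'' differential
\[
d_0(p_n)=\sum_{i=1}^{n-1}(-1)^{i-1}p_i\otimes p_{n-i}
\]
with a (co)bar-type differential and the ensuing Koszul-style acyclicity is exactly Lemma~\ref{lemmacomp2}, phrased there as the cobar complex of the cofree non-counital coalgebra on $\k[1]$ rather than as the dual of the bar complex of $T_+(\k y)$. These are the same computation.

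There is, however, a genuine gap in your treatment of the nested $m(j,j)$ contributions. You are right that the complex as written in the paper's commented text is not closed under the full differential of $\hat{\mathcal{O}}$: the second sum of \eqref{relop2} applied to $p_n(1,\dots,1)$ produces $p_{n-1}(1,\dots,m(j,j),\dots,1)$, which has one genuine operadic argument and is \emph{not} of the form $m_k(a_{n_1},\dots,a_{n_k})$. Your filtration by nesting depth is the natural fix, but your $E_0$-page is not just the dual bar complex in weight $N+1$. At nesting depth $d\ge 1$ the basis elements are $m_k(x_1,\dots,x_k)$ where each $x_i$ is a $p_{n_i}$ with a tree of nested $m(j,j)$'s in its slots; the split differential on these is a bar-type differential for a \emph{larger} graded algebra (generated not by one $y$ but by all the possible ``decorated'' entries), and you have not shown that each such graded piece is acyclic. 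Concretely, at depth $1$ you must show acyclicity of the complex spanned by $m_k(a_{n_1},\dots,a_{n_{s-1}},b_{n_s},a_{n_{s+1}},\dots)$ where $b_m$ ranges over the $m-1$ elements $p_m(1,\dots,m(j,j),\dots,1)$; this is a genuinely different computation from the depth-$0$ one. The planar-tree bookkeeping you allude to would need to supply a uniform contracting homotopy or a further Koszul identification at every depth, and that is the missing step. This difficulty is presumably why the paper abandoned the $I_{0,N}$ route and instead, in Section~\ref{subsectionterug}, reran the whole spectral sequence argument of Theorem~\ref{theoremcomp1} directly on $\mathcal{O}=\mathcal{O}'/I$, where the relation $p_n(1,\dots,1)=0$ kills the Type~II and Type~III contributions outright and no nested terms ever appear.
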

\begin{proof}
\end{proof}

\subsection{\sc A proof of Proposition \ref{propop2}}

Theorem \ref{theoracycl} is proven.

\qed

\endcomment

\comment
\section{\sc Refined weakly unital dg categories}
Here we introduce the category $\Cat^\prime_\dgwu(\k)$ of {\it refined} weakly unital dg categories. We prove that the non-symmetric dg 1-operad $\mathcal{O}^\prime$, defining the corresponding monad,  is quasi-isomorphic to the operad $\Assoc_\u$ of unital associative algebras. The dg operad $\mathcal{O}$ itself (see Section \ref{subsectionmonadwu}) is likely quasi-isomorphic to $\Assoc_\u$ as well, but it is not proven in this paper.
On the other hand, the category $\Cat_\dgwu^\prime(\k)$ of refined weakly unital dg categories is more relevant for our main applications in Section ??? than the category $\Cat_\dgwu(\k)$. 
From this viewpoint, the category $\Cat^\prime_\dgwu(\k)$ is more interesting for us. 

The operad $\mathcal{O}^\prime$ was found when we scrutinized the explicit construction of a weakly unital dg category structure on $\Cobar(\Bar(C))$, see ???. Here we mean that the relations (N1)--(N3) below hold for $\Cobar(\Bar(C))$, and we postulated them for $\Cat_\dgwu^\prime(\k)$. The main advantage of it is that, for the closed model structure on $\Cat^\prime_\dgwu(\k)$, constructed in ???, $\Cobar(\Bar(C))$ is a {\it cofibrant object} for any unital dg category $C$. This important fact fails if $\Cobar(\Bar(C))$ is considered as an object of $\Cat^\prime_\dgwu(\k)$. 

\subsection{\sc The operad $\mathcal{O}^\prime$ and refined weakly unital dg categories}
Recall the operad $\mathcal{O}$ with generators $j$ (a 0-ary operation of degree 0), $m$ (a binary operation of degree 0), and $p_{n; i_1,\dots,i_k}$ (an $(n-k)$-ary operation of degree $-n+1$) with the relation and the differential listed in Section \ref{subsectionmonadwu}.

Impose two more sets of relations on the generators:
\begin{itemize}
\item[(N1)] 
\begin{equation}
p_{n_1;i_1,\dots,i_{k_1}}\circ_s p_{n_2;j_1,\dots,j_{k_2}}=
(-1)^?\ p_{n_1+n_2-1;i_1,\dots,i_t,j_1+s-1,\dots,j_{k_2}+s-1,i_{t+1}+n_2-1,\dots,i_{k_1}+n_2-1}
\end{equation}
where $n_1,n_2\ge 2$, $1\le s\le n_1$, $s\not \in\{i_1,\dots,i_k\}$, and $t$ is defined as maximal $q$ for which $i_q<s$,
\item[(N2)]
\begin{equation}
p_{n; i_1,\dots,i_k}\circ_s m=(-1)^?\ m\circ (p_{s;i_1,\dots,i_t},\  p_{n-s+1;i_{t+1}-s+1,\dots,i_k-s+1})
\end{equation}
where $n\ge 2$, $1\le s\le n$, $s\not\in\{i_1,\dots,i_k\}$, and $t$ is the maximal $q$ for which $i_q<s$.
\item[(N3)]
\begin{equation}
p_{n;i_1,\dots,i_k}\circ_sj=(-1)^?\ p_{n;i_1,\dots,i_\ell,s,i_{\ell+1},\dots,i_k}
\end{equation}
where $n\ge 2$, $k\ge 1$, $s\not\in \{i_1,\dots,i_k\}$, and $\ell$ is the maximal $q$ such that $i_\ell<s$.
\end{itemize}

\begin{lemma}\label{lemmarefined1}
The relations (N1)--(N3) agree with the differential defined in Section \ref{subsectionmonadwu}, and the corresponding graded quotient operad is in turn a dg operad. 
\end{lemma}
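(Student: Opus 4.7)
The plan is to show that the two-sided operadic ideal $J\subset\mathcal{O}$ generated by the differences $\mathrm{LHS}-\mathrm{RHS}$ of (N1)--(N3) is stable under the differential $d$ of \eqref{relop2}. Since $d$ is a graded derivation of operadic composition and $dm=dj=0$, the Leibniz rule reduces the claim to verifying, for each of (N1), (N2), and (N3), that $d(\mathrm{LHS}-\mathrm{RHS})\in J$. Once this is established, $d$ descends to the quotient as a derivation of square zero, so $\mathcal{O}$ modulo (N1)--(N3) is a dg operad.

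First I would dispose of (N3). Since $dj=0$, Leibniz gives $d(p_{n;i_1,\dots,i_k}\circ_s j)=(dp_{n;i_1,\dots,i_k})\circ_s j$, which one must match with $dp_{n;i_1,\dots,i_\ell,s,i_{\ell+1},\dots,i_k}$ via \eqref{relop2}. In the sum $\sum m\circ(p_\ell,p_{n-\ell})$ of \eqref{relop2}, inserting $j$ at position $s$ feeds it to the left or the right $p$-factor depending on $\ell$, and a single application of (N3) identifies the result with the corresponding summand in the expansion of $dp_{n;\dots,s,\dots}$. In the second sum $\sum p_{n-1;\dots}\circ(\id,\dots,m(a(r),a(r+1)),\dots,\id)$, insertion of $j$ either fills an $\id$-slot, matching directly after reindexing, or lands inside an $m$-block adjacent to a unit; in the latter case one invokes the relation $m(j,j)=j$ (part (ii) of \eqref{orel}) together with (N3). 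The argument for (N2) is analogous: since $dm=0$, one compares $(dp_{n;I})\circ_s m$ with $m\circ(dp_{s;\ldots},p_{n-s+1;\ldots})\pm m\circ(p_{s;\ldots},dp_{n-s+1;\ldots})$ by grouping the summands of $dp_{n;I}$ according to whether their $m$-split lies to the left of, to the right of, or straddles position $s$, and reorganizing the straddling case via (N2) and the associativity of $m$.

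The main obstacle is (N1), in which both factors carry nonzero differentials. Leibniz gives
\begin{equation*}
d\bigl(p_{n_1;I}\circ_s p_{n_2;J}\bigr)=(dp_{n_1;I})\circ_s p_{n_2;J}+(-1)^{n_1-1}\,p_{n_1;I}\circ_s (dp_{n_2;J}),
\end{equation*}
and each term expands through \eqref{relop2} into $m$-splits $m\circ(p,p)$ and into $p\circ m$ compositions. Repeated application of (N1), (N2), (N3) and of the associativity of $m$ reduces every such summand, modulo $J$, to one of two normal forms: either a single operation $p_{N;K}$, or an $m$-split $m\circ(p_{N_1;K_1},p_{N_2;K_2})$. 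One then verifies a term-by-term match with the expansion, again through \eqref{relop2}, of
\begin{equation*}
dp_{n_1+n_2-1;\,i_1,\dots,i_t,\,j_1+s-1,\dots,j_{k_2}+s-1,\,i_{t+1}+n_2-1,\dots,i_{k_1}+n_2-1}.
\end{equation*}
The combinatorial content of this matching is essentially forced by the intended meaning of the relations: (N1)--(N3) are precisely the identities satisfied by $p_n(\dots,1,\dots)$ in any weakly unital dg category when the $p_n$ are Taylor components of an $A_\infty$-functor, so the compatibilities must hold on the nose. The genuinely subtle point is sign-tracking, which is handled by the Koszul rule applied to the degrees $|p_{n;I}|=-n+1$, $|m|=0$, $|j|=0$ and to the insertion position $s$.
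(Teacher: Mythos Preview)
Your approach is correct and is essentially the same as the paper's: reduce to showing that $d(\mathrm{LHS})=d(\mathrm{RHS})$ for each relation, working modulo the ideal generated by (N1)--(N3), and invoke the Leibniz rule together with $dm=dj=0$ to expand both sides via \eqref{relop2}. The paper states exactly these two equalities for (N1) and (N2) and then writes that ``both equations \dots\ are proved by straightforward but quite lengthy computations. We omit these computations here''; your sketch of the term-by-term matching (and your explicit treatment of (N3), which the paper does not single out) actually goes further than what the paper records.
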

\begin{proof}
One has to prove that
\begin{equation}\label{dn1}
d\Big(p_{n_1;i_1,\dots,i_{k_1}}\circ_s p_{n_2;j_1,\dots,j_{k_2}}\Big)=(-1)^?d\Big(\ p_{n_1+n_2-1;i_1,\dots,i_t,j_1+s-1,\dots,j_{k_2}+s-1,i_{t+1}+n_2-1,\dots,i_{k_1}+n_2-1}\Big)
\end{equation}
and
\begin{equation}\label{dn2}
d\Big(p_{n; i_1,\dots,i_k}\circ_s m\Big)=(-1)^?d\Big(\ m\circ (p_{s;i_1,\dots,i_t},\  p_{n-s+1;i_{t+1}-s+1,\dots,i_k-s+1})\Big)
\end{equation}
hold in the graded quotient operad of $\mathcal{O}$ by (N1)--(N3),
where $d$ is given by \eqref{relop2} and by $dm=0$, and $d$ is extended by the Leibniz rule with respect to the operadic compositions. 

Both equations \eqref{dn1} and \eqref{dn2} are proved by straightforward but quite lengthy computations. We omit these computations here.
\end{proof}

We denote by $\mathcal{O}^\prime$ the quotient dg operad of $\mathcal{O}$ by relations (N1)--(N3). 

Define a {\it refined weakly unital dg category} as follows. 
\begin{defn}
A refined weakly unital dg category $C$ is a weakly unital dg category  (in the sense of Definition \ref{defwu}), such that the Taylor components $\{p_n\}$ of the $A_\infty$ functor $p$ obeys additionally the relations (N1)-(N3). A morphism of refined weakly unital dg categories is defined as a morphism of the underlying weakly unital dg categories.
\end{defn}
The category of refined weakly unital dg categories is denoted by $\Cat_\dgwu^\prime(\k)$.

\subsection{\sc The category $\Cat_\dgwu^\prime(\k)$ is small complete and small cocomplete}

\subsection{\sc The cohomology of $\mathcal{O}^\prime$}
We prove 
\begin{theorem}
The dg operad $\mathcal{O}^\prime$ is quasi-isomorphic to $\Assoc_\u$. 
\end{theorem}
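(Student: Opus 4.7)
The plan is to establish, for each arity $n$, that $H^*(\mathcal{O}(n))$ is concentrated in degree zero and one-dimensional, matching $\mathscr{A}ssoc_+(n)$ under the projection $p$. As signalled in the introduction, the argument is a spectral sequence computation, chosen so as to decouple the two combinatorially distinct summands of \eqref{relop2}: the splitting term $m \circ (p_\ell, p_{n-\ell})$, which turns one $p$-vertex into two, and the insertion term $p_{n-1;\vec j}(\dots,m(a(r),a(r+1)),\dots)$, which lowers the arity of a single $p$-vertex at the cost of inserting an $m$.

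I would begin by fixing a monomial basis of $\mathcal{O}(n)$ via a normal form: flatten all nested $m$-products using the associativity (relation (i) of \eqref{orel}), and then apply the relations (ii)--(iv) so that every element is a linear combination of decorated planar trees whose internal vertices are multi-ary $m_N$'s ($N \geq 2$) and non-degenerate $p_{k;\vec i}$'s (with $k \geq 2$ and $1 \leq |\vec i| \leq k-1$), and whose leaves are labelled by inputs or $j$'s. I would then introduce a filtration -- most naturally a lexicographic one on the multiset $\{(k_\alpha,|\vec i_\alpha|)\}_\alpha$ of types of $p$-vertices, or equivalently a double filtration by maximal $p$-arity followed by number of $p$-vertices. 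The point is to arrange that on the associated graded $E_0$, the induced differential is the insertion term of \eqref{relop2} acting locally on each $p$-vertex, while the splitting term only enters at a later page.

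The $E_1$-page then factors, vertex by vertex, as a tensor product of local complexes -- one per $p$-vertex -- each of which resembles a normalised bar complex for the weak-unit structure. The key claim is that each local complex is contractible, via an explicit homotopy that reabsorbs a freshly inserted $m$-factor back into an adjacent $j$-slot of the $p_{k;\vec i}$, using the base case $p_{1;1} = j$ to initialise the induction on $k$. Consequently $E_2$ is supported on $p$-free monomials -- those built only from $m$ and $j$. On this remaining subcomplex the higher differentials $d_r$ ($r \geq 2$) are induced by the splitting term, which, combined with the derived boundary $d p_{2;1} \sim m(j,\id) - \id$ and its analogues in higher arities, identifies all $m$--$j$-monomials with $m_n$ (with $j$'s filling the arity-zero slots) in cohomology, producing the single class matching $\mathscr{A}ssoc_+(n) = \k$.

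The main obstacle is the construction of the contracting homotopy at the $E_1$-stage: the insertion summand of \eqref{relop2} acts with intricate combinatorics on the relative positions of the inserted $m$ and the existing $\vec i$-indices, and defining a homotopy that is $d_0$-compatible and coherent across all $p$-vertices requires careful bookkeeping of the $\pm$-signs implicit in \eqref{relop2}. A secondary difficulty will be verifying that the higher differentials truly reduce the $m$--$j$-subcomplex down to $\k \cdot m_n$ rather than to a larger $m$--$j$-generated quotient, and that no further hidden differentials survive the spectral sequence. These two ingredients together are, I expect, the source of the ``tricky computation'' mentioned in the introduction.
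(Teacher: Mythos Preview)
Your overall strategy (spectral sequence, local acyclicity) is the paper's, but the filtration you propose is not the one that actually works, and the mismatch is where the argument breaks.

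You want to filter so that $d_0$ is precisely the insertion summand of \eqref{relop2} and the splitting summand lands on a later page. The paper does \emph{not} do this: it filters by $\sharp(T)-\sharp_p(T)$, where $\sharp(T)$ is the total number of operadic (i.e.\ non-$1$) arguments over all $p$-vertices and $\sharp_p(T)$ is the number of $p$-vertices having at least one operadic argument. With this filtration, $d_0$ is a \emph{mixture}: it keeps the insertion terms in which at least one of the two merged slots is a $1$ (Types I and II in the paper), \emph{and} it keeps the splitting terms in which one factor is a pure $p_i(1,\dots,1)$ (Types II and III), while the insertion terms merging two operadic slots and the splittings into two non-degenerate factors both drop to the next page. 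This is what makes $d_0$ act purely on the pattern of $1$'s inside each $p$-vertex while leaving the operadic inputs and the ambient $m$-tree untouched, so that the computation genuinely localises.

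By contrast, your $d_0$ (full insertion) does \emph{not} decompose as a tensor product of per-vertex complexes. The insertion $p_{n;\vec i}(\dots,S_r,S_{r+1},\dots)\mapsto p_{n-1;\vec j}(\dots,m(S_r,S_{r+1}),\dots)$ grafts a new $m$-vertex below the $p$, so the subtree rooted at the $p$-vertex changes shape; the ``local complex'' you envision has no fixed underlying set of children to tensor over. Your proposed homotopy (``reabsorb a freshly inserted $m$ into an adjacent $j$-slot'') does not undo this, because the generic insertion term involves no $j$-slot at all. This is the genuine gap.

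Two further points. First, even with the correct filtration the Type~II/III pieces (strings $p_{n_1}(1,\dots,1)\cdots p_{n_k}(1,\dots,1)$, possibly capped by a non-degenerate $p$) are not handled by the simple two-periodic acyclicity of Type~I; the paper needs a \emph{second} spectral sequence (the filtration $\Phi$ of Lemma~\ref{lemmacomp2}) and identifies the resulting complex with the cobar complex of the cofree coalgebra on $\k[1]$. Second, the paper's spectral sequence collapses already at $E_1$: the surviving classes are exactly the $m$--$j$-words, and the relation $m(j,j)=j$ (together with associativity) directly gives $\Assoc_+(N)$ in degree $0$, so no higher differentials are needed. Your plan to produce $\k\cdot m_n$ only via $d_{r\ge 2}$ is both unnecessary and, with your filtration, unworkable.
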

\begin{proof}
Any element  $\omega\in\mathcal{O}^\prime(N)$ can be written down as a linear combination of products 
$m_\ell\circ (X_1,\dots,X_\ell)$, where each $X_i$ is of either of the following types:
\begin{itemize}
\item[(a)] the 0-ary operation $j$,
\item[(b)] the 1-ary operation $\id$,
\item[(c)] $p_{n; i_1,\dots,i_k}\circ (\id,\id, j,\id,j,j,\dots)$ for $n\ge 2$ (the right-hand side factor is a string of $n-k$ elements each of which is either $j$ or $\id$).
\end{itemize}
Moreover, such presentation is unique if we require it is reduced, meaning that the only ambiguity comes from the identity
$m\circ (j,j)=j$.

We consider further reductions of such expressions, when $\omega$ is a cycle.

Define $\mathcal{L}(N)$ as the following subcomplex in $\mathcal{O}^\prime(N)$. An element in $\mathcal{L}(N)$ is a linear combination of the monomials $m_\ell\circ (Y_1,\dots,Y_\ell)$, where each $Y_i$ is of either of the following types:
\begin{itemize}
\item[(1)] the 0-ary operation $j$,
\item[(2)] the 1-ary operation $\id$,
\item[(3)] $p_{n;1,2,\dots, n-1}\circ (j)$
\item[(4)] $p_{n; 2,3,\dots,n}\circ (j)$
\end{itemize}
(for the last cases, $p_{n; 1,2,\dots,n-1}$ and $p_{n;\ 2,3,\dots,n}$ are 1-ary operations; the composition with $j$ makes them 0-ary operations).

Consider the subcomplex $\mathcal{L}_1(N)$ of $\mathcal{L}(N)$ formed by linear combinations of the monomials $m_\ell(Y_1,\dots,Y_\ell)$ as above, for which the following condition holds:
\begin{equation}\label{condextra}
\text{For any $Y_i$ of types (2)-(4) both $Y_{i-1}$ and $Y_{i+1}$ are of type (1)}
\end{equation}
(In particular, if $Y_i$ is of type (2)-(4), one has $2\le i\le \ell-1$).

One checks that both $\mathcal{L}(N)$ and $\mathcal{L}_1(N)$ are indeed complexes.

\begin{lemma}\label{klemmaproofx}
Any cycle $\omega\in\mathcal{O}^\prime(N)$ is cohomologous to a cycle in $\mathcal{L}_1(N)$, $N\ge 0$.
\end{lemma}
\begin{proof}
We prove that $\omega$ is cohomologous to both $\omega\circ_i m(\id,j)$ and $\omega\circ_i m(j,\id)$, $1\le i\le N$. To this end, consider
$\omega\circ_i p_{2; 2}$ (correspondingly, $\omega\circ_i p_{2; 1}$). 

By the Leibniz rule and the assumption $d\omega=0$, one has 
$$
d(\omega\circ_i p_{2;2}=\pm \omega\circ_i (m(\id,j)-\id)=\pm(\omega-\omega\circ_i m(\id,j))
$$
The computation for $\omega\circ_i m(j,\id)$ is similar, replacing $p_{2;2}$ by $p_{2; 1}$.

Then (N2) applied to either of $\omega\circ_i m(\id,j)$ or $\omega\circ_i m(j,\id)$ ``splits'' each monomial summand of $\omega$ into the product of two smaller monomials. 

We repeat this procedure succesively.

Similarly, one replaces each instance of $j$ by $m(j,j)$, followed by application of (N2). 

Applying these steps succesively, replace $\omega$ by a cohomologous cycle, which is a sum of monomials $m_\ell\circ (Y_1,\dots,Y_\ell)$, where each $Y_k$ is one of the types (1)-(4) listed above. At the same, \eqref{condextra} not necesaarily holds.

-------------

\end{proof}

We turn back to the proof of Theorem.
It follows from Lemma \ref{klemmaproofx} that the natural map $i_*\colon H^\udot(\mathcal{L}_1(N))\to H^\udot(\mathcal{O}^\prime(N))$ is a surjection. We prove below that 
\begin{equation}\label{eqproofkey}
H^s(\mathcal{L}_1(N))=\begin{cases}
\k&s=0\\
0&\text{otherwise}
\end{cases}
\end{equation}
It would imply that $i_*$ is a quasi-isomorphism, as well as the statement of Theorem.

It remains to prove \eqref{eqproofkey}. 

By the K\"{u}nneth formula, we have to compute the cohomology of the following complexes $K_+$ and $K_-$:
\begin{equation}
K_+:\hspace{5mm}\dots\to\k\cdot (p_{4; 1,2,3}\circ j)\to \k\cdot (p_{3; 1,2}\circ j)\to \k\cdot (p_{2;1}\circ j)\to \underset{\deg=0}{\k\cdot j}\to 0
\end{equation}
\begin{equation}
K_-:\hspace{5mm}\dots\to\k\cdot (p_{4; 2,3,4}\circ j)\to \k\cdot (p_{3; 2,3}\circ j)\to \k\cdot (p_{2;2}\circ j)\to \underset{\deg=0}{\k\cdot j}\to 0
\end{equation}
with the differential inherited by the differential in $\mathcal{O}^\prime$.

Both  complexes are isomorphic to the complex 
\begin{equation}
\dots\to\k\xrightarrow{\sim}\k\xrightarrow{0}\k\xrightarrow{\sim}\k\xrightarrow{0}\underset{\deg=0}{\k}\to 0
\end{equation}
Therefore, they are quasi-isomorphic to $\k[0]$.

It proves \eqref{eqproofkey} and Theorem.

\end{proof}
\endcomment

\section{\sc A closed model structure on $\Cat_\dgwu(\k)$}
Here we construct a cofibrantly-generated closed model structure on the category $\Cat_\dgwu(\k)$. The construction generalises the Tabuada construction [Tab] of a cofibrantly-generated closed model structure on $\Cat_\dg(\k)$. Some arguments are new, such as  Lemma \ref{klemma} and Lemma \ref{lemmafibj}.

We assume some familiarity with closed model categories, in particular with [Ho, Ch.2].

\subsection{\sc A closed model structure on $\Cat_\dgwu(\k)$}
Denote by $\Assoc_+$ the $\k$-linear operad of unital associative algebras, $\Assoc_+(n)=\k$ for any $n\ge 0$, with standard operadic compositions.

\subsubsection{\sc The statement of the result}\label{section311}
Define {\it weak equivalences} $W$ in $\Cat_{dgwu}(\k)$ as the weakly unital dg functors $F\colon C\to D$
such that the following two conditions hold:
\begin{itemize}
\item[(W1)] for any two objects $x,y\in C$, the map of complexes $C(x,y)\to D(Fx,Fy)$ is a quasi-isomorphism of complexes,
\item[(W2)] the functor $H^0(F)\colon H^0(C)\to H^0(D)$ is an equivalence of $\k$-linear categories.
\end{itemize}
Note that for a weakly unital dg category $C$, the category $H^0(C)$ is strictly unital, and the functor $H^0(F)$ is well-defined, see Lemmas \ref{lemmatriv1} and \ref{lemmatriv2}.

Define  {\it fibrations} in $\Cat_\dgwu(\k)$ as the weakly unital dg functors $F\colon C\to D$ such that the following two conditions hold:
\begin{itemize}
\item[(F1)] for any two objects $x,y\in C$, the map of complexes $C(x,y)\to D(Fx,Fy)$ is component-wise surjective,
\item[(F2)] for any $x\in C$ and a closed of degree 0 arrow $g\colon Fx\to z$ in $D$ (where $z$ is an object of $D$, a priori not necessarily in the image of $F$), such that $g$ becomes an isomorphism in $H^0(C)$, there is an object $y\in C$, and a closed degree 0 map $f\colon x\to y$ inducing an isomorphism in $H^0(D)$, and such that $F(f)=g$ (in particular, $F(y)=z$).
\end{itemize}
We denote the class of all fibrations by $\Fib$.

Define a class of weakly unital dg functors $\Surj$. A weakly unital dg functor $F\colon C\to D$ belongs to $\Surj$ if $F$ is surjective on objects, and if (F1) holds.\footnote{Note the our use of notation $\Surj$ does not coincide with the one in [Tab].}

The lemma below is standard:
\begin{lemma}\label{lemmaverys}
A weakly unital dg functor $F\colon C\to D$ belongs to $\Fib\cap W$ if and only if it belongs to $\Surj\cap(W1)$.
\end{lemma}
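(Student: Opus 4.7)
The plan is to prove the two implications separately, the forward direction being essentially immediate and the backward direction requiring a standard cycle-lifting argument.

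\textbf{Forward direction.} Suppose $F\in\Fib\cap W$. Then (F1) is built into the definition of $\Fib$, so it remains only to see $F$ is surjective on objects. Given $z\in D$, condition (W2) says $H^0(F)\colon H^0(C)\to H^0(D)$ is essentially surjective, so there exist $x\in C$ and a closed degree $0$ morphism $g\colon F(x)\to z$ in $D$ which becomes an isomorphism in $H^0(D)$. Apply (F2) to $g$ to obtain $y\in C$ with $F(y)=z$. So $F\in\Surj$ and condition (W1) is unchanged, giving $F\in\Surj\cap(W1)$.

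\textbf{Backward direction.} Suppose $F\in\Surj\cap(W1)$. We must check (F2) and (W2). For (W2): $H^0(F)$ is essentially surjective because $F$ is surjective on objects, and fully faithful because (W1) promotes the quasi-isomorphisms $C(x,y)\to D(Fx,Fy)$ to isomorphisms on $H^0$. So $H^0(F)$ is an equivalence.

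The substantive step is (F2). Given $x\in C$ and a closed degree $0$ map $g\colon F(x)\to z$ in $D$ that is an iso in $H^0(D)$, first pick $y\in C$ with $F(y)=z$ using surjectivity on objects. By (F1), the map of complexes $C(x,y)\to D(F(x),F(y))$ is degreewise surjective; its kernel $K$ fits in a short exact sequence of complexes, and by (W1) $K$ is acyclic. Choose any degree $0$ lift $f_0\in C(x,y)^0$ of $g$. Then $df_0\in K^1$ is a cycle (since $F(df_0)=dg=0$), and acyclicity of $K$ gives $k\in K^0$ with $dk=df_0$; setting $f=f_0-k$ yields a closed degree $0$ lift of $g$.

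\textbf{Checking that $f$ induces an isomorphism.} Apply the same lifting procedure to a closed degree $0$ homotopy inverse $h\colon z\to F(x)$ of $g$ to obtain a closed $f'\colon y\to x$ with $F(f')=h$. Then $F(ff')=gh$ represents $[\id_{F(y)}]$ in $H^0(D)$, and using $F(\id_y)=\id_{F(y)}$ (equation \eqref{eq9}) together with the full faithfulness of $H^0(F)$ established above, we conclude $[ff']=[\id_y]$ in $H^0(C)$; symmetrically $[f'f]=[\id_x]$. Hence $f$ is an isomorphism in $H^0(C)$ (and thus also in $H^0(D)$, as required). The only mildly tricky point is this last step, which is really just the standard principle that a fully faithful functor reflects isomorphisms in the homotopy category, applied in the weakly unital setting where $H^0$ is nevertheless strictly unital by Lemma \ref{lemmatriv1}.
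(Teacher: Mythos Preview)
Your proof is correct and the forward direction matches the paper's argument exactly. For the backward direction the paper simply writes ``It is clear that $\Surj\cap(W1)$ implies $\Fib\cap W$,'' whereas you have supplied the details --- the standard cycle-lifting argument using acyclicity of the kernel to produce a closed lift, followed by reflection of isomorphisms along the equivalence $H^0(F)$. Your last paragraph is in fact slightly more than needed: once you know $H^0(F)$ is an equivalence (or just fully faithful), the fact that $[F(f)]=[g]$ is invertible immediately forces $[f]$ to be invertible, so there is no need to lift a homotopy inverse separately.
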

\begin{proof}
It is clear that $\Surj\cap(W1)$ implies $\Fib\cap W$.
Conversely, assume $F$ obeys $\Fib\cap W$. One has to prove that $F$ is surjective on objects. From $(W2)$ we know that $H^0(F)$ is essentially surjective, that is, for any object $z$ in $D$ there is a homotopy equivalence $g\colon Fx\to z$. By (F2), there is a homotopy equivalence $f\colon x\to y$ such that $F(f)=g$. In particular, $F(y)=z$.
\end{proof}

One of our main results is:
\begin{theorem}\label{theoremmain1}
The category $\Cat_\dgwu(\k)$ admits a cofibrantly generated closed model structure whose weak equivalences and fibrations are as above, and whose sets of generating cofibrations and generating acyclic cofibrations are as it is defined in Section \ref{sectionijwu} below.
\end{theorem}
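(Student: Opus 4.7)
The strategy is to follow Tabuada's argument from [Tab] adapted to the weakly unital setting, and invoke the standard recognition theorem for cofibrantly generated model categories (e.g.\ [Ho, Theorem 2.1.19]) applied to the triple $(W, I, J)$ of Section \ref{section311}. Four hypotheses need to be verified: (a) the class $W$ satisfies the two-out-of-three property and is closed under retracts; (b) the domains of $I$ and $J$ are small relative to $I$-cell and $J$-cell respectively; (c) $I\text{-inj} = \Fib \cap W$; and (d) $J\text{-cell} \subseteq W \cap I\text{-cof}$ together with $J\text{-inj} = \Fib$. Part (a) is immediate from the cohomological definitions of (W1) and (W2) together with Lemmas \ref{lemmatriv1} and \ref{lemmatriv2}. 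For smallness, $\Cat_\dgwu(\k)$ is cocomplete by Theorem \ref{cococo}, and since the generators will be free $\mathcal{O}$-algebras on finitely-supported dg graphs, the usual $\omega$-smallness argument applies: filtered colimits can be computed on underlying dg graphs via the monad $T=UF$ of Proposition \ref{propmoncatwu}, and then transported back through the free functor $F$.

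The substantive work lies in parts (c) and (d). For (c), a direct lifting argument against the elements of $I$ (parallel to Tabuada's) identifies $I\text{-inj}$ with $\Surj \cap (W1)$, which by Lemma \ref{lemmaverys} equals $\Fib \cap W$. For (d) one first checks that every element of $J$ belongs to $I\text{-cof}$, by direct inspection of how the generators are built out of the cells in $I$. The identification $J\text{-inj} = \Fib$ is the delicate step: testing against $J$ should produce both the component-wise surjectivity (F1) and the lifting of homotopy equivalences demanded by (F2). In the weakly unital setting the naive approach fails because one cannot rely on strict units to produce the required lifts, and the correct replacement is given by Lemma \ref{lemmafibj}, which allows the (F2)-type lift to be extracted from lifting against a specific generator of $J$.

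To show that relative $J$-cell complexes are weak equivalences, the key ingredient is Lemma \ref{klemma}: Tabuada's acyclic cofibration $\mathscr{A} \to \mathscr{K}$ based on the Kontsevich dg category can be reused verbatim in the weakly unital setting, because for any closed degree zero $\xi\colon X\to Y$ in a weakly unital dg category $C$ the replacement $\xi' = \id_Y \cdot \xi \cdot \id_X$ has the same class as $\xi$ in $H^0(C)$ and satisfies $\id_Y \cdot \xi' = \xi' \cdot \id_X = \xi'$, mimicking strict unitality at the level needed for the Tabuada construction. Consequently the cohomological effect of a pushout along each generator of $J$ matches the strictly unital case, and the weak-equivalence property is then transported through transfinite composition by the usual colimit argument on Hom-complexes (Hom-complexes and cohomology both commute with the filtered colimits used in the small object argument). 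Assembling (a)–(d), the recognition theorem supplies the desired cofibrantly generated model structure with cofibrations $I\text{-cof}$ and trivial cofibrations $J\text{-cof}$.

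\emph{Main obstacle.} The hardest point is the identification $J\text{-inj} = \Fib$ together with the verification that pushouts along the Tabuada generator $\mathscr{A} \to \mathscr{K}$ remain weak equivalences in $\Cat_\dgwu(\k)$. In both places, the absence of a strict unit obstructs a direct translation of Tabuada's argument; the remedies are Lemmas \ref{klemma} and \ref{lemmafibj}, and the bulk of the technical work in proving Theorem \ref{theoremmain1} will consist in establishing and carefully applying these two lemmas.
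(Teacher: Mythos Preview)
Your overall framework matches the paper exactly: invoke [Ho, Theorem 2.1.19] for the triple $(W,I,J)$, dispose of (a) and (b) as routine, establish $I\text{-inj}=J\text{-inj}\cap W$ via Proposition~\ref{proprlp2} and Lemma~\ref{lemmaverys}, and prove $J\text{-cell}\subset W$ separately. You have also correctly located Lemmas~\ref{klemma} and~\ref{lemmafibj} as the two places where the weakly unital setting demands new work.

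However, you have the roles of these lemmas switched, and this leads to a genuine gap in your argument for $J\text{-cell}\subset W$. Lemma~\ref{klemma} (the $\xi\mapsto\xi'=1\cdot\xi\cdot 1$ trick) is \emph{not} what makes the pushout along $\kappa\colon\mathscr{A}\to\mathscr{K}$ a weak equivalence; it is an ingredient in the proof of Lemma~\ref{lemmafibj}, where one needs to produce a weakly unital dg functor $\mathscr{K}\to D$ from a given homotopy equivalence in $D$. Your sentence ``the cohomological effect of a pushout along each generator of $J$ matches the strictly unital case'' does not follow from Lemma~\ref{klemma} and is not justified. The paper's proof of $J\text{-cell}\subset W$ proceeds instead by writing the pushout $\mathscr{Y}$ explicitly as a direct sum of complexes built from $\mathcal{O}(2k{+}1)$, the hom-complexes of $\mathscr{X}$, and either copies of the acyclic $D(n)$ (for $\alpha(n)$) or copies of $\bar K:=\mathscr{K}(0,0)/\k[0]$ (for $\kappa$), and then applies the K\"unneth formula. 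For the $\kappa$ case the acyclicity of $\bar K$ is Drinfeld's result [Dr, 3.7], not Lemma~\ref{klemma}. Without this explicit operadic description of the pushout you have no mechanism for controlling the hom-complexes of $\mathscr{Y}$, so your proposal for (d) is incomplete.

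A smaller point: you propose to check $J\subset I\text{-cof}$ by direct inspection. The paper takes the shorter route of deducing $J\text{-cof}\subset I\text{-cof}$ from $I\text{-inj}\subset J\text{-inj}$, which is immediate once (5) is known.
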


\subsubsection{\sc The sets $I$ and $J$}\label{sectionijwu}
Here we define sets $I$ and $J$ of morphisms in $\Cat_\dgwu(\k)$ which later are proven to be the sets of {\it generating cofibrations} and of {\it generating acyclic cofibrations} for the closed model structure, whose existence is stated in Theorem \ref{theoremmain1}.

\vspace{2mm}

{\sc The Kontsevich dg category}
\noindent

Denote by $\mathscr{K}$ the strictly unital dg category with two objects $0$ and $1$, whose morphisms are described by generators and relations, as follows:
\begin{itemize}
\item a closed degree 0 morphism $f\in\mathscr{K}(0,1)$ and a closed degree 0 morphism $g\in\mathscr{K}(1,0)$, 
\item degree -1 morphisms $h_0\in \mathscr{K}(0,0)$ and $h_1\in\mathscr{K}(1,1)$ such that
\begin{equation}\label{kcat1}
gf=\id_0+dh_0.\ \ fg=\id_1+dh_1
\end{equation}
\item degree -2 morphism $r\in\mathscr{K}(0,1)$ such that
\begin{equation}\label{kcat2}
dr=h_1f-fh_0
\end{equation}
\end{itemize}
This category was introduced by Kontsevich in [K, Lecture 6]. 

It was proven in [Dr, 3.7] that $\mathscr{K}$ is a (semi-free) resolution of the dg category which is the $\k$-linear envelope of the ordinary category with two objects 0 and 1, and two morphisms $f^\prime: 0\to 1$, $g^\prime: 1\to 0$ such that $gf=\id_0$, $fg=\id_1$. \footnote{It follows from this result that any cocycle of negative degree is a coboundary in the complexes of morphisms of $\mathscr{K}$. Clearly $h_0g-gh_1$ is a cycle of degree -1 in $\mathscr{K}(1,0)$. The reader may like to find, as an exercise, an explicit degree -2 morphism in $\mathscr{K}(1,0)$ whose boundary is $h_0g-gh_1$.}

On the other hand, Kontsevich proved in [K1, Lecture 6] the following fact:

Assume we are given a dg category $C$, and a closed degree 0 morphism $\xi\in C^0(x,y)$, which is a homotopy equivalence (that is, which descends to an isomorphism in $H^0(C)$). Then there is a (not unique) dg functor $F\colon \mathscr{K}\to C$ such that $F(f)=\xi$.

Lemma below shows that this property still holds, with minor changes, when $C$ is a weakly unital dg category:
\begin{lemma}\label{klemma}
Let $C$ be a weakly unital dg category, and $\xi\in C^0(x,y)$ be a closed degree 0 morphism, such that $[\xi]\in H^0(C)$ is a homotopy equivalence. Then there is a weakly unital dg functor $F\colon \mathscr{K}\to C$ such that $F(f)=\xi^\prime$, where $\xi^\prime\in C^0(x,y)$ is a closed degree 0 morphism such that $[\xi]=[\xi^\prime]$ in $H^0(C)$.
\end{lemma}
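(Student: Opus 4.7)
I would adapt the classical construction of Kontsevich [K1, Lecture 6] / Drinfeld [Dr, Lemma 3.7], which in the strict unital setting produces a dg functor $\mathscr{K}\to C$ sending $f$ to any given closed degree $0$ homotopy equivalence. The only modification needed is the trick hinted at in the introduction: replace $\xi$ by $\xi':=\id_y\cdot\xi\cdot\id_x$, which has the same class in $H^0(C)$ but is ``strict'' with respect to the weak identities in the sense that $\id_y\cdot\xi'=\xi'\cdot\id_x=\xi'$. This strictness is exactly what lets Kontsevich's argument go through in the weakly unital case without any adjustment to $\mathscr{K}$ itself.

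\textbf{Steps.} First, verify the three properties of $\xi'$: closedness of degree $0$ is immediate; the equality $[\xi']=[\xi]$ in $H^0(C)$ follows from Lemma \ref{lemmatriv1}, which says $H^0(C)$ is strictly unital, so $[\id_y]\cdot[\xi]\cdot[\id_x]=[\xi]$; and the strict identities $\id_y\cdot\xi'=\xi'\cdot\id_x=\xi'$ follow from associativity together with the idempotence $\id_X\cdot\id_X=\id_X$ of Definition \ref{defwu}. Next, since $[\xi']$ is a homotopy equivalence, I pick a closed $\eta_0\in C^0(y,x)$ with $[\eta_0][\xi']=[\id_x]$ and $[\xi'][\eta_0]=[\id_y]$, and set $\eta:=\id_x\cdot\eta_0\cdot\id_y$, which has the analogous strict identity properties. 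Now define $F(0)=x$, $F(1)=y$, $F(\id_i)=\id_{F(i)}$, $F(f)=\xi'$, $F(g)=\eta$, and choose $F(h_0)\in C^{-1}(x,x)$, $F(h_1)\in C^{-1}(y,y)$ with $dF(h_0)=\eta\xi'-\id_x$, $dF(h_1)=\xi'\eta-\id_y$ (possible since both right-hand sides are boundaries by the homotopy inverse property). A direct calculation exploiting $\id_y\cdot\xi'=\xi'\cdot\id_x=\xi'$ shows that $F(h_1)\cdot\xi'-\xi'\cdot F(h_0)\in C^{-1}(x,y)$ is a cycle; by the extension of [Dr, Lemma 3.7] treated in Appendix A, one may adjust $F(h_0), F(h_1)$ by closed corrections so that this cycle becomes a boundary $dF(r)$, yielding the last generator.

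\textbf{Main obstacle.} The most delicate point is to verify that the constructed $F$ is a weakly unital dg functor in the sense of Definition \ref{defwucat} — that is, that the diagram \eqref{mapwu} commutes at the full $A_\infty$ level, not merely on its $n=1$ Taylor components. Since $\mathscr{K}$ is viewed as weakly unital via Example \ref{uwu}, $p^{\mathscr{K}}_n=0$ for $n\ge 2$, so the requirement becomes the vanishing of $p^C_n((F\oplus\id)(x_1),\dots,(F\oplus\id)(x_n))$ for every composable sequence $x_1,\dots,x_n$ in $\mathscr{K}\oplus\k_\mathscr{K}$ and every $n\ge 2$. On sequences of pure $\mathscr{K}$-arguments this vanishing is automatic from $p^C\circ i=\id_C$; the nontrivial case involves mixed sequences where some $x_i$ is a unit $1_{F(a)}$. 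This is precisely where the $\xi\mapsto\xi'$ trick pays off: the strict identity properties enforced on $\xi', \eta$, combined with analogous strictness to be arranged for the higher data $F(h_0), F(h_1), F(r)$, conspire with the defining relations of the operad $\mathcal{O}$ (see Section \ref{subsectionmonadwu}) to force the relevant Taylor components of $p^C$ to vanish on the image of $F\oplus\id$.
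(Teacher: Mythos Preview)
Your opening move --- replacing $\xi$ by $\xi':=\id_y\cdot\xi\cdot\id_x$ and likewise sandwiching $\eta$ --- is exactly the paper's. The gap is in your construction of $F(r)$. You correctly note that $F(h_1)\cdot\xi'-\xi'\cdot F(h_0)$ is a degree $-1$ cycle in $C(x,y)$, but you then invoke ``the extension of [Dr, Lemma 3.7] treated in Appendix A'' to adjust $F(h_0),F(h_1)$ so that it bounds. That citation is misplaced: [Dr, 3.7] and Appendix A concern the cohomology of the \emph{source} category $\mathscr{K}$, not of the arbitrary target complex $C(x,y)$, and there is no reason $H^{-1}(C(x,y))=0$. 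The paper makes no cohomological appeal here; it uses Kontsevich's explicit correction, keeping $\xi',\eta',h_x'$ and replacing $h_y'$ by
\[
h_y'':=h_y'-\xi'\cdot h_x'\cdot\eta'-h_y'\cdot\xi'\cdot\eta',
\]
after which one checks directly that \eqref{kcat1} and \eqref{kcat2} hold with $r=-h_y''\cdot\xi'\cdot h_x'+\xi'\cdot h_x'\cdot h_x'$. (Your idea is salvageable by a \emph{different} argument: since $[\xi']$ is invertible, composition with $\xi'$ induces an isomorphism $H^{-1}(C(x,x))\simeq H^{-1}(C(x,y))$, so one can kill the class of the cycle by adjusting $F(h_0)$ by a suitable closed degree $-1$ element; but Appendix A does not provide this.)

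Your ``Main obstacle'' --- commutativity of \eqref{mapwu} at the higher Taylor levels --- is a legitimate concern, and one the paper's proof does not address either: it stops once the relations of $\mathscr{K}$ are verified. Your proposed resolution, however, does not work as stated. The strictness $\id_y\cdot\xi'=\xi'\cdot\id_x=\xi'$ only forces $d\,p_2^C(1_x,\xi')=0$, not $p_2^C(1_x,\xi')=0$, and nothing in the operadic relations \eqref{orel} makes the operations $p^C_{n;i_1,\dots,i_k}$ vanish on particular elements of $C$. So you have identified a subtlety on which both your sketch and the paper's own argument are silent, but you have not closed it.
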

\begin{proof}
The proof uses basically the same computation as in Kontsevich's proof for strictly unital case, with some adjustments.

The problem is that $1_y\cdot \xi$ and $\xi\cdot 1_x$ may be distinct from $\xi$. Consider $\xi^\prime:=1_y\cdot \xi\cdot 1_x$. Then $1_y\cdot \xi^\prime=\xi^\prime\cdot 1_x=\xi^\prime$ (because $1_z 1_z=1_z$ for any $z$, see Definition \ref{defwu}). By assumption, there is degree 0 morphism $\eta\in C^0(y,x)$ which is inverse to $\xi$ (and, therefore, inverse to $\xi^\prime$ as well) in $H^0(C)$. We get:
\begin{equation}\label{kcat3}
\eta\cdot \xi^\prime=1_x+dh_x,\ \ \ \xi^\prime\cdot \eta=1_y+dh_y
\end{equation}
Set
$$
\eta^\prime=1_x\cdot\eta\cdot 1_y,\ h_x^\prime=1_x\cdot h_x\cdot 1_x,\ h_y^\prime=1_y\cdot h_y\cdot 1_y
$$
From \eqref{kcat3} we find
\begin{equation}\label{kcat4}
\eta^\prime \cdot \xi^\prime=1_x+dh_x^\prime,\ \ \ \xi^\prime\cdot \eta^\prime=1_y+dh_y^\prime
\end{equation}
However, \eqref{kcat2} (for the corresponding morphisms) may fail. 

The rest of the proof is as in [K1, Lecture 6]. Maintain $\xi^\prime,\eta^\prime,h_x^\prime$, and set
\begin{equation}\label{kcat5}
h_y^\pprime:=h_y^\prime-\xi^\prime\cdot  h_x^\prime\cdot  \eta^\prime -h_y^\prime\cdot \xi^\prime\cdot \eta^\prime
\end{equation}
It is checked directly that $(\xi^\prime,\eta^\prime,h_x^\prime,h_y^\pprime)$ satisfy \eqref{kcat1} and \eqref{kcat2}, with 
\begin{equation}\label{kcat6}
r=-h_y^\pprime\cdot \xi^\prime \cdot h_x^\prime+\xi^\prime\cdot  h_x^\prime \cdot h_x^\prime
\end{equation}
\end{proof}

\vspace{2mm}

{\sc The sets $I$ and $J$}

\noindent

Define, for any integral number $n$, the complex $D(n):=\Cone(\k[n]\xrightarrow{\id}\k[n])$. It is the complex 
$$
\k[n]\xrightarrow{\id}\k[n-1]
$$
Denote $S(n-1)=\k[n-1]$, and denote by $i\colon S(n-1)\to D(n)$ the natural imbedding of complexes. 

Denote by $\mathscr{A}$ the dg category with a single object $0$, and with $\mathscr{A}(0,0)=\k$. Denote by $\kappa$ the strictly unital dg functor $\kappa\colon \mathscr{A}\to\mathscr{K}$, sending $0$ to $0$. 

Denote by $\mathscr{B}$ the (strictly unital) dg category with two objects 0 and 1, such that $\mathscr{B}(0,0)=\k$, $\mathscr{B}(1,1)=\k$, $\mathscr{B}(0,1)=0$, $\mathscr{B}(1,0)=0$. 

Denote by ${P}(n)$ the dg category with two objects 0 and 1, and $P(n)(0,1)=D(n)$, $P(n)(0,0)=0$, $P(n)(1,1)=0$, $P(n)(1,0)=0$. Denote by $\mathscr{P}(n)$ the weakly unital dg category 
$$
\mathscr{P}(n)=FU(P(n))
$$
(the functors $F\colon \Graphs_\dg(\k)\to\Cat_\dgwu(\k)$, $U\colon \Cat_\dgwu(\k)\to\Graphs_\dg(\k)$ are defined in \ref{subsectionmonadwu}).

Denote by $\alpha(n)$ the (weakly unital) dg functor $\alpha(n)\colon \mathscr{B}\to\mathscr{P}(n)$ sending 0 to 0 and 1 to 1.

Denote by $C(n)$ the dg category with two objects 0 and 1, and with morphisms $C(n)(0,1)=S(n-1)$, $C(n)(0,0)=0$, $C(n)(1,1)=0$, $C(n)(1,0)=0$. Denote 
$$
\mathscr{C}(n)=FU(C(n))
$$
the corresponding weakly unital dg category.

Consider the morphism $b(n)\colon C(n)\to P(n)$ the map of dg categories, sending 0 to 0, 1 to 1, and such that $S(n-1)=C(n)(0,1)\to P(n)(0,1)=D(n)$ is the imbedding $i$. Define
$$
\beta(n)=FU(b(n))\colon \mathscr{C}(n)\to\mathscr{P}(n)
$$
It is a weakly unital dg functor. 

Let $Q\colon \varnothing \to \mathscr{A}$ be the natural dg functor.

\vspace{2mm}

\noindent

Let $I$ be a set of morphisms in $\Cat_\dgwu(\k)$ which comprises the dg functor $Q$ and the weakly unital dg functors $\beta(n)$, $n\in\mathbb{Z}$.

Let $J$ be a set of morphisms in $\Cat_\dgwu(\k)$ which comprises $\kappa$ and $\alpha(n)$, $n\in\mathbb{Z}$.

The sets $I$ and $J$ are referred to as the sets of {\it generating cofibrations} and of {\it generating acyclic cofibrations}, correspondingly.

\subsubsection{\sc The morphisms with the RLP with respect to $I$ and $J$}
The morphisms with $RLP$ with repsect to a set $S$ of morphisms is denoted by $S{-}\inj$.

A weakly unital dg functor $\mathscr{P}(n)\to D$, for $D$ in $\Cat_\dgwu(\k)$, is 1-to-1 corresponded to a morphism in $D$ of degree  $-n$. Similarly, a weakly unital dg functor $\mathscr{C}(n)\to D$ is 1-to-1 corresponded to a {\it closed} degree $-n+1$ morphism in $D$. It is straightforward.

Assume a weakly unital dg functor $f\colon C\to D$ has RLP with respect to all $\alpha(n)$, $n\in \mathbb{Z}$:
\begin{equation}
\xymatrix{
\mathscr{B}\ar[r]^{t_2}\ar[d]_{\alpha(n)}&C\ar[d]^{\phi}\\
\mathscr{P}(n)\ar[r]^{t_1}\ar@{.>}[ur]&D
}
\end{equation}
For the functor $\phi$ it means that any morphism in $D(\phi x,\phi y)$ is $\phi(q)$, for some $q\in C(x,y)$. 
That is, $\phi$ is surjective on morphisms.

Assume that a weakly unital dg functor $\phi\colon C\to D$ has RLP with respect to all $\beta(n)$, $n\in\mathbb{Z}$:
\begin{equation}
\xymatrix{
\mathscr{C}(n)\ar[r]^{t_2}\ar[d]_{\beta(n)}&C\ar[d]^{\phi}\\
\mathscr{P}(n)\ar[r]^{t_1}\ar@{.>}[ur]&D
}
\end{equation}
One deduces from this property that for any $x,y\in C$, the map of complexes $C(x,y)\to D(\phi x,\phi y)$ is component-wise surjective, and is a quasi-isomorphism. 

We summarize:
\begin{lemma}\label{lemmarlp1}
A weakly unital dg functor $\phi\colon C\to D$ has RLP with respect to all $\alpha(n)$, $n\in\mathbb{Z}$ if and only if $\phi$ obeys $(F1)$. A weakly unital dg functor $\phi\colon C\to D$ has RLP with respect to all $\beta(n)$, $n\in\mathbb{Z}$ if and only if $\phi$ obeys $(F1)\cap(W1)$.
\end{lemma}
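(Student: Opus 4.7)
The plan is to use the free-forgetful adjunction $(F, U)$ from Proposition \ref{propmoncatwu} to rewrite each lifting problem as an elementary statement about morphism complexes. Since $\mathscr{P}(n) = FU(P(n))$ and $\mathscr{C}(n) = FU(C(n))$, adjointness identifies $\Hom_{\Cat_\dgwu(\k)}(\mathscr{P}(n), D)$ with the set of unital dg graph maps $U(P(n)) \to U(D)$, which amounts to a pair $(x, y) \in \Ob(D)^2$ together with a chain map $D(n) \to D(x, y)$; because $D(n) = \Cone(\k[n] \xrightarrow{\id} \k[n])$, such a chain map is the same as an unconstrained element $m \in D(x, y)^{-n}$. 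Similarly, a morphism $\mathscr{C}(n) \to D$ is a pair $(x, y)$ together with a closed element $s \in D(x, y)^{-n+1}$. Under these bijections, $\beta(n)$ corresponds to sending a lifted datum $m$ to $dm$, and $\alpha(n) \colon \mathscr{B} \to \mathscr{P}(n)$ corresponds to forgetting the element $m$ while retaining the object data.

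For the first equivalence, a commutative square with $\alpha(n)$ on the left is then the data of objects $x, y \in C$ together with an element $m \in D(\phi x, \phi y)^{-n}$, and a lift is $\tilde m \in C(x, y)^{-n}$ with $\phi(\tilde m) = m$. Letting $n \in \mathbb{Z}$ vary, RLP of $\phi$ against all $\alpha(n)$ is precisely degreewise surjectivity of each $C(x, y) \to D(\phi x, \phi y)$, that is, condition (F1).

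For the second equivalence, a square with $\beta(n)$ on the left is the data of $x, y \in C$, a closed $s \in C(x, y)^{-n+1}$, and an element $m \in D(\phi x, \phi y)^{-n}$ satisfying $dm = \phi(s)$; a lift is $\tilde m \in C(x, y)^{-n}$ with $d\tilde m = s$ and $\phi(\tilde m) = m$. This is the classical lifting criterion against the generating trivial cofibrations $\{S(n-1) \to D(n)\}_{n \in \mathbb{Z}}$ of the projective model structure on $\Vect_\dg(\k)$, which characterizes surjective quasi-isomorphisms of complexes. Hence RLP against all $\beta(n)$ is equivalent to each $C(x, y) \to D(\phi x, \phi y)$ being a surjective quasi-isomorphism, i.e., to $(F1) \cap (W1)$. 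The whole lemma reduces to this bookkeeping once the adjunction is invoked; the only point requiring care is the adjointness identifications in the first paragraph, in particular that the freeness of $\mathscr{P}(n)$ and $\mathscr{C}(n)$ together with the operadic relation $p_{n;1,2,\ldots,n}=0$ rule out any additional morphism data beyond a single generator in the $(0,1)$-slot.
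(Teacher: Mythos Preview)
Your proof is correct and follows the same approach as the paper: translate maps out of $\mathscr{P}(n)$ and $\mathscr{C}(n)$ via the free--forgetful adjunction into the data of a single (respectively, closed) morphism, then read off the lifting conditions as surjectivity, respectively surjective quasi-isomorphism, on Hom complexes. The paper only sketches the ``only if'' directions and leaves the ``if'' directions as standard; you have supplied both directions in full, which is fine. One small remark: your final sentence invoking the relation $p_{n;1,2,\ldots,n}=0$ is superfluous --- once you use the adjunction $\Hom_{\Cat_\dgwu(\k)}(F\Gamma,D)\simeq\Hom_{\Graphs_\dgu(\k)}(\Gamma,UD)$, there is nothing further to check about the operadic structure, so that clause can simply be dropped.
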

In fact, we have proved the ``only if'' parts of both statements. The proofs of the ``if'' parts are standard and are left to the reader. 

\qed

\begin{prop}\label{proprlp2}
One has:
\begin{equation}
I{-}\inj=\Surj\cap (W1)=J{-}\inj\cap W
\end{equation}
\end{prop}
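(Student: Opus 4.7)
The plan is to unpack both $I{-}\inj$ and $J{-}\inj$ in terms of the conditions (F1), (W1), (W2), and surjectivity on objects, and then verify the two equalities separately.

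First, by Lemma \ref{lemmarlp1}, having the RLP against all $\alpha(n)$ is equivalent to (F1), and having the RLP against all $\beta(n)$ is equivalent to (F1) $\cap$ (W1). The RLP against $Q\colon \varnothing \to \mathscr{A}$ is equivalent to surjectivity on objects, since a weakly unital dg functor $\mathscr{A}\to D$ is the same as a choice of object in $D$. Combining these, $I{-}\inj$ is precisely the class of functors satisfying (F1), (W1), and surjectivity on objects, i.e.\ $\Surj\cap(W1)$. This gives the first equality.

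For the second equality, I would prove the inclusion $J{-}\inj\cap W\subseteq\Surj\cap(W1)$ first, as the easier direction. Given $\phi\in J{-}\inj\cap W$, conditions (F1) and (W1) follow at once. To produce, for each $z\in D$, an object $y\in C$ with $\phi(y)=z$, I use (W2) to find $x\in C$ together with a closed degree~$0$ homotopy equivalence $g\colon \phi(x)\to z$ in $D$. Lemma \ref{klemma} applied to $D$ then produces a weakly unital dg functor $F\colon \mathscr{K}\to D$ sending $0\mapsto\phi(x)$ and $1\mapsto z$. Applying the RLP of $\phi$ against $\kappa$ to the commutative square formed by $\mathscr{A}\to C,\ 0\mapsto x$ and $F$ yields a lift $F'\colon\mathscr{K}\to C$, and $y:=F'(1)$ satisfies $\phi(y)=z$, giving surjectivity on objects.

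For the reverse inclusion $\Surj\cap(W1)\subseteq J{-}\inj\cap W$, (F1) gives RLP against the $\alpha(n)$, and surjectivity on objects together with (W1) yields that $H^0(\phi)$ is an equivalence, proving (W2). The remaining non-trivial point is the RLP against $\kappa$: given $x\in C$ and a weakly unital dg functor $v\colon\mathscr{K}\to D$ with $v(0)=\phi(x)$, one must lift $v$ to $v'\colon\mathscr{K}\to C$ with $v'(0)=x$. I first pick $y\in C$ with $\phi(y)=v(1)$ by surjectivity on objects, then inductively lift the defining data $v(f), v(g), v(h_0), v(h_1), v(r)$. At each step the preliminary lift, obtained from (F1), has boundary in $\ker(\phi)$; since $\ker(\phi)$ on every morphism complex is acyclic (a consequence of (F1) and (W1) via the long exact cohomology sequence of $0\to\ker(\phi)\to C(x,y)\to D(\phi x,\phi y)\to 0$), the preliminary lift can be corrected by an element of the kernel to a lift with the required boundary and image under $\phi$. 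This successive lifting of the five pieces of $\mathscr{K}$-data is the main obstacle and is essentially the content of [Dr, Lemma 3.7].
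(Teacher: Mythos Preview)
Your argument for the first equality and for the inclusion $J{-}\inj\cap W\subseteq\Surj\cap(W1)$ is correct and essentially matches the paper (the latter is the combination of the direction $J{-}\inj\subset\Fib$ of Lemma~\ref{lemmafibj} with Lemma~\ref{lemmaverys}).

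For the reverse inclusion $\Surj\cap(W1)\subseteq J{-}\inj\cap W$, your route differs from the paper's, and your sketch has a gap specific to the weakly unital setting. The paper does not lift the five generators of $\mathscr{K}$ one by one; instead it proves the stronger statement $\Fib\subset J{-}\inj$ (Lemma~\ref{lemmafibj}) by using the identification $\mathscr{K}=\mathscr{D}_0$ from [Dr,~3.7] to reinterpret a functor $\mathscr{K}\to D$ as a contracting homotopy of $\Cone(\xi)$ in $D^\pretr$, and then lifts a \emph{single} contracting homotopy via Lemma~\ref{lemmaminor} and (F1). Your generator-by-generator lifting is a valid alternative in the strictly unital case, but here you have overlooked that $\mathscr{K}$ is strictly unital: for $v'\colon\mathscr{K}\to C$ to be even a non-unital dg functor one needs $v'(\id_0)\cdot v'(g)=v'(g)$, etc., and together with the weakly-unital condition $v'(\id_0)=\id_x$ this forces $\id_x\cdot v'(g)=v'(g)$ and the analogous identities for all five lifted generators. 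Your acyclicity-of-$\ker(\phi)$ argument corrects the boundary, but does not arrange this unit compatibility. The paper addresses exactly this point by passing to the strictly unital subcategory $C_u$ and observing that $\phi_u\colon C_u\to D_u$ is still a fibration; your argument can be repaired along the same lines, replacing each lift $\tilde\theta$ by $\id\cdot\tilde\theta\cdot\id$ and noting that the ``sandwiched'' part of $\ker(\phi)$ is a chain retract of $\ker(\phi)$, hence still acyclic. Note also that the paper's approach yields the full Lemma~\ref{lemmafibj}, which is invoked independently in the proof of Theorem~\ref{theoremmain1}.
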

\begin{proof}
In virtue of Lemma \ref{lemmarlp1}, for the first identity it is enough to show that any $\phi$ having RLP with respect to $Q$ is surjective on objects, which is trivial.

For the second identity, we prove a statement which also will be used later.
\begin{lemma}\label{lemmafibj}
One has $\Fib=J{-}\inj$.
\end{lemma}

\noindent

{\it Proof of $J{-}\inj\subset \Fib$}:

\vspace{2mm}

(F1) follows from RLP with respect to $\alpha(n)$, $n\in\mathbb{Z}$, see Lemma \ref{lemmarlp1}.
Prove (F2). Let $\phi\colon C\to D$ be in $J{-}\inj$. Let $x$ be an object in $C$, and $\xi\colon f(x)\to z$ a homotopy equivalence. 
Consider $\xi^\prime=1_{z}\cdot\xi\cdot 1_{f(x)}$. By Lemma \ref{klemma}, there is a weakly unital dg functor $F\colon \mathscr{K}\to D$ such that $F(f)=\xi^\prime$. Then the RLP gives a weakly unital dg functor $\hat{F}\colon \mathscr{K}\to C$ such that $\phi\circ \hat{F}=F$. In particular, $\eta^\prime=\hat{F}(f)\in C(x,?)$ is a homotopy equivalence, such that $\phi(\eta^\prime)=\xi^\prime$.
Now $\xi^\prime-\xi=dt$, by (F1) there exists $t^\prime$ such that $\phi(t^\prime)=t$. Finally, set $\eta:=\eta^\prime-dt^\prime$. Then $[\eta]=[\eta^\prime]$, and $\phi(\eta)=\xi$. It completes the proof of (F2). 

\vspace{2mm}

{\it Proof of $\Fib\subset J{-}\inj$}:

\vspace{2mm}

Let $\phi\colon C\to D$ in $\Fib$. (F1) is equivalent to the RLP with respect to $\alpha(n)$, $n\in\mathbb{Z}$.
It remains to prove the RLP with respect to $\kappa$ for $\phi$. The proof is quite involved.

We are given a weakly unital dg functor $F\colon \mathscr{K}\to D$. Apply (F2) to $\xi=F(f)\in D(\phi(x), z)$, it gives $\eta^\prime\in C(x, y)$ of degree 0, which is homotopy equivalence, $\phi(y)=z$. Set $\eta=1_{y}\cdot \eta^\prime\cdot 1_x$. 
We will construct $\hat{F}\colon \mathscr{K}\to C$ such that $\phi\colon \hat{F}=F$ and $\hat{F}(f)=\eta$. 

\vspace{1mm}

To this end, we make use of a construction from [Dr, 3.7], which links the Kontsevich dg category $\mathscr{K}$ with the Drinfeld dg quotient (loc.cit.). Let $\mathscr{I}_0$ be the (strictly unital) dg category with two objects 0 and 1 and generated by a single morphism $f\in \mathscr{I}_0(0,1)$ of degree 0, $df=0$. Denote $\mathscr{I}:=\mathscr{I}_0^{\pretr}$ the pre-triangulated hull of $\mathscr{I}$ (see [Dr, 2.4]). Consider the object $\Cone(f)\in\mathscr{I}$, and define $\mathscr{J}$ as the full dg sub-category in $\mathscr{I}$ with a single object $\Cone(f)$. Consider the Drinfeld dg quotient $\mathscr{D}:=\mathscr{I}/\mathscr{J}$, and denote by $\mathscr{D}_0$ the full dg subcategory in $\mathscr{D}$ with objects 0 and 1. The following result is due to Drinfeld, loc.cit.:

\begin{lemma}\label{lemmadrk}
One has $\mathscr{D}_0=\mathscr{K}$.
\end{lemma}
We reconstruct the argument in Appendix A.

\qed

It gives rise to the following construction. Let $\mathscr{E}$ be a (strictly unital) dg category, $\xi\in \mathscr{E}(x,y)$ a closed degree 0 morphism which is a homotopy equivalence. One has a dg functor $F\colon \mathscr{I}_0\to \mathscr{E}$, $F(f)=\xi$. It gives rise to 
$F^\pretr\colon \mathscr{I}\to \mathscr{E}^\pretr$. Denote by $\mathscr{X}\subset\mathscr{E}^\pretr$ the full dg subcategory which has a single object $\Cone(\xi)$. One gets
$$
\mathscr{D}=\mathscr{I}/\mathscr{J}\to \mathscr{E}^\pretr/\mathscr{X}
$$
The fact the $\xi$ is a homotopy equivalence implies that one has a dg functor $\mathscr{E}^\pretr/\mathscr{X}\to \mathscr{E}^\pretr$, depending on a contraction of $\Cone(\xi)$. 

We get a dg functor 
$$
\mathscr{D}_0\to \mathscr{E}
$$
which is, in turn, a dg functor $\mathscr{K}\to \mathscr{E}$. Conversely, any dg functor $\mathscr{K}\to \mathscr{E}$ is obtained in this way.\footnote{It gives, in particular, a more conceptual replacement for the Kontsevich computation reproduced in Lemma \ref{klemma} (for its strictly unital case).}

If all our categories were strictly unital, we would make use of this construction, to prove that $\Fib\Rightarrow \kappa{-}\inj$, as follows. 

One has:
\begin{lemma}\label{lemmaminor}
Let $X$ be a dg category, $x\in X$ an object. Assume there are two degree -1 maps $h_1,h_2\in X^{-1}(x,x)$ such that $dh_i=\id_x$, $i=1,2$. Then there is $t\in X^{-2}(x,x)$ such that $dt=h_2-h_1$
\end{lemma}
It is true for $t=h_1h_2$.

\qed

A dg functor $F\colon \mathscr{K}\to D$, $F(f)=\xi$, amounts to the same that a contraction of $\Cone(\xi)$ in $D^\pretr$. That is, we get $h\in D^\pretr(\Cone(\xi),\Cone(\xi))$ such that $dh=\id_{\Cone(\xi)}$. We know from (F2) that $\Cone(\eta)$ is contractible. It gives rise to $\tilde{h}_1\in C^\pretr(\Cone(\eta),\Cone(\eta))$ such that $d\tilde{h}_1=\id_{\Cone(\eta)}$. We may have not $\phi(\tilde{h}_1)=h$.  In any case, $d(\phi(\tilde{h}_1))=\id_{\Cone(\xi)}$. By Lemma \ref{lemmaminor} one has $\phi(\tilde{h}_1)-h=dt$. By (F1), we lift $t$ to $\tilde{t}$, $\phi(\tilde{t})=t$. Set $\tilde{h}:=\tilde{h}_1-d\tilde{t}$. One has $d\tilde{h}=\id_{\Cone(\eta)}$ and $\phi(\tilde{h})=h$. It gives a lift of the dg functor $\hat{F}\colon \mathscr{K}\to C$ such that $\phi\circ \hat{F}=F$.

\vspace{3mm}

In the weakly unital case, this speculation should be adjusted. 

The main point is that, for a weakly unital dg category $C$ and for a morphism $\xi\colon x\to y$ in $C$, we can not define $\Cone(\xi)$.
Indeed, we want any object to have a weak unit. One checks that $1_{\Cone(\xi)}:=(1_x, 1_{y[-1]})$ satisfies $d1_{\Cone(\xi)}=0$
if and only if one has $f\cdot 1_x=1_y\cdot f$. It means that we can define $C^\pretr$ but it fails to be weakly unital, even if $C$ is.

For a weakly unital dg category $C$, denote by 
$C_u$ the dg subcategory of $C$, whose objects are $\Ob(C)$, and whose morphisms are those morphisms $f$ in $C$ for which $1\cdot f=f\cdot 1$. We consider $C_u$ as a unital dg category. 

If $\phi\colon C\to D$ is in $\Fib$, then $\phi_u\colon C_u\to D_u$ is also in $\Fib$, as follows from the argument above, with replacement of $f$ by $1\cdot f \cdot 1$. 

As $\mathscr{K}$ is strictly unital, a weakly unital dg functor $F\colon \mathscr{K}\to D$ defines a dg functor $F_u\colon \mathscr{K}\to  D_u$. Then we construct $\hat{F}\colon \mathscr{K}\to C_u$, as in the strictly unital case. It completes the proof.

\qed

Now the second identity is proved as follows.
One has $J{-}\inj\cap W=\Fib\cap W=\Surj\cap (W1)$ where the first identity follows from Lemma \ref{lemmafibj}, and the second one follows from Lemma \ref{lemmaverys}.

\end{proof}

\comment
For the second identity, assume $\phi\in J{-}\inj\cap W$, prove that $\phi\in \Surj\cap(W1)$. By Lemma \ref{lemmarlp1}, it is enough to prove that $\phi$ is surjective on objects. By $(W2)$, for any object $z\in D$ there is an object $x\in C$ and a homotopy equivalence $\xi\colon \phi(x)\to z$. By Lemma \ref{klemma}, there is $\xi^\prime\in D(\phi x,z)$ such that $[\xi^\prime]=[\xi]$, and a dg functor $F\colon \mathscr{K}\to D$ such that $F(f)=\xi^\prime$. Then the RLP with respect to $\kappa\colon \mathscr{A}\to\mathscr{K}$ shows that $\xi^\prime=\phi(\eta)$ for some morphism $\eta\in C(x,?)$. In particular, there is an object $y\in C$ (for which $\eta\in C(x,y)$) such that $\phi(y)=z$. Therefore, $\phi$ is surjective on objects.

Conversely, assume $\phi\colon C\to D$ is in $\Surj\cap(W1)$. It is clear that $\phi$ has $(W2)$, so $\phi\in W$. It remains to show that $\phi$ has RLP with respect to $\kappa$. 

To this end, let $F\colon \mathscr{K}\to D$ be a (weakly unital) dg functor, so  that $F(f)=\xi\in D(x_1,y_1)$ is a closed degree 0 morphism. Then $\xi$ is a homotopy equivalence. By (W1), there is a closed degree 0 morphism $\eta\in C(x,y)$ such that $\phi(\eta)=\xi+d\lambda_1$. By (F1), $\lambda_1=\phi(\lambda)$, so that 
$
\phi(\eta-d\lambda)=\xi
$
Set $\eta^\prime=1\cdot (\eta-d\lambda)\cdot 1$. 
We still have $\phi(\eta^\prime)=\xi$. 
\endcomment

\subsubsection{\sc The proof of Theorem \ref{theoremmain1}}
The proof relies on [Ho, Th. 2.1.19]. 
Recall this theorem in a slightly different form, adapted for our needs:
\begin{theorem}\label{theoremhovey}
Let $\mathscr{C}$ be a category with all small limits and colimits. Suppose $\mathscr{W}$ is a subcategory of $\mathscr{C}$, and $I$ and $J$ are sets of maps. Assume the following conditions hold:
\begin{itemize}
\item[1.] the subacategory $W$ has two out of three property and is closed under retracts,
\item[2.] the domains of $I$ are small relative to $I{-}\cell$,
\item[3.] the domains of $J$ are small relative to $J{-}\cell$,
\item[4.] $J{-}\cell\subset \mathscr{W}\cap I{-}\cof$,
\item[5.] $I{-}\inj=W\cap J{-}\inj$.
\end{itemize}
Then there is a cofibrantly generated closed model structure on $\mathscr{C}$, for which the morphisms $W$ of $\mathscr{W}$ are weak equivalences, $I$ are generating cofibrations, $J$ are acyclic generating cofibrations. Its fibrations are defined as $J{-}\inj$.
\end{theorem}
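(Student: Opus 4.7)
The plan is to define cofibrations as $I{-}\cof$, fibrations as $J{-}\inj$, and weak equivalences as $\mathscr{W}$, and then verify the axioms of a closed model category in the following order: existence of small limits/colimits, the two-out-of-three property, closure under retracts, the two factorization axioms, and the two lifting axioms. The first two are assumed outright. For closure under retracts, $\mathscr{W}$ is closed under retracts by hypothesis~1, while $I{-}\cof$ and $J{-}\inj$ are closed under retracts by the formal fact that any class defined by a left or right lifting property against a fixed set is automatically retract-closed.

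The factorization axioms are produced by Quillen's small object argument. Applied to $I$, it yields, for any morphism $f$, a factorization $f = p\circ i$ with $i\in I{-}\cell\subset I{-}\cof$ and $p\in I{-}\inj$; hypothesis~2 supplies the required smallness for the transfinite composition to terminate. Applied to $J$, it yields $f = q\circ j$ with $j\in J{-}\cell$ and $q\in J{-}\inj$, using hypothesis~3. Hypothesis~4 gives $j\in \mathscr{W}\cap I{-}\cof$, so $j$ is an acyclic cofibration; hypothesis~5 gives $p\in I{-}\inj = \mathscr{W}\cap J{-}\inj$, so $p$ is an acyclic fibration. These are the two required factorizations.

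For the first lifting axiom, cofibrations $I{-}\cof$ have by definition the left lifting property with respect to $I{-}\inj$, which by hypothesis~5 equals the class of acyclic fibrations. The second lifting axiom is the delicate one: I must show that every $f\in \mathscr{W}\cap I{-}\cof$ has LLP with respect to every $g\in J{-}\inj$. The standard retract trick does it: factor $f = q\circ j$ with $j\in J{-}\cell$ and $q\in J{-}\inj$ via the small object argument. By hypothesis~4, $j\in \mathscr{W}$, so two-out-of-three (applied to $f, j, q$) forces $q\in \mathscr{W}\cap J{-}\inj = I{-}\inj$. Since $f\in I{-}\cof$ has LLP against $I{-}\inj$, the square expressing $f = q\circ j$ admits a lift that exhibits $f$ as a retract of $j$. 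Because $j\in J{-}\cell$ has LLP against $J{-}\inj$ and this property passes to retracts, $f$ inherits the desired lifting property.

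The main obstacle is neither the small object argument nor any single axiom in isolation but the interlocking role of hypotheses~4 and~5, which conspire precisely in the retract argument above to identify $\mathscr{W}\cap I{-}\cof$ with the retract closure of $J{-}\cell$ and thereby produce the non-trivial half of the second lifting axiom. Once this is in place, the fact that the resulting model structure is cofibrantly generated with generating sets $I$ and $J$ in the sense of \textbf{[Ho, Def.~2.1.17]} is immediate from the definitions, and the characterization of fibrations as $J{-}\inj$ and of acyclic fibrations as $I{-}\inj$ follows from hypothesis~5.
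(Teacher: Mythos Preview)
Your proposal is correct and follows the standard argument (small object argument for factorizations, retract trick for the nontrivial lifting axiom) that is given in [Ho, Th.~2.1.19]. The paper itself does not supply a proof of this theorem but simply quotes it from Hovey and refers the reader there, so there is nothing further to compare.
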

The reader is referred to [Ho, Sect.2.1] for notations $S{-}\cof$ and $S{-}\cell$.

\qed

Prove Theorem \ref{theoremmain1}.

We check conditions (1)-(5) of Theorem \ref{theoremhovey}.
(1)-(3) are clear. We proved (5) in Proposition \ref{proprlp2}. It follows from (5) that $I{-}\inj\subset J{-}\inj$, therefore, $I{-}\cof\supset J{-}\cof$. Therefore, it remains to prove the part $J{-}\cell\subset W$ of (4), which we do below. The fact that  $J{-}\inj$ coincides with the class $\Fib$ defined in Section \ref{section311} is proven in Lemma \ref{lemmafibj}.

\vspace{2mm}

{\it Proof of $J{-}\cell\subset W$}:
We have to prove that in the following push-outs squares in $\Cat_\dgwu(\k)$ the weakly unital dg functor $f\colon \mathscr{X}\to \mathscr{Y}$ is a weak equivalence:
\begin{equation}\label{jweq}
(a)\ \xymatrix{
\mathscr{B}\ar[r]^{g}\ar[d]_{\alpha(n)}&\mathscr{X}\ar[d]^{f}\\
\mathscr{P}(n)\ar[r]& \mathscr{Y}
}\hspace{15mm}
(b)\ \xymatrix{
\mathscr{A}\ar[r]^{h}\ar[d]_{\kappa}&\mathscr{X}\ar[d]^{f}\\
\mathscr{K}\ar[r]& \mathscr{Y}
}
\end{equation}
where the (weak unital) dg functors $g$ and $h$ are arbitrary. We consider the cases (a) and (b) separately.

{\it The case (a):} It is clear that $\Ob(\mathscr{X})=\Ob(\mathscr{Y})$, and $f$ acts by the identity map on the objects. Therefore, we have to show that, for any objects $a,b\in\mathscr{X}$, the map of complexes $f(a,b)\colon \mathscr{X}(a,b)\to\mathscr{Y}(a,b)$ is a quasi-isomorphism. For objects 0 and 1 in $\mathscr{B}$, denote $u=g(0), v=g(1)$. Then
\begin{equation}
\begin{aligned}
\ &\mathscr{Y}(a,b)=\\
&\mathscr{X}(a,b)\bigoplus \mathcal{O}(3)\otimes X(a,u)\otimes D(n)\otimes X(v,b)\bigoplus 
\mathcal{O}(5)\otimes X(a,u)\otimes D(n)\otimes \mathscr{X}(v,u)\otimes D(n)\otimes \mathscr{X}(v,b)\bigoplus\dots
\end{aligned}
\end{equation}
where $\mathcal{O}$ is the operad introduced in \ref{subsectionmonadwu}.
The map $f(a,b)$ sends $\mathscr{X}(a,b)$ to the first summand. All other summands have 0 cohomology by the K\"{u}nneth formula, because $D(n)$ is acyclic.

{\it The case (b):} In this case, $\Ob(\mathscr{Y})=\Ob(\mathscr{X})\sqcup 1_{\mathscr{K}}$. 
It is clear that $H^0(f)$ is essentially surjective. One has to prove that $f$ is locally quasi-isomorphism:
$\mathscr{X}(a,b)\xrightarrow{quis}\mathscr{Y}(a,b)$, $a,b\ne 1_{\mathscr{K}}$. 
Denote $h(0_{\mathscr{A}})=u$.

By [Dr, 3.7], one knows that $\mathscr{K}$ is a resolution of the $\k$-linear envelope of the ordinary category with two objects 0 and 1, and with only morphism between any two objects. In particular, $\mathscr{K}(0,0)$ is quasi-isomorphic to $\k[0]$. Therefore, one can decompose (as a complex):
\begin{equation}
\mathscr{K}(0,0)=\bar{K}\oplus \k[0]
\end{equation}
where $\bar{K}$ is a complex acyclic in all degrees. At the same time, $\k[0]$ is corresponded to a morphism in $h(\mathscr{A}(0,0))\in\mathscr{X}(u,u)$; thus it is not a ``new morphism''.

One has:
\begin{equation}\label{eqdirsum}
\begin{aligned}
\ &\mathscr{Y}(a,b)=\mathscr{X}(a,b)\bigoplus \\
&\mathcal{O}(3)\otimes \mathscr{X}(a,u)\otimes \bar{K} \otimes \mathscr{X}(u,b)\bigoplus\\
&\mathcal{O}(5)\otimes \mathscr{X}(a,u)\otimes \bar{K}\otimes \mathscr{X}(u,u)\otimes  \bar{K}\otimes \mathscr{X}(u,b)\bigoplus\dots
\end{aligned}
\end{equation}
Note that \eqref{eqdirsum} is a direct sum of {\it complexes}.

The map of complexes $f(a,b)$ maps $\mathscr{X}(a,b)$ to the first summand. All other summands have 0 cohomology, because $\bar{K}$ is acyclic by [Dr, 3.7], and by the K\"{u}nneth formula. 

Note that we did not use Theorem \ref{theoremmop} here, the proof does not rely on a computation of the cohomology of the dg operad $\mathcal{O}$.

Theorem \ref{theoremmain1} is proven.

\qed

\section{\sc A Quillen equivalence between $\Cat_\dg(\k)$ and $\Cat_\dgwu(\k)$}
\subsection{\sc A Quillen pair}
Let $\mathscr{C}_1,\mathscr{C}_2$ be closed model categories. Recall that a {\it Quillen pair} of functors $L:\mathscr{C}_1\rightleftarrows \mathscr{C}_2:R$ is an adjoint pair of functors with an extra condition saying that 
$L$ preserves cofibrations and trivial cofibrations, or, equivalently, $R$ preserves fibrations and trivial fibrations. Either of these conditions
guarantee that a Quillen pair of functors descends to a pair of adjoint functors 
\begin{equation}\label{qe}
L:\Ho(\mathscr{C}_1)\rightleftarrows\Ho(\mathscr{C}_2):R
\end{equation}
between the homotopy categories, see e.g. [Hi, Sect. 8.5] or [Ho, Sect. 1.3].

In the case when $\mathscr{C}_1$ is cofibrantly generated, there is a simpler criterium [Ho, Lemma 2.1.20] for a pair of adjoint functors to be a Quillen pair. We reproduce it here for reader's convenience.
\begin{prop}\label{propho2}
Let $\mathscr{C}_1,\mathscr{C_2}$ be closed model categories, with $\mathscr{C}_1$ cofibrantly generated with generating cofibrations $I$ and generating acyclic cofibrations $J$. Let $L:\mathscr{C}_1\rightleftarrows \mathscr{C}_2:R$ be an adjoint pair of functors. Assume that $L(f)$ is a cofibration for all $f\in I$, and $L(f)$ is a trivial cofibration for all $f\in J$. Then the pair $(L,R)$ is a Quillen pair. 
\end{prop}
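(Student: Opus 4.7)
The plan is to verify directly that $R$ preserves fibrations and trivial fibrations, which is the equivalent reformulation of the Quillen pair condition. Since $\mathscr{C}_1$ is cofibrantly generated, the fibrations in $\mathscr{C}_1$ coincide with $J{-}\inj$ and the trivial fibrations coincide with $I{-}\inj$. It therefore suffices to show that, whenever $g$ is a fibration in $\mathscr{C}_2$, the map $R(g)$ lies in $J{-}\inj$, and whenever $g$ is a trivial fibration in $\mathscr{C}_2$, the map $R(g)$ lies in $I{-}\inj$.

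The technical heart of the proof is the standard transposition identity for an adjunction $L\dashv R$: for any $f$ in $\mathscr{C}_1$ and any $g$ in $\mathscr{C}_2$, the map $L(f)$ has the left lifting property against $g$ if and only if $f$ has the left lifting property against $R(g)$. This is checked by transposing any commutative square with vertical arrows $L(f),g$ to its adjoint square with vertical arrows $f,R(g)$, and observing that diagonal fillers correspond bijectively under the adjunction.

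Granting this identity, suppose $g$ is a fibration in $\mathscr{C}_2$. By hypothesis, $L(f)$ is a trivial cofibration in $\mathscr{C}_2$ for every $f\in J$, so $L(f)$ has the left lifting property against every fibration in $\mathscr{C}_2$, in particular against $g$. By the transposition identity, $f$ has the left lifting property against $R(g)$ for every $f\in J$; that is, $R(g)\in J{-}\inj$, so $R(g)$ is a fibration in $\mathscr{C}_1$. Running the same argument with $I$ in place of $J$ and trivial fibration in place of fibration, using the hypothesis that $L(f)$ is a cofibration for every $f\in I$, shows that $R$ sends trivial fibrations to trivial fibrations. Hence $(L,R)$ is a Quillen pair.

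I expect no serious obstacle to this argument; the role of cofibrant generation is essential and worth highlighting. Without it one cannot recognise fibrations and trivial fibrations as right-lifting classes against small sets, and one would be forced into the longer route through closure of cofibrations under pushouts, transfinite compositions, retracts, and the small object argument. Under cofibrant generation the entire verification collapses to two elementary applications of the adjunction transposition above.
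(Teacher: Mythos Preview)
Your argument is correct and is essentially the standard proof of this lemma; the paper itself does not supply a proof but simply refers to [Ho, Lemma 2.1.20], whose argument is exactly the adjunction--lifting transposition you wrote out. There is nothing to add.
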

See [Ho, Lemma 2.1.20] for a proof.

\qed

Let $C$ be a weakly unital dg category. Define
$$
L(C)=C/I
$$
where $I$ is the dg category-ideal generated by $p_n(x_1,\dots,x_n)$, $x_i\in C\oplus\k_C$, $n\ge 2$. (Recall that $p_n(x_1,\dots,x_n)=0$ if $n\ge 2$ and all $x_i$ belong to $C\subset C\oplus \k_C$). 
Clearly $L(C)$ is a unital dg category.

The assignment $C\rightsquigarrow L(C)$ gives rise to a functor $L\colon \Cat_{\dgwu}(\k)\to\Cat_{\dg}(\k)$.

Let $A$ be a unital dg category. Define 
$$
R(A)=(A\oplus\k_A,p_\dg)
$$
where $p_\dg\colon A\oplus\k_A\to A$ is the dg functor constructed in Example \ref{uwu}. Recall that $p_\dg(1_x)=\id_x$, $x\in A$.
It gives rise to a functor
$R\colon \Cat_{\dg}(\k)\to\Cat_{\dgwu}(\k)$.

\begin{prop}\label{propqpair}
The following statements are true:
\begin{itemize}
\item[(1)] there is an adjunction 
$$
\Hom_{\Cat_{\dg}(\k)}(L(C),A)\simeq\Hom_{\Cat_{\dgwu}(\k)}(C,R(A))
$$
\item[(2)] the functors
$$
L\colon \Cat_{\dgwu}(\k)\rightleftarrows \Cat_{\dg}(\k)\colon R
$$
form a Quillen pair of functors. 
\end{itemize}
\end{prop}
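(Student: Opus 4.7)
The plan is to establish (1) by a direct manipulation of the $A_\infty$-functor relations for $p$, and to deduce (2) from Proposition \ref{propho2} by recognising the images of our generating (acyclic) cofibrations under $L$ as Tabuada's generating (acyclic) cofibrations.

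For part (1), I would first verify that $L(C) = C/I$ is indeed a strictly unital dg category, with the classes $[\id_X]$ serving as the strict units. The key identity is the $A_\infty$-relation
$$
dp_2(1_X, f) = p_1(1_X\cdot f) - p_1(1_X)\cdot p_1(f) = f - \id_X\cdot f,
$$
which uses the hypothesis $p\circ i = \id$ to conclude $p_1|_C = \id$ and $p_1(1_X) = \id_X$. Since $p_2(1_X,f)\in I$ and $I$ is a dg ideal, the difference $f - \id_X\cdot f$ also lies in $I$, so $[\id_X]\cdot[f] = [f]$ in $L(C)$; the right-unit identity is symmetric. Next, I would construct the adjunction bijection. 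Given a weakly unital dg functor $F\colon C\to R(A)$, the commutativity of \eqref{mapwu} combined with $p_n^{R(A)} = 0$ for $n\geq 2$ (Example \ref{uwu}) forces $F(p_n^C(x_1,\dots,x_n)) = 0$ for $n\geq 2$, so that $F$ descends to a non-unital dg functor $\bar F\colon L(C)\to A$; the identity $F(\id_X) = \id_{F(X)}$ from \eqref{eq9} ensures $\bar F$ is unital. Conversely, given a unital dg functor $G\colon L(C)\to A$, compose with the canonical projection $C\to L(C)$ to get $F\colon C\to A = R(A)$ and check the commutativity of \eqref{mapwu}: for $n\geq 2$ both sides vanish tautologically, and for $n = 1$ it reduces to $G$ preserving the strict unit. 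Naturality in $C$ and $A$ is immediate.

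For part (2), by Proposition \ref{propho2} it suffices to show that $L(f)$ is a cofibration in $\Cat_\dg(\k)$ for every $f\in I$ and a trivial cofibration for every $f\in J$. The key observation is that for any dg graph $G$, the strictification $L(F(G))$ coincides with the \emph{free strictly unital} dg category on $G$: the dg ideal generated by the higher operations $p_n$ ($n\geq 2$) kills every operation in the free $\mathcal{O}$-algebra coming from generators other than $m$ and $j$, leaving exactly the free associative unital algebra construction. This identifies $L(\mathscr{P}(n)) = P(n)$ and $L(\mathscr{C}(n)) = \mathscr{C}(n)$ (with strict unit adjoined), while $L(\mathscr{A}) = \mathscr{A}$, $L(\mathscr{B}) = \mathscr{B}$, and $L(\mathscr{K}) = \mathscr{K}$ since these are already strictly unital. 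One then directly recognises $L(Q)$ and $L(\beta(n))$ as Tabuada's generating cofibrations, and $L(\kappa)$ and $L(\alpha(n))$ as his generating acyclic cofibrations.

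The main obstacle is the verification in part (1) that $[\id_X]$ is a strict unit in $L(C)$: although conceptually transparent, it requires one to unpack the $A_\infty$-relations \eqref{relop2} for $p$ and to use the hypothesis $p\circ i = \id$ in order to cancel the unwanted $p_1$ terms in the differential. Once this point is secured, the rest of (1) is a standard adjunction argument and (2) reduces to a formal matching of generating (acyclic) cofibrations between the Tabuada and the weakly unital model structures.
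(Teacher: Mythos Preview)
Your proposal is correct and follows essentially the same approach as the paper: for (1) you use \eqref{eq10} together with the vanishing of $p_n^{R(A)}$ for $n\ge 2$ to show that weakly unital maps $C\to R(A)$ factor through $L(C)$, and for (2) you invoke Proposition~\ref{propho2} after identifying $L$ of the generating (acyclic) cofibrations with Tabuada's. The paper's proof is terser---it takes for granted that $L(C)$ is strictly unital (which you verify via $dp_2(1_X,f)$) and simply asserts that $\{L(\beta(n)),L(Q)\}$ and $\{L(\alpha(n)),L(\kappa)\}$ are Tabuada's sets $I_T$ and $J_T$---but the substance is identical.
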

\begin{proof}
(1): any map $F\colon C\to R(A)$ in $\Cat_\dgwu(\k)$ sends $p_n^C(-,\dots,-)$ , $n\ge 2$ to 0, because $C$ is strictly unital, see \eqref{eq10}. Therefore, this map is the same that a map $L(C)\to A$ in $\Cat_\dg(\k)$.

(2): Clearly $\{L(\beta(n)), L(Q)\}$ form the set $I_T$ of generating cofibrations for the Tabuada closed model structure [Tab], and $\{L(\alpha(n)), L(\kappa)\}$ for the set $J_T$ of generating trivial cofibrations for this model structure. The statement follows from Proposition \ref{propho2}.
\end{proof}

\subsection{\sc }
Recall that a Quillen pair $L\colon \mathscr{C}_1\rightleftarrows \mathscr{C}_2\colon R$ is called a {\it Quillen equivalence} if the following condition holds:

For all cofibrant $X\in\mathscr{C}_1$ and all fibrant $Y\in\mathscr{C}_2$ a morphism $f\colon LX\to Y$ is a weak equivalence in $\mathscr{C}_2$ if and only if the corresponding morphism $g\colon X\to RY$ is a weak equivalence in $\mathscr{C}_1$, see e.g. [Hi, Sect. 8.5.19], [Ho, Sect. 1.3.3].

Recall that this condition implies that the corresponding adjoint pair between the homotopy categories \eqref{qe} is an adjoint {\it equivalence} of categories. 

\begin{theorem}\label{theorq2}
The Quillen pair of functors 
$$
L\colon \Cat_{\dgwu}(\k)\rightleftarrows \Cat_{\dg}(\k)\colon R
$$
is a Quillen equivalence. 
\end{theorem}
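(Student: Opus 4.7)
My plan is to verify the Quillen equivalence criterion directly. Since every object in both $\Cat_\dg(\k)$ and $\Cat_\dgwu(\k)$ is fibrant---the lifting properties (F1), (F2) and their Tabuada analogues are vacuous for the unique map to the terminal dg category---the Quillen equivalence condition reduces to showing: (i) for every $A\in \Cat_\dg(\k)$, the counit $\varepsilon_A\colon LRA\to A$ is a weak equivalence; and (ii) for every cofibrant $X\in \Cat_\dgwu(\k)$, the unit $\eta_X\colon X\to RLX$ is a weak equivalence.

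For (i), I would show that $\varepsilon_A$ is actually an isomorphism. The weakly unital structure on $RA=(A\oplus\k_A,p_\dg)$ has all higher Taylor components $p_n^{RA}=0$ for $n\geq 2$, coming from the strict dg functor of Example \ref{uwu}, so the ideal by which $L$ quotients is zero on the image of $R$; the surviving identification of $1_X\in \k_A$ with $\id_X^A$ (forced by $p_1^{RA}(1_X)=\id_X^A$ together with strict unitality) produces $LRA\cong A$ canonically.

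For (ii), the main step, I would pass to the operadic picture of Section \ref{subsectionmonadwu}: via the forgetful functor to dg graphs, $X$ is an $\mathcal{O}$-algebra, $LX$ is an $\Assoc_+$-algebra, and $L$ is the change-of-operad functor along the projection $p\colon \mathcal{O}\to\Assoc_+$. At the level of underlying dg graphs, $\eta_X$ is the natural comparison $X\to p^*(LX)$. Since $X$ is cofibrant it is a retract of an $I$-cell complex $X=\colim_\alpha X_\alpha$, where each $X_{\alpha+1}$ is obtained from $X_\alpha$ by a pushout along either $Q\colon \varnothing\to\mathscr{A}$ or some $\beta(n)\colon \mathscr{C}(n)\to\mathscr{P}(n)$. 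Attachments along $Q$ merely adjoin an isolated object and preserve the property trivially. The core case is the pushout along $\beta(n)$, which in the $\mathcal{O}$-algebraic picture amounts to freely adjoining (as an $\mathcal{O}$-algebra) a degree-$n$ arrow $\xi$ whose boundary $d\xi$ is a prescribed closed arrow in $X_\alpha$. The morphism complexes of $X_{\alpha+1}$ admit a natural filtration by the number of occurrences of $\xi$; the associated graded in filtration degree $k$ decomposes as a direct sum of tensor products built from pieces $\mathcal{O}(N)$, $k$ copies of the cone $D(n)$, and morphism complexes of $X_\alpha$. The analogous filtration on $LX_{\alpha+1}$ has $\Assoc_+(N)=\k$ in place of $\mathcal{O}(N)$. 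By Theorem \ref{theoremmop}, K\"unneth, and the inductive hypothesis applied to $X_\alpha$, the comparison on each associated graded piece is a quasi-isomorphism, and a standard spectral-sequence argument gives that $\eta_{X_{\alpha+1}}$ satisfies (W1). Filtered colimits and retracts preserve this, and (W2) is automatic: $\eta_X$ acts as the identity on objects and is fully faithful on $H^0$ by (W1), hence an equivalence of $\k$-linear categories.

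The main obstacle will be making the filtration and spectral-sequence argument precise on the genuine pushout $X_{\alpha+1}=X_\alpha\sqcup_{\mathscr{C}(n)}\mathscr{P}(n)$ in $\Cat_\dgwu(\k)$. These pushouts are not free $\mathcal{O}$-algebra constructions on dg graphs but are computed via the monad of Theorem \ref{wumonadic} (as in Proposition \ref{propcoeqmain}), so one needs an analogue of the ``normal form'' lemma for pushouts of operad-algebras, identifying the morphism complexes of $X_{\alpha+1}$ as the claimed direct sum of $\mathcal{O}$-decorated tensor products and verifying that this decomposition is compatible with the comparison map to $LX_{\alpha+1}$. Once this bookkeeping is settled, the quasi-isomorphism $\mathcal{O}\to\Assoc_+$ of Theorem \ref{theoremmop} is the sole nontrivial input that drives the induction step.
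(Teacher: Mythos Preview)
Your plan is sound and the reduction via the unit/counit criterion---exploiting that every object in both model categories is fibrant---is a legitimate simplification; your verification that $\varepsilon_A\colon LRA\to A$ is an isomorphism is correct. The paper, however, organizes the argument differently and thereby sidesteps the very obstacle you flag. Rather than inducting over cell attachments, it treats an $I$-cell $X$ globally: as a graded object such an $X$ is the free $\mathcal{O}$-algebra on its graph $V$ of generators, with differential twisted by the cell structure, and $LX$ is the free $\Assoc_+$-algebra on the same $V$ with the corresponding twist. The paper then compares the two cones $L_1=\Cone(LX(x,x^\prime)\to Y)$ and $L_2=\Cone(X(x,x^\prime)\to RY)$ directly and identifies $\Cone(L_2\to L_1)$ with the free $\bar{\mathcal{O}}$-construction on $V$, where $\bar{\mathcal{O}}=\Ker(\mathcal{O}\to\Assoc_+)$; acyclicity then follows from Theorem~\ref{theoremmop} and K\"unneth in one stroke. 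No analysis of the relative pushout $X_\alpha\sqcup_{\mathscr{C}(n)}\mathscr{P}(n)$ is needed, so your ``normal form'' lemma is bypassed entirely.

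One correction to your filtration sketch: in the associated graded by number of occurrences of $\xi$, the generator contributes $k$ copies of $\k\xi\cong S(n)$, not the cone $D(n)$, since $d\xi=\sigma_0$ lies in $X_\alpha$ and hence drops to zero in $\mathrm{Gr}^k$. (Had it really been $D(n)$, each $\mathrm{Gr}^k$ with $k\ge 1$ would be acyclic automatically and Theorem~\ref{theoremmop} would never enter---a warning sign.) With $S(n)$ in place the comparison on each $\mathrm{Gr}^k$ genuinely needs both the inductive hypothesis on $X_\alpha$ and Theorem~\ref{theoremmop}, and the tensor decomposition of $\mathrm{Gr}^k$ you assert is precisely the normal-form statement you left open. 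Your route would work once that bookkeeping is done, but the paper's global approach purchases a shorter path.
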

\begin{proof}
Let $X\in \Cat_\dgwu(\k)$ be cofibrant, and $Y\in\Cat_\dg(\k)$ fibrant (therefore, $Y$ is an arbitrary object). On has to prove that $f\colon LX\to Y$ is a weak equivalence iff the adjoint map $f^*\colon X\to RY$ also is. 

It is enough to prove the statement for the case when $X$ is an $I$-cell.
Indeed, by the small object argument, for any $X$ there exist an $I$-cell $X^\prime$ such that $p\colon X^\prime\to X$ is an acyclic fibration. The Quillen left adjoint $L$ maps the weak equivalences between cofibrant object to weak equivalences, by [Hi, Prop. 8.5.7]. Therefore, $L(p)\colon L(X^\prime)\to L(X)$ is a weak equivalence. There is a map $i\colon X\to X^\prime$ such that $p\circ i=\id$, given by the RLP. By 2-of-3 axiom, $i$ is a weak equivalence, and $L(i)$ also is. 

Assume $L(X)\xrightarrow{f} Y$ is a weak equivalence, then $L(X^\prime)\xrightarrow{L(p)\circ f} Y$ is also a weak equivalence. 
If we know that the adjoint map $(f\circ L(p))^*\colon X^\prime\to R(Y)$ is a weak equivalence, then the adjoint map $f^*=(f\circ L(p))^*\circ i$ is also a weak equivalence. The converse statement is proven similarly.

Consider the case when $X$ is an $I$-cell for $\Cat_{\dgwu}(\k)$. We reduce this case of the statement to Theorem \ref{theoremmop}.

Denote by $V$ the graded graph of generators of $X$. Prove that for any objects $x,x^\prime\in X$, $y\in Y$, the cone 
$L_1=\Cone(LX(x,x^\prime))\xrightarrow{f} Y(fx,fx^\prime))$ is acyclic iff the cone $L_2=\Cone(X(x,x^\prime)\xrightarrow{f^*}RY(f^*x,f^*x^\prime))$ is acyclic. Denote $\bar{O}=\Ker(P\colon \mathcal{O}\to\Assoc_+)$, where $P$ is the dg operad map sending all $p_{n;-}$ to 0. There is a canonical map $\omega\colon L_2\to L_1$, and $\Cone(\omega)$ is quasi-isomorphic $F_{\bar{\mathcal{O}}}(V)(x,x^\prime)$, where $F_{\bar{\mathcal{O}}}(V)$ is the free algebra over $\bar{\mathcal{O}}$ generated by $V$, with an extra differential coming from the differential in the $I$-cell $X$. By Theorem \ref{theoremmop}, $\bar{O}$ is acyclic. 
Therefore, $F_{\bar{\mathcal{O}}}(V)$ is acyclic by the K\"{u}nneth formula. Therefore, $\Cone(\omega)$ is acyclic, and $L_1$ is quasi-isomorphic to $L_2$. Therefore, $L_1$ is acyclic iff $L_2$ is.

\end{proof}

\section{\sc A proof of Theorem \ref{theoremmop}}\label{sectionmop}

\subsection{\sc The dg operad $\mathcal{O}^\prime$ and its cohomology}
Recall that the dg operad $\mathcal{O}$ is generated by an $n$-ary operations $p_{n;n_1,\dots,n_k}$, acting as \\$p_{n}(f_1,\dots, f_{n_k-1},\underset{n_k}{1},f_{n_k+1},\dots)$, a binary operation $m$, with the relations and the differential as in \eqref{orel}. 

Define a dg operad $\mathcal{O}^\prime$, for which the dg operad $\mathcal{O}$ is a quotient-operad, as follows. The definition of $\mathcal{O}^\prime$ is similar to $\mathcal{O}$, but for the case of $\mathcal{O}^\prime$ we drop the relation $p_n(1,1,\dots,1)=0$ for $n\ge 2$, which holds in $\mathcal{O}$. 
We set $j=p(1)$, and thus $dp_2(1,1)=m(j,j)-j\ne 0$, $dp_3(1,1,1)=m(p(1),p_2(1,1))-m(p_2(1,1),p(1))$, and so on. 
The other relations and identities from \eqref{orel} remain the same.

There is a natural map of dg operads $P\colon \mathcal{O}^\prime\to\Assoc_+$, sending all $p_{n;\dots}$, $n\ge 2$, to 0. 
\begin{theorem}\label{theoremcomp1}
The map of dg operads $P\colon\mathcal{O}^\prime\to\Assoc_+$ is a quasi-isomorphism. 
\end{theorem}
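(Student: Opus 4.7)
The plan is to compute $H^*(\mathcal{O}^\prime(n))$ arity-by-arity via a spectral sequence associated to a filtration by the maximum arity of a $p$-vertex.

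Using the associativity of $m$ and the relations $p_{1;1}=j$, $p_{1;-}=\id$, every element of $\mathcal{O}^\prime(n)$ is first brought into a reduced form: a linear combination of planar trees whose internal vertices alternate between $m$-corollas $m_\ell$ ($\ell\ge 2$) and $p$-vertices $p_{r;i_1,\dots,i_k}$ ($r\ge 2$), the leaves being either one of the $n$ ordered inputs or a copy of $j$. These reduced monomials form a $\k$-basis. Define $F_s\mathcal{O}^\prime(n)$ to be the subspace spanned by reduced trees in which every $p$-vertex has arity $\le s$, so that $F_1\mathcal{O}^\prime(n)$ consists of trees built only from $m$-corollas, inputs, and $j$'s. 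Inspection of \eqref{relop2} shows that every term in $dp_{r;\vec i}$ involves only $p$-vertices of arity strictly less than $r$: the first sum produces $p_\ell$ and $p_{r-\ell}$ with $\ell,r-\ell<r$, while the second sum produces $p_{r-1}$. Hence $F_\bullet$ is an exhaustive, bounded-below subcomplex filtration, and since $d$ strictly decreases it, the associated spectral sequence has $d_0=0$ and $E_1\cong\gr_\bullet\mathcal{O}^\prime(n)$ as a bigraded vector space.

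The heart of the proof is to show that the spectral sequence converges to $\Assoc_+(n)=\k$. The first non-trivial differential $d_1\colon E_1^s\to E_1^{s-1}$ is the arity-dropping-by-one component of $d$. For $s=2$ the explicit computation gives (up to signs) $d_1p_{2;1}=\id-m\circ(j,\id)$, $d_1p_{2;2}=\id-m\circ(\id,j)$, and $d_1p_{2;1,2}=j-m\circ(j,j)$, which are precisely the generators of the two-sided ideal in $F_1\mathcal{O}^\prime(n)$ whose quotient is $\Assoc_+(n)$. Consequently $d_1$-cohomology already reduces the bottom piece to $\Assoc_+(n)$ in cohomological degree $0$. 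One proves inductively on $r\ge 2$ that the sub-complex formed by the generators $\{p_{r;\vec i}\}$ together with the contraction and splitting terms of $d$ is a truncation of the reduced bar complex of $\Assoc_+$, i.e.\ the standard resolution computing $\Tor^{\Assoc_+}(\k,\k)$, and is therefore acyclic. This provides, level by level, the contracting homotopies that kill all $E_r^s$ for $s\ge 2$ and identify $E_\infty^1$ with $\Assoc_+(n)$.

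The main technical obstacle is the recursive identification of the $p_{r;\vec i}$-complexes with truncated reduced bar complexes, together with the careful tracking of Koszul signs from \eqref{relop2}. One must verify that the contracting homotopies propagate coherently across the outer $m$-corollas and leaves of the trees, and that the induction on $r$ closes without the appearance of new cycles at any spectral-sequence page. Once these combinatorial points are under control, the spectral sequence degenerates to $\Assoc_+(n)$ concentrated in degree $0$, yielding the claimed quasi-isomorphism.
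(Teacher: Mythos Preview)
Your filtration by the maximal arity of a $p$-vertex does not behave as you claim. The assertion that $d$ strictly decreases $F_\bullet$, and hence that $d_0=0$, fails as soon as a tree carries more than one $p$-vertex. Take $T=m\circ(p_{3;1},p_{5;2,4})\in F_5\setminus F_4$: the Leibniz summand $m\circ(dp_{3;1},p_{5;2,4})$ still contains the untouched $p_5$-vertex, so it lies in $F_5\setminus F_4$ and contributes nontrivially to $d_0$. In general, differentiating any $p$-vertex of non-maximal arity, or one of several vertices sharing the maximal arity, leaves the filtration degree unchanged. Thus $E_1$ is not $\gr_\bullet\mathcal{O}^\prime(n)$ with zero differential, and the subsequent analysis (the $d_1$ computation, the inductive identification with truncated bar complexes) rests on a false premise. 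Your alternating normal form is also not available: nothing in the relations of $\mathcal{O}^\prime$ rewrites a composite such as $p_{3;1}\circ_1 p_{2;2}$ as an $m$-combination of single $p$-vertices, so adjacent $p$-vertices genuinely occur.

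The paper avoids this by filtering instead by the quantity $\sharp(T)-\sharp_p(T)$, where $\sharp(T)$ is the total number of operadic inputs to $p$-vertices and $\sharp_p(T)$ counts those $p$-vertices that are not of the form $p_n(1,\dots,1)$. This quantity is additive over the vertices, so the Leibniz rule interacts with it cleanly: the resulting $d_0$ is nonzero but factors through the groups of consecutive $1$'s inside each $p$-vertex, and the $E_0$-page decomposes as a tensor product of small explicit complexes (one of them the cobar complex of the cofree coalgebra on $\k[1]$). Their acyclicity forces collapse at $E_1$. If you want to salvage a max-arity filtration you would have to analyse the nonzero $d_0$ you actually obtain, which couples all $p$-vertices in a tree and looks no easier than the paper's route; an additive-over-vertices invariant is what makes the argument go through.
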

\begin{proof}
Let $\omega\in\mathcal{O}^\prime$. Then $\omega$ is a linear combination of labelled ``trees'', where each vertex (excluding the leaves) is labelled either by $p_{n; n_1,\dots,n_k}$ or by $m$. We say that  $p_{n;n_1,\dots,n_k}$ has {\it $n-k$ operadic arguments} (the remaining $k$ arguments are 1's). We use notation $\sharp(p_{n;n_1,\dots,n_k})=n-k$. Given a tree $T$ in which a vertex $v$ is labelled by $p_{n;n_1,\dots,n_k}$, we write $\sharp(v)=n-k$. We extend $\sharp(-)$ to all vertices of $T$, by setting $\sharp(v)=0$ if $v$ is labelled by $m$. Denote by $V_T$ the set of all vertices of $T$ excluding the leaves.

For a given tree $T$, denote 
$$
\sharp(T)=\sum_{v\in V_T}\sharp(v)
$$
We also denote by $\sharp_p(T)$ the total number of vertices with $p_{\dots}$, {\it excluding} $p_1(1),p_2(1,1),\dots$. 

Define a descending filtration $F_\ldot$ on $\mathcal{O}^\prime$, as follows. Its $(-\ell)$-th term $F_{-\ell}$ is formed by linear combinations of labelled trees $T$ for which
$$
\sharp(T)-\sharp_p(T)\le \ell
$$
Note that for any tree $T$ one has $\sharp(T)-\sharp_p(T)\ge 0$.

One has:
$$
\dots\supset F_{-3}\supset F_{-2}\supset F_{-1}\supset F_0\supset  0
$$

Note that $dF_{-\ell}\subset F_{-\ell}$, and any component of the differential on $\mathcal{O}^\prime$ either preserves $\sharp(T)-\sharp_p(T)$ or decreases it by 1. 

We get a similar filtration $F_\ldot$ on the component $\mathcal{O}^\prime(N)$ of the airity $N$ operations. 

We compute cohomology of $\mathcal{O}^\prime(N)$ using the spectral sequence associated with filtration $F_\ldot$ on $\mathcal{O}^\prime(N)$. The spectral sequence lives in the quadrant $\{x\le 0,y\le 0\}$, the differential $d_0$ is horizontal. One easily sees that the spectral sequence converges. In fact, we show the spectral sequence collapses at the term $E_1$.

\begin{lemma}\label{lemmacomp1}
Consider the filtration $F_\ldot$ on $\mathcal{O}^\prime(N)$. One has:
$$E_1^{-\ell,m}=\begin{cases} \Assoc_+(N)& \ell=0,m=0\\0&\text{otherwise}\end{cases}$$
In particular, the spectral sequence collapses at the term $E_1$.
\end{lemma}
\begin{proof}
We write $p_{n;n_1,\dots,n_k}$ as $p_n(f_1,f_2,\dots, 1,\dots,f_{n-k})$ where $f_1,\dots,f_{n-k}$ are operadic arguments, and 1s stand on the places $n_1,n_2,\dots,n_k$. In these notations, describe the differential in $E_0^{-\ell,\ldot}=F_{-\ell}/F_{-\ell+1}$.

It has components of the following three types, which we refer to as Type I, Type II and Type III components.

\vspace{2mm}

{\it Type I components:} a component of Type I acts on a group of consequtive 1s, surrounded by operadic arguments from both sides, such as $$p_n(\dots,f_s,\underset{\text{a group of $i$ consequtive 1s}}{\underbrace{1,1,\dots,1}},f_{s+1},\dots)$$ For such a group, the component of $d_0$ is a sum of expressions, each summand of which is corresponded to either a product $1\cdot 1$ of two consequtive 1s, or to extreme products $f_s\cdot 1$ or $1\cdot f_{s+1}$, taken with alternated signs. It is clear that totally the component $d_0^S$ corresponded to such a group $S$ is equal to
$$
d_0^S(p_n(\dots, f_s,\underset{\text{$i$ of 1s in the group $S$}}{\underbrace{1,\dots,1}},f_{s+1},\dots))=\begin{cases}\pm p_n(\dots,f_s,\underset{\text{$i-1$ of 1s}}{\underbrace{1,1,\dots,1}},f_{s+1},\dots)&\text{if  $i$ is even}\\
0&\text{if  $i$ is odd}\end{cases}
$$

{\it Type II components:} a component of Type II acts on the groups of leftmost (corresp., rightmost) 1s, such as $p_n(1,1,\dots,1,f_1,\dots)$ or $p_n(\dots,f_{n-k},1,1,\dots,1)$, surrounded by an operadic argument from one side. There should be $\ge 1$ of 1s in the group for a non-zero result, and by assumption $p_n(\dots)$ contains at least one operadic argument. 

The corresponding component $d_0^S$ of the differential is a sum of two subcomponents: $d_0^S=d_0^{S,1}+d_0^{S,2}$. 

The first subcomponent $d_0^{S,1}=d_0^{S,1,-}\pm d_0^{S,1,+}$, where
$$
\begin{aligned}
\ &d_0^{S,1,-}(p_n(\underset{\text{$i$ of 1s}}{\underbrace{1,\dots,1}},f_1,\dots))=\\
&p_n(1\cdot 1,1,\dots,1,f_1,\dots)-p_n(1,1\cdot 1,\dots,f_1,\dots)+\dots+(-1)^{i-1} p_n(1,\dots,1,1\cdot f_1,\dots)
\end{aligned}
$$
and similarly for $d_0^{S,1,+}$ for the group of rightmost 1s. 

One has 
$$
d_0^{S,1,-}(p_n(\underset{\text{$i$ of 1s}}{\underbrace{1,\dots,1}},f_1,\dots))=\begin{cases}p_n(\underset{\text{$i-1$ of 1s}}{\underbrace{1,\dots,1}},f_1,\dots)&\text{if $i$ is odd}\\
0&\text{if $i$ is even}
\end{cases}
$$
and similarly for $d_0^{S,1,+}$.

The second subcomponent $d_0^{S,2}=d_0^{S,2,-}\pm d_0^{S,2,+}$, where 
$$
\begin{aligned}
\ &d_0^{S,2,-}(p_n(\underset{\text{$i$ of 1s}}{\underbrace{1,\dots,1}},f_1,\dots))=\\
&p_1(1)\cdot p_{n-1}(\underset{i-1}{1,\dots,1},f_1,\dots)-p_2(1,1)\cdot p_{n-2}(\underset{i-2}{1,\dots,1},f_1,\dots)+\dots+(-1)^{i-1}p_i(1,1,\dots,1)\cdot p_{n-i}(f_1,\dots)
\end{aligned}
$$
and similarly for $d_0^{S,2,+}$ for the rightmost group of 1s.

One checks that all other components of the differential $d$ on $\mathcal{O}^\prime$ decrease $\sharp(T)-\sharp_p(T)$ by 1.

{\it Type III components:} Here we have $d_0$ acting on $p_n(\underset{\text{$n$ of 1s}}{1,1,\dots,1})$.

One has:
\begin{equation}\label{d0d}
\begin{aligned}
\ &d_0(p_n(1,1,\dots,1))=\\
&p_{n-1}(1\cdot 1,1,\dots,1)-p_{n-1}(1,1\cdot 1,1,\dots,1)+\dots+(-1)^{i-1}p_{n-1}(1,1,\dots,1\cdot 1)+\\
& \pm\sum_{1\le i\le n-1}(-1)^{i-1}p_i(1,1,\dots,1)\cdot p_{n-i}(1,1,\dots,1)+
\end{aligned}
\end{equation}

Denote the first summand by $d_{0}^{S,1}$ and the second summand by $d_0^{S,2}$ One sees that
$$
d_0^{S,1}(p_n(1,1,\dots,1))=\begin{cases}
p_{n-1}(1,1,\dots,1)&\text{if $n$ is even}\\
0&\text{if $n$ is odd}
\end{cases}
$$

\vspace{2mm}

The computation of cohomology of the complex $(E_0^{-\ell,\ldot},d_0)$ is reduced to the computation of the cohomology of a tensor product of complexes (the factors are labelled by combinatorial data of the labelled tree $T$), corresponded to different components $S$ as listed above:
\begin{equation}
E_0^{-\ell,\ldot}=\bigotimes_{S,T}K^\udot_S
\end{equation}

The complexes $K_S$ corresponded to Type I components are isomorphic to 
\begin{equation}\label{complexk}
K^{\udot}=\{\dots\xrightarrow{0}\underset{i=4}{\k}\xrightarrow{\id}\underset{i=3}{\k}\xrightarrow{0}      \underset{i=2}{\k}\xrightarrow{\id}\underset{\deg=-1}{\underset{i=1}{\k}}\to 0\}
\end{equation}
The complex $K^\udot$ is acyclic in all degrees. It implies that the complex $(E_0^{-\ell,\ldot},d_0)$ is quasi-isomorphic to its subcomplex which is formed by the trees in which any $p$ is of the type\\ $p_n(1,1,\dots,1,f_1,\dots,f_{n-k},1,\dots,1)$, where all $n-k$ operadic arguments stand in turn, without 1s between them. 

It remains to treat the Type II and Type III cases. 

The complexes whose cohomology we need to compute are of two types. They are formed either by linear combinations of 
$$
p_{n_1}(1,1,\dots,1)\cdot p_{n_2}(1,1,\dots,1)\dots p_{n_k}(1,1,\dots,1)\cdot p_n(1,1,\dots,1,f_1,\dots)
$$
or by all linear combinations of
$$
p_{n_1}(1,1,\dots,1)\cdot p_{n_2}(1,1,\dots,1) {\dots} p_{n_k}(1,1,\dots,1)
$$
Denote them by $K_1^\udot$ and $K_2^\udot$. 

Their cohomology are computed similarly, we consider the case of $K_2^\udot$, leaving the case of $K_1^\udot$ to the reader.

Denote $p_\ell=p_\ell(1,1,\dots,1)$ and by $P_\ell$ the 1-dimensional vector space $\k p_\ell(1,1,\dots,1)=\k p_\ell$, $\ell\ge 1$.

One has:
$$
K_2^{-n}=\bigoplus_{k\ge 1,\ n_1+\dots+n_k-k=n}P_{n_1}\otimes P_{n_2}\otimes\dots\otimes P_{n_k}
$$
We denote the differential $d_0$ on $K_2^\udot$, see \eqref{d0d}, by $d$. 
\begin{lemma}\label{lemmacomp2}
The complex $(K_2^\udot, d)$ is quasi-isomorphic to $P_1[0]$. 
\end{lemma}
\begin{proof}
Consider on $K_2^\udot$ the following descending filtration $\Phi_\ldot$, where
$$
\Phi_{-\ell}=\bigoplus_{n_1+n_2+\dots+n_k\le \ell} P_{n_1}\otimes P_{n_2}\otimes\dots\otimes P_{n_k}
$$
One has
$$
\dots \supset\Phi_{-3}\supset \Phi_{-2}\supset \Phi_{-1}\supset \Phi_0= 0
$$
$$
d\Phi_{-\ell}\subset \Phi_{-\ell}
$$
Denote by $d_{0,\Phi}$ the differential in $E_{0,\Phi}^{-\ell,\ldot}=\Phi_{-\ell}/\Phi_{-\ell+1}$. It is given by 
\begin{equation}\label{diffcobar0}
d_{0,\Phi}(p_{n_1}\otimes p_{n_2}\otimes \dots\otimes p_{n_k})=\sum_{i=1}^k(-1)^{n_1+\dots+n_{i-1}-i+1}p_{n_1}\otimes\dots\otimes d_{0,\Phi}(p_{n_i})\otimes\dots\otimes p_{n_k}
\end{equation}
where
\begin{equation}\label{diffcobar}
d_0(p_n)=\sum_{1\le i\le n-1}(-1)^{i-1}p_i\otimes p_{n-i}
\end{equation}
It is well-known that the complex $E_{0,\Phi}^{-\ell,\ldot}$ is acyclic when $\ell\ge 2$, and is quasi-isomorphic to $P_1[0]$ when $\ell=1$.

We can identify $P_n\simeq (\k[1])^{\otimes n}$, then $\oplus_{n\ge 1}\k[1]^{\otimes n}=P$ becomes the (non-unital) cofree coalgebra cogenerated by $\k[1]$. The complex \eqref{diffcobar0}, \eqref{diffcobar} is identified with the cobar-complex of the cofree coalgebra $P$. It is standard that its cohomology is equal to $\k[1][-1]\simeq \k$. 

Therefore, the spectral sequence collapses at the term $E_1$ by dimensional reasons. 

It completes the proof of Lemma \ref{lemmacomp2}.

\end{proof}

Similarly we prove that $K_1^\udot$ is acyclic in all degrees.

In this way we see that any cohomology class in $E_0^{-\ell,\ldot}$ can be represented by a linear combination of trees which do not contain $p_n$s with $n\ge 2$. 

It follows that any cohomology class can be represented by a linear combination of trees containing only $m$ and $p(1)$, and all such trees have cohomological degree 0. 

It completes the proof. 

\end{proof}
Theorem \ref{theoremcomp1} immediately follows from Lemma \ref{lemmacomp1}.

\end{proof}

\subsection{\sc The cohomology of the dg operad $\mathcal{O}$}\label{subsectionterug}
We are to prove Theorem \ref{theoremmop}. 
\begin{proof}
The dg operad $\mathcal{O}$ is the quotient-operad of $\mathcal{O}^\prime$ by the dg operadic ideal $I$ generated by $p_n(1,\dots,1)$, $n\ge 2$. It is enough to prove that $I$ is acyclic. It would be natural to deduce the acyclicity of $I$ from the acyclicity of the complex $K_3^\udot=K_2^\udot/(\k p_1(1))$, established above, by application of the K\"{u}nneth formula. However, the K\"{u}nneth formula is not applicable, because we do {\it not} have a decomposition such as $I=\mathcal{O}^\prime \circ K_3^\udot\circ \mathcal{O}^\prime$, compatible with the differential. 

Alternatively, we repeat the arguments in the proof of Theorem \ref{theoremcomp1}. The main point is that the filtration $F_\ldot$ on $\mathcal{O}^\prime$,
defined in the course of the proof of Theorem \ref{theoremcomp1}, descends to $\mathcal{O}^\prime/I$. Indeed, both numbers $\sharp(T)$ and $\sharp_p(T)$ are well-defined on the quotient $\mathcal{O}^\prime/I$. The statement of Lemma \ref{lemmacomp1} holds in this case, and its proof follows the same line. It becomes even simpler, because for Type II and Type III summands we make use that $p_n(1,\dots,1)=0$ for $n\ge 2$, which substantially simplifies the computation. 
\end{proof}

\appendix 

\section{\sc The Drinfeld dg quotient and the Kontsevich dg category $\mathscr{K}$}
Here we reconstruct the proof of Lemma \ref{lemmadrk} sketched in [Dr, 3.7].

In this Appendix, we denote by $X_0,X_1$ the objects of the dg category $\mathscr{I}_0$, generated by a closed degree 0 morphism $f\in\mathscr{I}_0(X_0,X_1)$ (our former notations for these objects were 0 and 1). Then define $\mathscr{I}:=\mathscr{I}_0^\pretr$, and $\mathscr{D}:=\mathscr{I}/\mathscr{J}$ where $\mathscr{J}$ is the full dg subcategory with a single object $\Cone(f)$. Finally, consider the full dg subcategory $\mathscr{D}_0$ of $\mathscr{D}$, whose objects are $X_0$ and $X_1$. Lemma \ref{lemmadrk} states that $\mathscr{D}_0$ is isomorphic to $\mathscr{K}$, the Kontsevich dg category, introduced in Section \ref{sectionijwu}.

\sevafigc{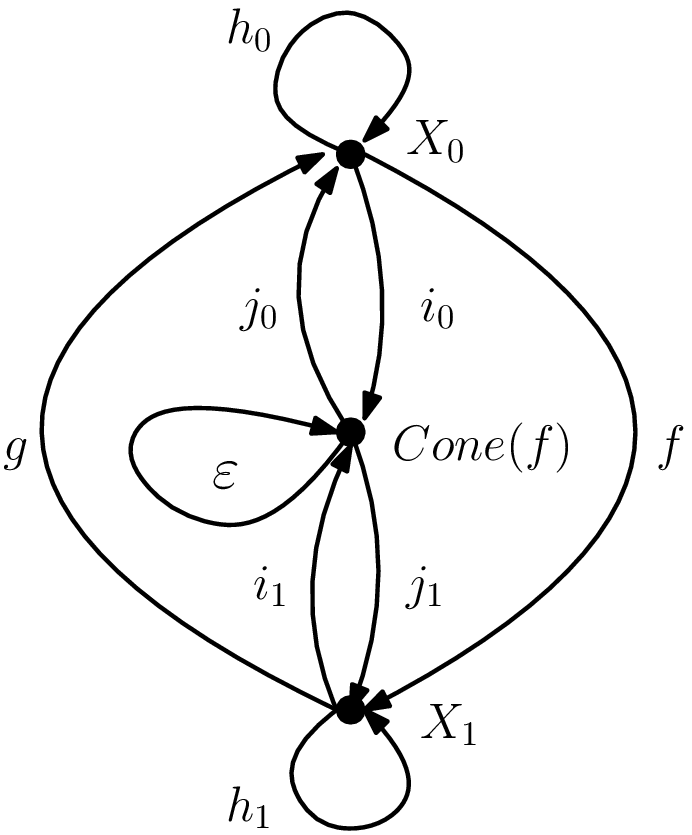}{60mm}{0}{The derivation of the Kontsevich dg category $\mathscr{K}$ from the Drinfeld dg quotient.}

To describe $\mathscr{D}_0$ explicitly, consider the fragment of the dg category $\mathscr{D}$, drawn in Figure 1. 

We start with the morphism $f$ of degree 0, $df=0$.

Then there are morphisms (in notations of Figure 1):
\begin{itemize}
\item $i_1$ of degree 1, $j_0$ of degree 0, 
\item $i_0$ of degree 0,  $j_1$ of degree -1, 
\item $\varepsilon$ of degree -1 (it is the morphism which was added in passage to the Drinfeld dg quotient).
\end{itemize}
One has:
\begin{equation}\label{eqa1}
j_0i_0=\id,\ \ j_1i_1=\id, \ \  j_1i_0=0, \ \ j_0i_1=0,\ \  i_0j_0+ i_1j_1=\id_{\Cone(f)}
\end{equation}
and
\begin{equation}\label{eqa2}
di_1=0,\ \ dj_0=0,\ \ di_0=i_1f,\ \ dj_1=fj_0, \ \ d\varepsilon=\id_{\Cone(f)}
\end{equation}
On the basis of these morphisms we define
\begin{equation}\label{eqa3}
g:=  j_0\varepsilon i_1, \ \ h_0:=j_0\varepsilon i_0,\ \  h_1:=j_1\varepsilon i_1, \ \ r:=j_1\varepsilon i_0
\end{equation}
One checks directly from \eqref{eqa1} and \eqref{eqa2} that the relations \eqref{kcat1},\eqref{kcat2} hold for these morphisms. 
One can show that the full dg subcategory $\mathscr{D}_0$ of $\mathscr{D}$, whose objects are $X_0$ and $X_1$, is generated by $f,g,h_0,h_1,r$, and the relations as above. 

It identifies the Kontsevich dg category $\mathscr{K}$ with a full subcategory in the dg quotient. Then, the standard results such as [Dr, 3.4] are applied to compute the cohomology of all Hom complexes in $\mathscr{K}$. See [Dr, 3.7.2-3.7.4].

\comment

\section{\sc Acyclicity of the bar complex of a weakly unital dg algebra}
This Appendix is written in aim to make more reader-friendly the formulas for homotopies used in the proof of Theorem \ref{theoremcomp1}, see \eqref{homotopy1}, \eqref{homotopy2}, \eqref{homotopy3}. Here we show how a similar argument works in the simplest possible situation. 

Let $A$ be an associative dg algebra with (strict) unit $j$, $\Bar(A)$ its bar-complex. It is well-known that $\Bar(A)$ is acyclic, and it is proved by a very simple homotopy
$$
h(a_1\otimes a_2\otimes \dots \otimes a_n)=j\otimes a_1\otimes a_2\otimes\dots\otimes a_n
$$
Now assume that $A$ is a weakly unital dg algebra, in the sense of [KS], see Definition \ref{defwu}.
Define $\Bar(A)$ as in the classical case. Denote by $j$ the weak unit. 

\begin{lemma}
The complex $\Bar(A)$ is acyclic in all degrees. 
\end{lemma}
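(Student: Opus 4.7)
The plan is to mimic the classical unital proof by constructing an explicit contracting homotopy $h\colon \Bar(A)\to\Bar(A)$ of degree $-1$ satisfying $dh+hd=\id$, using the weak unit $j$ and the higher Taylor components $p_n$ of the $A_\infty$ functor $p\colon A\oplus\k\to A$ to compensate for the failure of strict unitality. The naive formula $h_0(a_1\otimes\dots\otimes a_n)=j\otimes a_1\otimes\dots\otimes a_n$ fails because $j\cdot a_1\ne a_1$ in general; but the defect $j\cdot a_1-a_1$ is exactly $dp_2(1,a_1)$, which suggests building corrections out of the $p_k$.

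Concretely, I would define
$$
h(a_1\otimes\dots\otimes a_n)=\sum_{k=0}^{n}(-1)^{\epsilon_k}\,p_{k+1}(1,a_1,\dots,a_k)\otimes a_{k+1}\otimes\dots\otimes a_n,
$$
with appropriate Koszul signs $\epsilon_k$. The $k=0$ term is $p_1(1)\otimes a_1\otimes\dots\otimes a_n=j\otimes a_1\otimes\dots\otimes a_n$, reproducing the classical formula, and the higher-$k$ terms are correction terms.

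To verify $dh+hd=\id$, I would expand $d\bigl(p_{k+1}(1,a_1,\dots,a_k)\bigr)$ using the $A_\infty$ relation for $p$, which yields three types of terms: (i) $p_{k+1}$ applied to tensors with $d(a_i)$ moved inside, (ii) $p_{k}$ applied to tensors where $1\cdot a_1$ or some $a_{i-1}\cdot a_i$ has been contracted, and (iii) products $p_s(\ldots)\cdot p_{r}(\ldots)$. Using $p_1(a)=a$ for $a\in A$, $p_1(1)=j$, and the vanishing $p_k(a_1,\dots,a_k)=0$ for $k\ge 2$ with all arguments in $A$ (which come from $p\circ i=\id_A$), these contributions pair up with the bar differential in $hd(\omega)$ and with the external bar multiplication acting on the left factor of $h(\omega)$: the $d(a_i)$ terms cancel against $h$ applied to the internal bar differential, the inner multiplication terms cancel against $h$ applied to the external bar differential, and the right-end product $p_k(1,a_1,\dots,a_{k-1})\cdot a_k$ cancels against the external differential applied to the first two tensor factors of $h(\omega)$. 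The sole surviving contribution is the left-end contraction $1\cdot a_1=a_1$ coming from $dp_2(1,a_1)$, which gives exactly $a_1\otimes a_2\otimes\dots\otimes a_n=\omega$; the analogous left-end terms for $k\ge 2$ involve $p_k(a_1,\dots,a_k)=0$ and drop out.

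Conceptually, the formula is just $h=p_*\circ h_{A\oplus\k}\circ i_*$, where $h_{A\oplus\k}(\omega)=1\otimes\omega$ is the classical contracting homotopy on $\Bar(A\oplus\k)$ (valid because $A\oplus\k$ is strictly unital), $i_*\colon\Bar(A)\to\Bar(A\oplus\k)$ is induced by the strict inclusion, and $p_*\colon\Bar(A\oplus\k)\to\Bar(A)$ is the dg coalgebra map induced by the $A_\infty$ map $p$ via the standard equivalence. The identity $p\circ i=\id_A$ gives $p_*i_*=\id_{\Bar(A)}$, so $dh+hd=p_*(dh_{A\oplus\k}+h_{A\oplus\k}d)i_*=p_*i_*=\id$, and unwinding the definition of $p_*$ with the vanishing $p_k|_A=0$ ($k\ge 2$) reproduces the explicit formula above. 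The only real obstacle is bookkeeping Koszul signs; these are forced by the $A_\infty$ formalism and pose no conceptual difficulty.
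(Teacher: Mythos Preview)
Your proposal is correct and takes essentially the same approach as the paper: both construct the identical contracting homotopy $h(a_1\otimes\dots\otimes a_n)=\sum_{k=0}^{n}\pm\, p_{k+1}(1,a_1,\dots,a_k)\otimes a_{k+1}\otimes\dots\otimes a_n$ and verify $dh+hd=\id$ directly from the $A_\infty$ relation for $p$. Your additional conceptual description $h=p_*\circ h_{A\oplus\k}\circ i_*$ is not in the paper and is a genuine improvement in clarity, since it explains \emph{why} this formula works rather than merely checking it.
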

\begin{proof}
Define a contracting homotopy by
\begin{equation}\label{homotopy4}
\begin{aligned}
\ &h(a_1\otimes a_2\otimes\dots\otimes a_n)=p_1(1)\otimes a_1\otimes a_2\otimes\dots\otimes a_n\pm\\
&p_2(1,a_1)\otimes a_2\otimes\dots\otimes a_n\pm p_3(1,a_1,a_2)\otimes a_3\otimes\dots \otimes a_n\pm\dots\pm\\
&p_{n+1}(1\otimes a_1\otimes a_2\otimes\dots\otimes a_n)
\end{aligned}
\end{equation}
Recall that $p_1(1)=j$ is the weak unit. 

Recall that
$$
\begin{aligned}
\ &[d,p_{k+1}(1,a_1,\dots,a_k)]=p_k(1,a_1,\dots,a_{k-1})*a_k\pm\\
&p_k(1,a_1*a_2,a_3,\dots)\pm p_k(1,a_1,a_2*a_3,\dots)\pm\dots\pm p_k(1,a_1,a_2,\dots,a_{k-1}*a_k)
\end{aligned}
$$
(see \eqref{relop2}).

It is left to the reader to check that
$$
dh+hd=\id
$$
\end{proof}

\endcomment

\bigskip
{\noindent\sc P.P.:}

{\small
\noindent {\sc Universiteit Antwerpen, Campus Middelheim, Wiskunde en Informatica, Gebouw G\\
Middelheimlaan 1, 2020 Antwerpen, Belgi\"{e}}}

\bigskip

\noindent{{\it e-mail}: {\tt Piergiorgio.Panero@uantwerpen.be}}

\bigskip
{\noindent\sc B.Sh.:}

{\small
\noindent {\sc Universiteit Antwerpen, Campus Middelheim, Wiskunde en Informatica, Gebouw G\\
Middelheimlaan 1, 2020 Antwerpen, Belgi\"{e}}}
\bigskip

{\small
\noindent{\sc Laboratory of Algebraic Geometry,
National Research University Higher School of Economics,
Moscow, Russia}}

\bigskip

\noindent{{\it e-mail}: {\tt Boris.Shoikhet@uantwerpen.be}}


\begin{thebibliography}{999}
{\footnotesize

\bibitem[Ba]{Ba} M.A.Batanin, Monoidal globular categories as a natural environment for the theory of weak n-categories,  {\it Adv. Math.} {\bf 136}(1) (1998), 39-103

\bibitem[BCSW]{BCSW} R.Betti, A.Carboni, R.Street, R.Walters, Variation through enrichment, {\it Journal of Pure and Applied Algebra}, {\bf 29} (1983), 109-127

\bibitem[COS]{COS} A.Canonaco, M.Ornaghi, P.Stellari, Localizations of the category of $A_\infty$ categories and internal Homs, preprint arXiv 1811.07830

\bibitem[Dr]{Dr} V.Drinfeld, DG quotients of DG categories, {\it J. Algebra} {\bf 272}(2),  (2004), 643-691

\bibitem[DS]{DS} W. G. Dwyer, J. Spalinski, Homotopy theories and model categories, in: {\it Handbook on Algebraic Topology}, Elsevier, 1995

\bibitem[GJ]{GJ} P.G.Goerss, J.F.Jardine, {\it Simplicial Homotopy Theory}, Birkh\"{a}user Progress in Mathematics, Vol. 174, 1999


\bibitem[GS]{GS} P.Goerss and K.Schemmerhorn, Model Categories and Simplicial Methods, Notes from lectures given at the University of Chicago, August 2004, available at P.Goerss' webpage

\bibitem[Hi]{Hi} Ph.S.Hirschhorn, {\it Model Categories and Their Localizations}, AMS Mathematical Surveys and Monographs, Vol. 99, 2003

\bibitem[Ho]{Ho} M.Hovey, {\it Model categories}, AMS Math. Surveys and Monographs, vol. 62, 1999

\bibitem[K1]{K1} M.Kontsevich,  Lectures at ENS, Paris, Spring 1998: notes taken by J.Bellaiche, J.-F.Dat, I.Marin, G.Racinet and H.Randriambololona, unpublished

\bibitem[K2]{K2} M.Kontsevich, Homological algebra of Mirror Symmetry, {\it Proceedings of the International Congress of Mathematicians}, Z\"{u}rich 1994, vol. I, Birkhauser 1995, 120-139

\bibitem[KL]{KL} G.M.Kelly, S.Lack, $V$-Cat is locally presentable or locally bounded if $V$ is so, {\it Theory and Applications of categories}, vol. 8, no. 23, 2001, 555-575

\bibitem[KS]{KS} M.Kontsevich, Y.Soibelman, Notes on $A_\infty$ algebras, $A_\infty$ categories, and non-commutative geometry, in: "Homological Mirror Symmetry: New Deevelopments and Perspectives" (A.Kapustin et al. (Eds.)), Lect. Notes in Physics 757 (Springer, Berlin Heidelberg 2009) 153-219

\bibitem[Li]{Li} F.E.J. Linton, Coequalizers in categories of algebras, in: Springer Lecture Notes in Mathematics 80, 75-90

\bibitem[Ly]{Ly} V.Lyubashenko, Homotopy unital $A_\infty$ algebras, J. Algebra 329 (2011) no. 1, 190-212

\bibitem[LyMa]{LyMa} V.Lyubashenko, O.Manzyuk, Unital $A_\infty$ categories, Problems of topology and related questions (V.V. Sharko, ed.), vol. 3, Proc. of Inst. of Mathematics NASU, Inst. of Mathematics, Nat. Acad. Sci. Ukraine, Kyiv, 2006, n. 3, 235-268

\bibitem[ML]{ML} S.Mac Lane, {\it Categories for the working mathematician}, 2nd ed., Springer GTM, 1998

\bibitem[MLM]{MLM} S.Mac Lane, I.Moerdijk, {\it Sheaves in Geometry and Logic: a first introduction to Topos Theory}, Springer, 1992

\bibitem[R]{R} E. Riehl, {\it Category Theory in context}, Dover Publ., 2016

\bibitem[Tab]{Tab} G.Tabuada, A Quillen model structure on the category of dg categories
C. R. Math. Acad. Sci. Paris, 340 (2005), 15-19

\bibitem[Wo]{Wo} H.Wolff, $V$-cat and $V$-graph,  {\it Journal of Pure and Applied Algebra}, {\bf 4}(1974), 123-135

}
\end{thebibliography}
\end{document}